%% file: main.tex
\documentclass{article}
\usepackage[a4paper, margin=3cm
 ]{geometry}
\usepackage[utf8]{inputenc}
\usepackage{mathpazo}
\usepackage[hidelinks]{hyperref}
\usepackage[anythingbreaks]{breakurl}
\usepackage{doi}
\usepackage{relsize}
\usepackage{fontawesome}

\usepackage[T1]{fontenc}
\usepackage{amssymb, amsmath}
\usepackage[dvipsnames]{xcolor}
\usepackage[english]{babel}
\usepackage{amsthm}
\usepackage{enumitem}
\usepackage{tikz}
\usetikzlibrary {arrows.meta}
\usepackage{bm}
\usepackage{csquotes}
\usepackage[shortcuts]{extdash}
\usepackage[capitalise]{cleveref}
\usepackage{xcolor}
\usepackage[colorinlistoftodos]{todonotes}

\usepackage[format=plain,
            labelfont=it,
            textfont=it, singlelinecheck=false]{caption}
\usepgflibrary{shapes.geometric}
\usepackage{float}
\allowdisplaybreaks

 \hypersetup{breaklinks=true, 
colorlinks=false,   
        }

\usepackage[maxnames=20, url=false, doi=false]{biblatex} 
\bibliography{main.bib}
\AtEveryBibitem{\clearfield{issn}}

\usepackage{subfiles}

\AtBeginBibliography{\small}
 
\newtheorem{theorem}{Theorem}[section]

\newtheorem{corollary}[theorem]{Corollary}

\newcommand{\ignore}[1]{}

\newtheorem{lemma}[theorem]{Lemma}
\newtheorem{prop}[theorem]{Proposition}
\newtheorem{fact}[theorem]{Fact}
\theoremstyle{remark}
\newtheorem{remark}[theorem]{Remark}
\newtheorem{notation}[theorem]{Notation}

\newtheorem{observation}[theorem]{Observation}

\theoremstyle{definition}

\newtheorem{examples}[theorem]{Examples}

\newtheorem{definition}[theorem]{Definition}

\newtheorem{conjecture}[theorem]{Conjecture}

\def\acts{\curvearrowright}

\input{commands}

\usepackage{orcidlink}

\title
{All mixed identities are singular\\ in groups with no algebraicity}

\author{Paolo Marimon\thanks{%
Department of Computer Science, University of Oxford. Oxford, United Kingdom.\\
Email: \href{mailto:paolo.marimon@cs.ox.ac.uk}{paolo.marimon@cs.ox.ac.uk}\\
ORCID: \href{https://orcid.org/0000-0001-9355-1685}{0000-0001-9355-1685} \orcidlink{0000-0001-9355-1685}
}\and Michael Pinsker\thanks{%
Institut f\"ur Diskrete Mathematik \& Geometrie. Technische Universit\"{a}t Wien, Vienna, Austria.\\
Email: \href{mailto:michael.pinsker@tuwien.ac.at}{michael.pinsker@tuwien.ac.at}\\
ORCID: \href{https://orcid.org/0000-0002-4727-918X}{0000-0002-4727-918X} \orcidlink{0000-0002-4727-918X}}}

\date{}

\begin{document}
\maketitle
\begin{abstract}
We show that if {a group}  admits an action with no algebraicity 
then all of its mixed identities are singular. Previously, such groups were only known to be lawless by a theorem of Ab\'{e}rt. 

Our  result 
confirms, in particular, a conjecture of Bodirsky, Schneider, and Thom for a large class of oligomorphic permutation groups. It thereby not only subsumes
numerous results from the literature 
in a simple uniform theorem, but also settles the question for prominent groups for which the conjecture was an open problem, such as the automorphism group of $(\mathbb{Q}; <)$. It also  applies outside the oligomorphic context, e.g.~to much-investigated groups such as  
 Thompson's groups $F, T$, and $V$, 
to Grigorchuk’s group, and to the homeomorphism groups of any manifold of dimension $\geq 1$. 


More generally, we prove that all mixed identities of {a group}  are singular as long as it has an action satisfying certain geometric conditions. This additionally covers, for example, the infinite-dimensional general and projective linear groups. 

\end{abstract}

\input{Sections/Introduction}

\input{Sections/Preliminaries}

\input{Sections/mixed}

\input{Sections/Maintheorem}
\input{Sections/Examples}
\input{Sections/Microsupported}

\textbf{Acknowledgment:} We would like to thank Luna Elliott, Itay Kaplan, and Oria Frenkel for helpful discussions. This work was supported by UKRI EP/X024431/1. Funded by the European Union (ERC, POCOCOP, 101071674). Views and opinions expressed are however those of the authors only and do not necessarily reflect those of the European Union or the European Research Council Executive Agency. Neither the European Union nor the granting authority can be held responsible for them. This research was funded in whole or in part by the Austrian Science Fund (FWF) [I 5948]. For the purpose of Open Access, the authors have applied a CC BY public copyright licence to any Author Accepted Manuscript (AAM) version arising from this submission.

 \printbibliography

\end{document}

%% file: commands.tex
 \definecolor{Oxblue}{cmyk}{0.96,0.75,0,0}   

 \definecolor{Oxred}{cmyk}{0.18, 1, 0.74, 0.08}

\definecolor{Oxsky}{cmyk}{0.34,0.06,0,0}

\definecolor{Oxyellow}{cmyk}{0.1, 0.23, 0.93, 0.01}
\definecolor{Oxlemon}{cmyk}{0.08, 0, 0.69, 0}
\definecolor{Oxvivid}{cmyk}{0.56, 0, 0.46, 0}

\definecolor{Oxlime}{cmyk}{0.54, 0, 1, 0}

\definecolor{Oxgray}{cmyk}{0.06,0.06,0.09,0}

\definecolor{TUblue}{cmyk}{1,0.38,0,0.15}
\definecolor{Oxmauve}{cmyk}{0.58, 0.6, 0.27, 0.1}

\definecolor{Oxcool}{cmyk}{0.15, 0, 0.08, 0}
\definecolor{Oxgreen}{cmyk}{0.79, 0.35, 0.64, 0.26}

\newcommand{\indep}[2]{%
  \mathrel{
    \mathop{
      \vcenter{
        \hbox{\oalign{\noalign{\kern-.3ex}\hfil$\vert$\rlap{$^\mathrm{#2}$}\hfil\cr
              \noalign{\kern-.7ex}
              $\smile$\cr\noalign{\kern-.3ex}}}
      }
    }\displaylimits_{#1}
  }
}

\newcommand{\notindep}[2]{%
  \mathrel{
    \mathop{
      \vcenter{
        \hbox{\oalign{\noalign{\kern-.3ex}\hfil$\not\vert$\rlap{$^\mathrm{#2}$}\hfil\cr
              \noalign{\kern-.7ex}
              $\ \smile$\cr\noalign{\kern-0.3ex}}}
      }
    }\displaylimits_{#1}
  }
}

\DeclareMathOperator{\rk}{rk}
\DeclareMathOperator{\cl}{cl}
\DeclareMathOperator{\pcl}{pcl}
\DeclareMathOperator{\acl}{acl}

\DeclareMathOperator{\Aut}{Aut}

\DeclareMathOperator{\Homeo}{Homeo}
\DeclareMathOperator{\Diff}{Diff}

\DeclareMathOperator{\Sym}{Sym}
\DeclareMathOperator{\GL}{GL}

\DeclareMathOperator{\PGL}{PGL}
\DeclareMathOperator{\Alt}{Alt}

\DeclareMathOperator{\ZZ}{Z}
\newcommand{\zar}{\tau_{\ZZ}}
\DeclareMathOperator{\ppww}{pw}
\newcommand{\pw}{\tau_{\ppww}}

\newcommand{\spOm}{\bm{\Omega}}

\newcommand{\fps}[1]{[#1]^{<\omega}}

\def\Ind#1#2{#1\setbox0=\hbox{$#1x$}\kern\wd0\hbox to 0pt{\hss$#1\mid$\hss}
	\lower.9\ht0\hbox to 0pt{\hss$#1\smile$\hss}\kern\wd0}
\def\ind{\mathop{\mathpalette\Ind{}}}
\def\Notind#1#2{#1\setbox0=\hbox{$#1x$}\kern\wd0\hbox to 0pt{\mathchardef
		\nn="3236\hss$#1\nn$\kern1.4\wd0\hss}\hbox to
	0pt{\hss$#1\mid$\hss}\lower.9\ht0
	\hbox to 0pt{\hss$#1\smile$\hss}\kern\wd0}
\def\nind{\mathop{\mathpalette\Notind{}}}

\newenvironment{subproof}[1][\proofname]{%
  \begin{proof}[#1]%
}{%
  \end{proof}%
}

 \newtheorem{thmx}{Theorem}

%% file: Sections/Introduction.tex
\section{Introduction}\label{sect:intro}

One of the very core problems of algebra is understanding the solutions to equations  over  algebraic structures; the powerful tools of Galois theory, for example, are among the most fundamental and prominent achievements 
 of this kind
in  the case of fields. Over a group $G$, equations can be written in  the form $w(x_1, \dots, x_r)=1$, where $w(x_1, \dots, x_r)$ is a word with constants from $G$ and variables $x_1, \dots, x_r$. 
On one hand, we 
wish to understand when $w$ is solvable in $G$, that is, when we can find $g_1, \dots, g_r\in G$ such that $w(g_1, \dots, g_r)=1$. On the other hand, we wonder 
when $w$ is a \textbf{mixed identity} of $G$ (called a \textbf{law} if it has no constants): that is, for all $g_1, \dots, g_r\in G$, $w(g_1, \dots, g_r)=1$. There is an obvious tension between solving equations and finding mixed identities: if $w$ is a mixed identity of $G$, then for any $\gamma\in G\setminus\{1\}$, $\gamma w$ is not solvable in $G$. 

A word is \textbf{singular} if, when ``forgetting'' its constants (by replacing them by $1$) and reducing,  one obtains the identity: for example, this is the case for the word $x_1^{-1}\gamma x_1$,  where $\gamma \in G$. We call non-singular words \textbf{regular}. Clearly, in the case of laws (without constants) only regular words are of interest.  Mixed identities, on the other hand,  do  occur commonly with singular words even in infinite groups, but seem to be rare with  regular words. This observation is underpinned by the fact that while there are singular words over groups $G$ which have no solution in any overgroup $H\supseteq G$~\cite[$\S$1]{klyachko2017new}, it is a major open problem whether this phenomenon might occur also for regular words~\cite[Conjecture 1.1]{klyachko2017new}: given a regular word with constants from a group $G$, can we, in analogy to fields, always find a solution to it in an adequate extension?
The one-variable case of this question is the famous Kervaire-Laudenbach Conjecture, answered positively for finite groups~\cite{gerstenhaber1962solution} and for hyperlinear groups~\cite{pestov2008hyperlinear}.


In order to address the problem of equation solvability, it is of obvious  importance to isolate general conditions under which regular word maps (Definition~\ref{def:word}) have large, ideally surjective, image;  dually, 
this calls for versatile techniques and 
 general
assumptions that allow us to derive the absence of mixed identities in a group. 
Mixed identities have been studied intensively over the past 50 years, starting with the work of Soviet mathematicians~\cite{anashin1978mixed, tomanov1985generalized, golubchik1982generalized}, and with substantial progress in the last decade~\cite{hull2016transitivity, bradford2023non, bradford2025length, etedadialiabadi2021dense, ivanov2025mixed}. Some of the most impressive results on mixed identities provide  
conditions under which a given class of groups is \textbf{mixed-identity-free (MIF)} (i.e., with no non-trivial mixed identities), often with a dichotomy theorem drawing 
a dividing line between groups in a given class which are MIF and those which are not~\cite{tomanov1985generalized, hull2016transitivity, bradford2023non}. For example, the 
Hull-Osin dichotomy~\cite{hull2016transitivity} shows that a highly transitive countable group is MIF if and only if it does not have a normal subgroup isomorphic to $\Alt(\mathbb{N})$. However, the mixed identities implied by the presence of $\Alt(\mathbb{N})$ are all singular (cf.~\cite{bodirsky2024mixed} and Theorem~\ref{mainthm:noalg}). This invites us again to 
develop general techniques to prove that all mixed identities are singular.

Recently, Bodirsky, Schneider, and Thom~\cite{bodirsky2024mixed} 
{proposed}
a natural richness condition for a group under which they conjecture that all mixed identities are singular: a {group action}
$G\acts\Omega$, where $\Omega$ is a countably infinite set, is \textbf{oligomorphic} if the diagonal action $G\acts \Omega^n$ where $g\cdot (a_1, \dots, a_n)=(g a_1, \dots, g a_n)$ has finitely many orbits for all $n\geq 1$~\cite{cameron2009oligomorphic}. Examples of oligomorphic permutation groups are $\Sym(\mathbb{N})$, all countable highly transitive groups, the automorphism group of the rational numbers with their order $\Aut(\mathbb{Q}; <)$, general linear groups of countably infinite vector spaces  
over finite fields~\cite{evans1991small}, and the automorphism groups of homogeneous structures in a finite relational language~\cite{homogeneous}. These groups are of special interest to model theory since they appear as automorphism groups of countable $\omega$-categorical structures. 
We know from~\cite{macpherson1986groups} that oligomorphic permutation groups are lawless (i.e., with no non-trivial law), and in various important contexts we have general 
conditions
 under which word maps without constants are surjective~\cite{adeleke1994representation, mycielski1987representations, lyndon1990words, maroli1990representation}. {In recent years, mixed identities in oligomorphic permutation groups have} received considerable attention~\cite{hull2016transitivity, etedadialiabadi2021dense, ghadernezhad2019group, bradford2023non, bodirsky2024mixed} with several cases 
of such groups being shown to be MIF, or such that all of their mixed identities are singular. This  prompted~\cite{bodirsky2024mixed} to conjecture:
\begin{conjecture}[{Conjecture 1 in~\cite{bodirsky2024mixed}}]\label{conj1} Let $G$ be a group with an oligomorphic action $G\acts \Omega$. Then all mixed identities of $G$ are singular.
\end{conjecture}
Despite the abundance of examples, there are substantial challenges to Conjecture~\ref{conj1}: existing techniques to show that a given word is not a mixed identity usually rely on 
the constants that might appear in a word satisfying particular tameness conditions which are often 
so strong that they imply MIF (cf.~\cite{bodirsky2024mixed}); {but many oligomorphic permutation groups,} {including $\Sym(\mathbb{N})$ and $\mathrm{Aut}(\mathbb{Q}; <)$,  are known not to be MIF}. In particular, in spite of considerable work being dedicated to it in~\cite{bodirsky2024mixed}, the conjecture was hitherto open even for the fundamental case of $\Aut(\mathbb{Q}; <)$.

In the present paper, we find one single, simple, {and 
general} richness condition on a group action that implies that all mixed identities of the group are indeed singular, and prove this by methods that are uniform in the sense that they do not depend on an analysis of the possible behaviour of constants (which at this level of generality 
{seems unfeasible}). 
To our knowledge, there has been no prior result of comparable scope ruling {out}
regular mixed identities. 
It not only confirms Conjecture~\ref{conj1} for a large class of oligomorphic permutation groups including $\Aut(\mathbb{Q};<)$, but also applies to numerous  geometric and topological examples beyond the oligomorphic context. We say that a 
{group action} $G\acts\Omega$ has \textbf{no algebraicity} (or {$G\acts\Omega$} 
\textbf{separates}  
$\Omega$ in the topology literature~\cite{abert2005group, ivanov2025mixed})
if for any finite $B\subseteq\Omega$, every $a\in\Omega\setminus B$ has an infinite $G_B$-orbit, where $G_B$ is the pointwise  stabiliser of $B$. 

\begin{thmx}[Corollary~\ref{cor:main}]\label{mainthm:noalg} {Let $G$ be a group, and let $w$ be a regular word  with constants in $G$. If $G$ has a group action
$G\acts \Omega$  
with no algebraicity, then  the set} 
\[\{(g_1, \dots, g_r)\in G^r\ \vert \ w(g_1, \dots, g_r)\neq 1\}\]
is 
 {dense} in  $G^r$ with respect to the topology of pointwise convergence {induced by the action on the discrete set $\Omega$}. 
In particular, all mixed identities of $G$ are 
then 
singular.
\end{thmx}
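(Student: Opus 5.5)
The plan is to prove the density statement directly; the ``in particular'' clause then follows, since a dense set is nonempty. Write
\[
w=c_0x_{i_1}^{\epsilon_1}c_1x_{i_2}^{\epsilon_2}\cdots x_{i_n}^{\epsilon_n}c_n ,
\]
with $c_k\in G$ and $\epsilon_k=\pm1$. A basic open neighbourhood of a tuple $(h_1,\dots,h_r)$ is given by a finite set $A\subseteq\Omega$: it consists of all $(g_1,\dots,g_r)$ with $g_j{\restriction}A=h_j{\restriction}A$. I will look for $g_j$ of the form $g_j=h_ju_j$ with $u_j\in G_A$, and show that some choice makes $w(g_1,\dots,g_r)$ move a suitably chosen point $b$. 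The basic combinatorial tool is the standard consequence of no algebraicity (Neumann's lemma applied to $G_B$ acting on $\Omega\setminus B$, where every orbit is infinite): for finite $B,X,Y\subseteq\Omega$ with $X\cap B=\emptyset$, there is $u\in G_B$ with $uX\cap Y=\emptyset$. In words, we can always push finitely many points off any finite set while fixing a prescribed finite set.

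The construction follows the trajectory of a point $b$ under $w$, read from right to left. Set $p_n=c_nb$, and let $p_{k-1}$ be the image of $p_k$ under $x_{i_k}^{\epsilon_k}$ followed by $c_{k-1}$. I build the $u_j$ by successive approximation, that is, as finite partial maps that are later realised by group elements. The freedom at each stage comes from composing with an element of the pointwise stabiliser of everything already committed. When the trajectory reaches a point at which the relevant $g_{i_k}^{\pm1}$ is not yet determined, I choose its image to be \emph{fresh}: outside the finite set $F$ of all points seen so far, together with $A$, the $h$-images of $A$, and the finitely many $c$-translates of these. Because the action has no algebraicity, such a choice can be made by an element of $G_F$, so earlier commitments are preserved. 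At the start, $b$ itself is chosen fresh with respect to $A$ and the finitely many constants.

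Applying the same $x_j$ to an already visited point is forced, and this is where singular subwords such as $x\,c\,x^{-1}$ come into play. If the fresh point $z$ satisfies $cz=z$, the trajectory returns, so the subword behaves like a cancellation; if $cz\neq z$, the trajectory goes on to a new point. I would therefore keep track of the trajectory through the constant-free reduction of $w$. The inductive claim is: after processing the suffix of $w$ that reduces (constants deleted) to the letters $y_1\cdots y_m$, the current point is fresh relative to everything except a ``stack'' recording that reduced suffix. Moreover, it returns to an earlier point only when the next letter cancels the top of the stack. By regularity, the full reduction $y_1\cdots y_m$ is nonempty. So at the end the point $w(g_1,\dots,g_r)b$ is either fresh relative to $b$, or lies at a position reached by a nonempty reduced path. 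In both cases it is distinct from $b$, using the freshness of $b$ and of the intermediate choices.

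The main obstacle is exactly this bookkeeping of singular segments. The constants are arbitrary, so a constant may or may not fix a fresh point, and a segment that does not cancel in the reduced word may still bring the trajectory back to some previously visited point. To exclude such accidental coincidences, I would first try to strengthen freshness. Each newly chosen point should avoid the orbit, under the finitely many group elements built from the constants and the partial maps defined so far, of all previously seen points. This is a finite set, so no algebraicity still suffices. With this strengthening, the only way the trajectory can revisit a point is through a genuine cancellation in the free reduction, and this would make the inductive claim above go through.
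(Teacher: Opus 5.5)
Your overall strategy is the right one: follow the trajectory, build the $u_j$ as finite partial maps, and pick fresh points in infinite stabiliser orbits while avoiding constant-translates. It is what the paper's proof becomes in this case. Under no algebraicity, $\acl$ is a degenerate pregeometry in which independence is just disjointness, and higher Neumann reduces to the infiniteness of $G_F$-orbits of points outside $F$. So Lemmas~\ref{lem:basecase} and~\ref{lem:hardinduction} become exactly a "fresh choices with lookahead" construction.

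The gap is the inductive invariant, and the whole proof lives there. The invariant you state (the trajectory revisits a point only when the next letter cancels the top of the stack) is false, and no freshness condition can enforce it. Take $G=\Sym(\mathbb N)$, $c=(1\,2)$, no initial condition, and the regular word $x^{-1}cxcx^{-1}$, so $w(g)=g^{-1}cgcg^{-1}$. Start at $b\notin\{1,2\}$ and let $z=g^{-1}(b)$ be a fresh choice with $z\notin\{1,2\}$; nothing in your conditions excludes this, and it is the generic case. Then:
\begin{itemize}
\item $cz=z$, so $g(z)=b$ is forced;
\item $cb=b$;
\item the last letter $x^{-1}$, which \emph{pushes} onto the now empty stack, is forced back to the earlier point $z$.
\end{itemize}
More fundamentally, singular segments can be genuine mixed identities (e.g.\ $(cxcx^{-1})^6$ in $\Sym(\mathbb N)$). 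So arbitrary coincidences among positions of the \emph{same} freeness rank are unavoidable and need not occur at cancellations. Conversely, a "stack" invariant loose enough to allow such coincidences does not justify your final step, that a point reached by a nonempty reduced path differs from $b$.

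What is needed is an invariant indexed by freeness rank and imposed on lookahead translates, not only on the points. For a position $j$ with value $a$, set $F(a,j)=\mathfrak P(w,j)\cdot a$, where $\mathfrak P(w,j)$ is the set of products of \emph{distinct} constants $\gamma_i^{\pm1}$ with $i\ge j$. Require two things:
\begin{itemize}
\item every $F(a,j)$ avoids $A$;
\item $F(a,j)\cap F(a',k)=\emptyset$ whenever the two positions have different freeness rank, with nothing required for equal ranks.
\end{itemize}
This is rank-cleanness (Definition~\ref{def:rankclean}) for $\acl$.

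Preserving it needs exactly the argument you skip, with two cases.
\begin{itemize}
\item \emph{Forced case.} If the current point already lies in the domain of the partial map, the invariant shows it cannot be in $A$ and must equal an earlier domain point of the \emph{same} rank. So the forced image is an earlier image of that same rank. Since $\mathfrak P(w,\ell+1)\gamma_\ell\subseteq\mathfrak P(w,\ell)\subseteq\mathfrak P(w,j)$ for $j<\ell$, all its future translates stay inside an already controlled set (Claims~1.1--1.3 in the proof of Lemma~\ref{lem:hardinduction}).
\item \emph{Free case.} Otherwise, choose the new point outside $\mathfrak P(w,\ell)^{-1}$ applied to $A$ and all earlier $F(\cdot,\cdot)$; no algebraicity permits this.
\end{itemize}
Taking the lookahead set to be products of distinct \emph{later} constants is what makes the forced case close up. Your "finitely many group elements built from the constants and the partial maps" is not tied to any such invariant, and "this would make the inductive claim above go through" is precisely where the argument is missing. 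With the correct invariant, regularity gives $\rk(w)\neq 1=\rk(b)$, hence $w(g_1,\dots,g_r)b\neq b$.
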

We remark that previously, it was {observed by} 
Ab\'{e}rt~\cite{abert2005group} that permutation groups with no algebraicity are lawless.
Theorem~\ref{mainthm:noalg} can therefore be viewed as a vast generalization of that result:  the presence of constants in mixed identities  implies the possibility of non-trivial singular subwords, thereby leading to a major leap in the complexity of the problem.

The property of no algebraicity implies that $\Omega$ and $G$ are infinite, but does not bound the cardinality of either. Note also that if $G\acts\Omega$ has no algebraicity, then so do all of its dense {(with respect to pointwise convergence induced on $G$ by the action)} subgroups, and all permutation groups $H$ such that $G\leq H\leq \Sym(\Omega)$ {(if we view $G$ as a permutation group via the action)}. There is a plethora of interesting groups arising in logic, topology, and geometric group theory {which act naturally} with no algebraicity, 
 which we list below. We know that several of these examples have (singular) mixed identities.

\begin{itemize}
\item The automorphism groups of  \textbf{homogeneous structures} in a relational language whose class of finite substructures has the strong amalgamation property  (this is known to be equivalent to no algebraicity~\cite{homogeneous}). If the  relational language is finite, these automorphism groups are oligomorphic, but we do not require this. Such groups include:
\begin{itemize}
\item the majority of examples whose mixed identities are investigated in~\cite{bodirsky2024mixed, ghadernezhad2019group, etedadialiabadi2021dense}, including the automorphism groups of: $(\mathbb{N}; =)$, {all} homogeneous structures with free amalgamation (such as the Rado graph and the generic triangle-free graph), the random poset, the random permutation, and the Urysohn space. With the exception of $\Sym(\mathbb{N})=\Aut(\mathbb{N}; =)$ and non-transitive homogeneous structures with free amalgamation, the above examples are MIF. However, previous techniques cannot prove that Conjecture~\ref{conj1} also holds for
the automorphism groups of the reducts of such structures (i.e., closed supergroups of their automorphism groups), whilst this follows immediately from our result;
\item the automorphism groups of the following structures, where the presence of intervals or tree-like structure was an obstruction to previous techniques:\footnote{All of these structures are described in Examples 2.3.1 and 6.1.2 of~\cite{homogeneous}.} $(\mathbb{Q}; <)$ and its reducts~\cite{cameron1976transitivity}, the homogeneous local order $S(2)$~\cite{cameron1981orbits}, Droste's $2$-homogeneous semilinear orders of negative type~\cite{droste1985structure}, and homogeneous $C$-relations~\cite{adeleke1998relations}. We know that the first three examples have mixed identities (the first by~\cite{bodirsky2024mixed}, and the second two by~\cite{ghadernezhad2019group} and 
Proposition~\ref{prop:elliott}).

\end{itemize}
\item Any highly transitive group (cf.~\cite{hull2016transitivity}), or any highly order-transitive group (cf.~\cite{glass1991highly}).
\item Any group with a \textbf{Rubin action} on an infinite set in the sense of{~\cite[p.158]{belk2025short} (cf.~\cite{rubin1989reconstruction, kim2021structure}).} 

This includes:
\begin{itemize}
\item Thompson's groups $F, T,$ and $V$, 
in their actions, respectively, on $(0,1), S^1,$ and $2^\omega$ (see~\cite{cannon1996introductory}). 
We know from~\cite{brin1985groups} that Thompson's group $F$ is lawless, and from~\cite{zarzycki2010limits} that it has singular mixed identities; its mixed identities were also studied in~\cite{slanina2019laws, ivanov2025mixed}. Indeed, Ab\'{e}rt's proof of lawlessness  for groups with no algebraicity~\cite{abert2005group} is the starting point for the notion of hereditarily separating action that is central to~\cite{ivanov2025mixed}.  Thompson's group $T$ also has mixed identities (by Proposition~\ref{prop:elliott} and~\cite[Theorem 1.2]{elliott2026zariskitopologyhomeomorphismgroups}). The conclusion of Theorem~\ref{mainthm:noalg} for Thompson's groups $F$ and $T$ is a new result. For Thompson's group $V$, one can recover that it is MIF from the proof of~\cite[Theorem 1.3]{elliott2026zariskitopologyhomeomorphismgroups};

\item the homeomorphism groups of the
following spaces: the Cantor set, the Hilbert cube, or any manifold of dimension at least $1$. Again, this result is new for these homeomorphism groups, with the exception of manifolds of dimension $\geq 2$, where~\cite{elliott2026zariskitopologyhomeomorphismgroups} can be seen as showing they are MIF. In particular, our result is new for the homeomorphism groups of the $1$-dimensional manifolds {$\mathbb{R}$ and $S^1$},   
both of which have mixed identities, as one can deduce from~\cite{elliott2026zariskitopologyhomeomorphismgroups} 
and Proposition~\ref{prop:elliott};  
\item any group $G$ such that $\Diff_c^p(X)_0\leq G\leq \Diff^p(X)$, where $X$ is a smooth connected boundaryless manifold and $p\in \mathbb{N}\cup\{\infty\}$. Here,  $\Diff^p(X)$ is the group of $C^p$ diffeomorphisms of $X$ for $p\in\mathbb{N}$ (or the intersection of all such groups if $p=\infty$), and $\Diff_c^p(X)_0$ is the subgroup of compactly supported diffeomorphisms of $X$ that are isotopic to the identity by a compactly supported isotopy (cf.~\cite[Chapter 6, $\S$ 6]{kim2021structure}). The conclusion of Theorem~\ref{mainthm:noalg} is new for such groups.
\end{itemize}
\item Any weakly branch group in its action on the boundary of its associated tree~\cite{grigorchuk2011some} (cf.~\cite{abert2005group, ivanov2025mixed}). This includes Grigorchuk’s first group in its action on the Cantor space $2^\omega$. The latter 
was given in~\cite{jacobson2021mixed} as an example of a lawless amenable group which is not MIF. Again, the conclusion of Theorem~\ref{mainthm:noalg} is new for these groups. 
\item {Any group acting as a subgroup of {the homeomorphism group of $S^1$}  
 by a proximal action in the sense of~\cite[p.895]{le2022triple}. We know that many such groups are not MIF~\cite[Proposition 4.7]{le2022triple}.}
\item The groups $\Homeo_+[0,1]$, in its action on $(0,1)$, and $\Homeo_+(S^1)$ of orientation-preserving homeomorphisms of the unit interval and of the circle. The question of whether all mixed identities are singular for $\Homeo_+[0,1]$ {was} 
posed as an open problem at the end of~\cite{bodirsky2024mixed}. 
\item The wreath product $G\wr H$ of any two groups $G\acts\Omega$, $H\acts\Omega'$ where $G\acts\Omega$ has no algebraicity. Similarly, any direct product $G\times H$ for which either $G$ or $H$ has an action with no algebraicity. We know from~\cite{jacobson2021mixed} that any wreath product  or direct product of non-trivial groups satisfies a singular mixed identity.
\item Any infinite iterated wreath product of non-trivial finite permutation groups (cf.~\cite{bhattacharjee1995ubiquity, abert2005group}).
\end{itemize}
Our main result in its fullest generality,  Theorem~\ref{theorem:main}, implies Theorem~\ref{mainthm:noalg} and has substantially more general hypotheses: namely  {the existence of a $G$-invariant closure operator which forms}
a \textbf{pregeometry} (also known as a matroid) which is \textbf{modular}, and which satisfies a certain generalisation of $\Pi$.~M.~Neumann's lemma~\cite{neumann1976structure} that we call \textbf{higher Neumann}. {This allows us to deduce that all mixed identities are singular for $\GL(\kappa, K)$ and $\PGL(\kappa, K)$, where $\kappa$ is an infinite cardinal and $K$ is a field. {The case of vector spaces has already been studied in~\cite{golubchik1982generalized} and~\cite{bradford2023non}, who show among other things
that all mixed identities are singular for, respectively, $\GL(n, K)$ where $K$ is infinite, and $\GL(\aleph_0, \mathbb{F}_q)$.}
{However, our proof, as it is not tailored to specific groups, uses substantially different methods.} 
Under these more general assumptions we can also deduce that all mixed identities are singular for the automorphism group of any homogeneous graph (these are classified by~\cite{lachlan1980countable}). 

{Several of the topological examples to which Theorem~\ref{theorem:main} applies are groups with micro-supported actions~\cite{caprace2017locally} (sometimes known as locally moving groups~\cite{rubin1996locally}). These are groups with an action on a Hausdorff space where for any open set $V$ we can find some group element stabilising the complement of $V$, but not $V$ (Definition~\ref{def:microsupported}). Micro-supported actions are intensively studied in topological dynamics~\cite{nekrashevych2022groups, caprace2017locally, francoeur2021stabilisers}, starting from Rubin's work~\cite{rubin1989reconstruction, rubin1996locally, mccleary2005locally, ben2010reconstruction}. In fact, Rubin actions are micro-supported (by definition), though in general micro-supported actions may have algebraicity. Groups with micro-supported actions are known to be lawless by~\cite{nekrashevych2022groups}. We lift {this} result to   prove that all of their mixed identities are singular, thereby complementing our main theorem:}
\begin{thmx}[{Theorem~\ref{thm:micro-supported-mixed-identities}}]\label{mainthm:micro} 
Let $G$ be a group and let $w$ be a regular word with constants in $G$. If $G$ has a micro-supported action $G\acts \Omega$ on a Hausdorff topological space $\spOm$, then the set
\[
  \{(g_1,\ldots,g_r)\in G^r\ | \ w(g_1,\ldots,g_r)\neq 1\}
\]
is dense in $G^r$ with the topology of pointwise convergence induced by the action {on $\Omega$ considered as a discrete set.} 
In particular, every mixed identity of $G$ is then  singular.
\end{thmx}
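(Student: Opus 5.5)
The plan is to prove density directly, by perturbing any prescribed tuple by elements supported in tiny open sets. Since $\Omega$ has the discrete topology for the pointwise convergence topology, a basic open neighbourhood of $(h_1,\dots,h_r)\in G^r$ is determined by a finite set $F\subseteq\Omega$. It consists of the tuples $(g_1,\dots,g_r)$ with $g_j{\restriction}F=h_j{\restriction}F$. Substituting $x_j\mapsto h_jx_j$ turns $w$ into another word $w'$ with constants in $G$, and $w'$ is still regular, because forgetting constants is unaffected by inserting constants. So it suffices to find $u_1,\dots,u_r$ fixing $F$ pointwise with $w'(u_1,\dots,u_r)\ne 1$. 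After conjugating and cyclically reducing, I would write
\[
w'=c_0x_{i_1}^{\epsilon_1}c_1x_{i_2}^{\epsilon_2}\cdots x_{i_n}^{\epsilon_n}c_n .
\]
Since it is regular, $n\geq 1$ and the variable part $x_{i_1}^{\epsilon_1}\cdots x_{i_n}^{\epsilon_n}$ does not freely reduce to $1$. I would not, however, reduce away constants: the singular subwords $x c x^{-1}$ with $c\ne 1$ are exactly where the difficulty lies.

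The key tool is a topological lemma extracted from micro-supportedness plus Hausdorffness. For every nonempty open $V\subseteq\spOm$, there is $g\in G$ supported in $V$, meaning $g$ fixes $\Omega\setminus V$ pointwise, together with a point $p\in V$ such that $gp\ne p$. Using Hausdorffness, shrink to an open $V'\ni p$ with $gV'\cap V'=\emptyset$. Iterating this inside smaller and smaller open sets gives, for any nonempty open set and any finite set of \enquote{forbidden} points, many elements supported in pairwise disjoint small open sets avoiding the forbidden points. Each of them moves a chosen point to a new point in its support.

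With this lemma, I would run an Abért-style trajectory argument, reading $w'$ from the right. Fix a point $y_0$ and maintain a trajectory
\[
y_0,\; c_ny_0,\; x_{i_n}^{\epsilon_n}c_ny_0,\;\dots
\]
At each occurrence of a variable, the current point should be \enquote{fresh}: it lies outside $F$, outside the finitely many points already used, and outside the images of these under the relevant constants. If the current variable $x_{i_k}^{\pm1}$ has not yet been defined at the current point, choose a new element supported in a tiny neighbourhood of that point. It should send the point to a new point, while its support avoids $F$ and all previously constructed supports and trajectory points. Then define $u_{i_k}$ as the product of all such pieces assigned to it. These pieces have disjoint supports, so they commute and can be analysed independently. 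The goal is that the final point $w'(\bar u)y_0$ differs from $y_0$.

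The main obstacle is occurrences where the current point returns into a support already used. This is forced, for instance, by a subword $x_ju\,x_j^{-1}$ whose middle part acts trivially on the point. Then the next letter's action is already determined and may undo the progress. I would handle this first by choosing $y_0$ and all supports generically. Constants are fixed, so they send points of a small neighbourhood into a small neighbourhood, and with everything small enough the trajectory returns to an old support only when the corresponding variable subword freely cancels. Regularity of the variable part then guarantees that not everything cancels, so some letter acts on a fresh point, and the final point is new. If the returns cannot be controlled this cleanly, the fallback is induction on $n$: peel off a maximal singular subword, replace it by a constant on a small open set where the perturbations are the identity, and apply the inductive hypothesis to the shorter regular word.
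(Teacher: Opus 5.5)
There is a genuine gap, and it sits exactly where you locate the main obstacle. Your key claim is that, with generic choices, the trajectory re-enters an existing support only when the intervening variable subword freely cancels, so regularity forces a new final point. This claim is false. Constants, or products of consecutive constants, can act as the identity near the trajectory; a non-critical constant can literally be $1$. Then no choice of $y_0$ helps.

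The simplest case is $w=x^2$. You build $s$ supported in a small neighbourhood $W$ of $y_0$ with $sy_0=y_1\neq y_0$. Since $s$ fixes $\Omega\setminus W$ pointwise, $s(W)=W$, so $y_1\in W$ and the second letter is already determined. The final point is $s^2y_0$, which equals $y_0$ whenever $s$ is an involution, and nothing in your construction rules that out. The same happens for $x^m$, for $x\,c\,x$ with $c$ trivial on an open set, and in general whenever the constants cancel along the trajectory. The word then acts locally as its constant-free shadow $\rk(w)$ evaluated at your pieces. So \enquote{some letter acts on a fresh point} does not stop later letters from bringing the point back. For the same reason you cannot keep new supports disjoint from old ones: the current point lies in the support just built.

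To control this you need elements supported in a small open set that satisfy no law. That is Nekrashevych's lawlessness theorem (Theorem~\ref{thm:microlawless}), applied to $G_{\Omega\setminus V}\acts V$ via Fact~\ref{fact:local-lawlessness}. Your argument neither invokes nor reproves it. Your fallback induction does not fill the hole either. The inductive hypothesis gives some $\bar u$ with $w''(\bar u)\neq 1$, but no control over where the $u_i$ are trivial or where the witnessing trajectory runs. So you cannot guarantee that the excised singular subword acts as its constant at the relevant points.

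The paper avoids pointwise bookkeeping altogether:
\begin{enumerate}
\item Put $\xi_k=\gamma_k\cdots\gamma_0$. Lemma~\ref{lem:finite-germ-separation} gives one non-empty open $V$, disjoint from $\bigcup_k\xi_k^{-1}(B)$, on which any two $\xi_j,\xi_k$ either agree or have disjoint images.
\item By lawlessness, take $h_1,\dots,h_r\in G_{\Omega\setminus V}$ with $\rk(w)(h_1,\dots,h_r)\neq 1$.
\item Let $p_i$ be the product of the commuting conjugates $\xi_jh_i\xi_j^{-1}$, over representatives $j$ of the distinct cells $\xi_j(V)$.
\item Then $\xi_k^{-1}p_i^{\pm1}\xi_k$ agrees with $h_i^{\pm1}$ on $V$, so $w(\bar p)(c)=\xi_n\,\rk(w)(\bar h)(c)$ for every $c\in V$.
\item The dichotomy for $\xi_n$ versus $1$ on $V$ finishes the proof.
\end{enumerate}
Transplanting the same witness into every cell makes returns through cancelling constants harmless, and lawlessness handles what happens inside a cell. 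These are exactly the two things your freshness argument cannot control.
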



Theorems~\ref{mainthm:noalg} and~\ref{mainthm:micro} also 
have intriguing interactions with the study of intrinsic topologies on groups~\cite{markov1946unconditionally, bryant1977verbal}. In particular, recently there has been substantial activity in describing the behaviour of  Zariski topologies, which are defined in terms of solution sets of disequalities of mixed group or semigroup words (sometimes with restrictions on the allowed words)~\cite{bryant1977verbal, banakh2012algebraically, bardyla2025note, ghadernezhad2019group, pinsker2026zariski, elliott2023automatic,elliott2026zariskitopologyhomeomorphismgroups, gonzalez2026minimal},  with several results focusing precisely on {permutation} groups with no  algebraicity. Indeed, our original motivation to study mixed identities in~\cite{gonzalez2026minimal} arises from this context. More precisely, on a group $G$, the \textbf{group Zariski topology} $\zar$ has a subbasis 
consisting of 
open sets of the form 
\[\{g\in G\ \vert \ w(g)\neq 1\}\;,\]
  where $w$ is a word in a single variable with constants from $G$. Elliott recently observed the following connection between the group Zariski topology and mixed identities, which is a straightforward consequence of~\cite[Lemma 3.1]{elliott2026zariskitopologyhomeomorphismgroups} and~\cite[Proposition 6.18 (i)]{etedadialiabadi2021dense}:
  \begin{prop}[\cite{elliottprivate}]\label{prop:elliott} Let $G$ be a MIF group. Then, the group Zariski topology $\zar$ is \textbf{irreducible}: every non-empty open set is dense. 
  \end{prop}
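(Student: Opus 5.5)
The plan is to show directly that if $U,V\subseteq G$ are non-empty $\zar$-open sets, then $U\cap V\neq\emptyset$. Since finite intersections of subbasic sets form a basis, it suffices to treat basic open sets
\[U=\{g\in G : u_1(g)\neq 1,\dots,u_m(g)\neq 1\},\qquad V=\{g\in G : v_1(g)\neq 1,\dots,v_n(g)\neq 1\},\]
where the $u_i,v_j$ are words in one variable $x$ with constants from $G$. Fix witnesses $a\in U$ and $b\in V$. The aim is then to produce a single one-variable word $w(x)$ with constants in $G$ such that $w(g)\neq 1$ forces $u_i(g)\neq1$ for all $i$ and $v_j(g)\neq1$ for all $j$, and such that $w$ is not a mixed identity. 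Since $G$ is MIF, it is then enough to check that $w$ is a non-trivial element of $G*\langle x\rangle$.

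The key tool is the standard commutator trick from \cite[Lemma 3.1]{elliott2026zariskitopologyhomeomorphismgroups} and \cite[Proposition 6.18 (i)]{etedadialiabadi2021dense}: in a MIF group, given finitely many non-trivial elements $w_1,\dots,w_k\in G*\langle x\rangle$ that are not mixed identities, one can find constants $c_2,\dots,c_k\in G$ such that the iterated commutator
\[w=[\dots[[w_1,c_2w_2c_2^{-1}],c_3w_3c_3^{-1}],\dots,c_kw_kc_k^{-1}]\]
is non-trivial in $G*\langle x\rangle$. Whenever some $w_i(g)=1$, the commutator evaluates to $1$ at $g$, so $\{g : w(g)\neq 1\}\subseteq\bigcap_i\{g : w_i(g)\neq 1\}$. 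I would apply this with $w_1,\dots,w_k$ being the list $u_1,\dots,u_m,v_1,\dots,v_n$. Each of these is a non-trivial element of $G*\langle x\rangle$, because $u_i(a)\neq1$ and $v_j(b)\neq1$. Since $G$ is MIF, $w$ is then not a mixed identity, so some $g\in G$ has $w(g)\neq1$, and this $g$ lies in $U\cap V$.

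The main obstacle is choosing the conjugating constants $c_i$ so that the commutator does not collapse in the free product. For example, $w_1$ and $c_2w_2c_2^{-1}$ might commute in $G*\langle x\rangle$. The first thing I would try is to invoke \cite[Proposition 6.18 (i)]{etedadialiabadi2021dense} as a black box, since it is exactly the statement that for MIF groups, finite intersections of non-empty subbasic sets are non-empty. If I had to argue directly, I would use the fact that in a free product two elements commute only if they lie in a common cyclic subgroup or are conjugate into a factor. A MIF group is non-trivial and has no non-trivial centre-like obstructions; in fact it is infinite and centreless, since any central $z\neq1$ would give the mixed identity $[x,z]$. Hence one can choose $c_i$ outside the finitely many cosets that would make $c_iw_ic_i^{-1}$ commute with the previous commutator. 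Non-emptiness of the intersection of non-empty open sets is exactly irreducibility, which completes the plan.
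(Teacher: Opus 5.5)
Your reduction is sound. It is also the natural content of the paper's justification, which consists only of citing Lemma~3.1 of \cite{elliott2026zariskitopologyhomeomorphismgroups} and Proposition~6.18~(i) of \cite{etedadialiabadi2021dense}. Irreducibility amounts to every finite intersection of non-empty subbasic sets being non-empty; one merges finitely many non-trivial one-variable words into a single non-trivial word via conjugated commutators and then applies MIF. If that proposition says what you take it to say and you invoke it as a black box, you are done.

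The gap is in your direct justification of the step you yourself flag as the main obstacle. The claim that the bad $c_i$ lie in finitely many cosets does not follow from the commutation theorem for free products plus the facts that $G$ is infinite and centreless. Suppose $W$ and $c\,w\,c^{-1}$ lie in the same conjugate of the factor $G$, for instance for constant words $w_1=\gamma$, $w_2=\delta$, which are legitimate subbasic words. Then commuting in $G*\langle x\rangle$ is just commuting in $G$, and the bad set $\{c\in G : [\gamma,c\delta c^{-1}]=1\}$ is dictated by the internal structure of $G$, not by normal forms. Being infinite and centreless does not control it. In the lamplighter group $\mathbb{Z}\wr\mathbb{Z}$, which is infinite and centreless, any non-trivial $\gamma,\delta$ in the abelian base group make every $c$ bad. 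What rules this out is MIF itself, which your argument does not use at this point.

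The cleanest repair is to let MIF choose the constants by introducing fresh variables. Put $H_1:=G*\langle x\rangle$ and $H_i:=H_{i-1}*\langle y_i\rangle$. If $W\in H_{i-1}$ and $w_i\in H_1$ are non-trivial, then $W\,y_i\,w_i\,y_i^{-1}\,W^{-1}\,y_i\,w_i^{-1}\,y_i^{-1}$ is an alternating word in $H_{i-1}*\langle y_i\rangle$ with all syllables non-trivial. So $[W,y_iw_iy_i^{-1}]\neq 1$ by the normal form theorem. Inductively, $\widetilde w:=[\dots[w_1,y_2w_2y_2^{-1}],\dots,y_kw_ky_k^{-1}]$ is a non-trivial element of $H_k=G*F(x,y_2,\dots,y_k)$, where $F(x,y_2,\dots,y_k)$ is the free group on these letters. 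Since $G$ is MIF in any number of variables, there is $(g,c_2,\dots,c_k)\in G^k$ with $\widetilde w(g,c_2,\dots,c_k)\neq 1$. Hence $w_i(g)\neq 1$ for every $i$, i.e.\ $g\in U\cap V$, and no separate choice of one-variable constants is needed.

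Alternatively, first discard every $w_i$ that is conjugate into $G$, since its subbasic set is all of $G$. For the remaining words, a careful version of your free-product analysis does show that only finitely many $c$ are bad at each step, so infiniteness of $G$ suffices. That analysis uses malnormality of the factors, infinite cyclic centralisers of elements not conjugate into a factor, and conjugation-invariance of cyclically reduced length. But it has to be carried out, not asserted.
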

In particular, if $G$ is MIF, then $\zar$ is not Hausdorff. Hence, we can use Proposition~\ref{prop:elliott} to show that several of the groups within the scope of  {Theorem~\ref{mainthm:noalg}, Theorem~\ref{theorem:main}, and Theorem~\ref{mainthm:micro}}
do satisfy  (necessarily singular) mixed identities: namely, we can resort to existing results in the large literature on Zariski topologies showing that $\zar$ is Hausdorff.

%% file: Sections/Preliminaries.tex
\section{Preliminaries}\label{sect:prelims}

\begin{notation}
{For $n\in\mathbb{N}$, $[n]$ denotes the set $\{0, \dots, n\}$. For a set $C$, we denote by $[C]^{<\omega}$ the set of finite subsets of $C$, and by $\mathcal{P}(C)$ the set of all subsets of $C$.} 
\end{notation}

\subsection{Mixed identities}\label{subsect:mixedidentities}

\begin{definition}\label{def:word} A \textbf{word with constants} $w(x_1, \dots, x_r)$ for the group $G$ and with $r$  variables  is an element
\begin{equation}\label{eq:word}
\gamma_n\ x_{\iota_n}^{\epsilon_n}\ \gamma_{n-1}\  x_{\iota_{n-1}}^{\epsilon_{n-1}}\ \dots\ \gamma_1\ x_{\iota_{1}}^{\epsilon_1}\ \gamma_0 
\end{equation}
of $G*F_r$, 
the free product of $G$ and the free group $F_r$ with $r$ many generators $x_1,\ldots,x_r$ (which we also refer to as \textbf{variables}), where $\iota_1, \dots, \iota_n\in \{1, \dots, r\}$, and $\epsilon_1, \dots, \epsilon_n\in\{-1, 1\}$, and where $\gamma_0,\ldots,\gamma_n$ are elements of $G$. For $0<j< n$, 
we say that $\gamma_j$ is a \textbf{critical constant} if $\iota_{j}=\iota_{j+1}$ and $\epsilon_j=-\epsilon_{j+1}$, 
i.e.~it is sandwiched in $w$ between a variable and its inverse. In order to have unique representations for words, we assume\footnote{{Some of the literature on mixed identities~\cite{bodirsky2024mixed, bradford2023non, bradford2024length, schneider2024word} further requires critical constants to be non-central (and calls such words \textbf{reduced}); 
in other words, $G*F_r$ is factored by the (singular) identities $[x_i,\gamma]=1$ for all $i\in\{1, \dots, r\}$ and $\gamma\in Z(G)$ (which are thereby considered trivial). 
The latter never produces the trivial word from a regular one, and is hence only relevant when studying singular mixed identities.}} 
 that no critical constant is $1\in G$.
 The word $w$ determines the \textbf{word map} $w\colon G^r\to G$, which is obtained by sending $(g_1, \dots, g_r)$ to the image of $w$ under the {group homomorphism $G*F_r\to G$} fixing $G$ and sending $x_i$ to $g_i$ for each {$i\in\{1,\ldots,r\}$}.  The word $w$ is called a \textbf{mixed identity} of $G$ if $w(G^r)=1$.    
 \end{definition}
 \begin{definition}\label{def:singular}
 The \textbf{natural augmentation} is the unique homomorphism  $\rk\colon G*F_r\to F_r$ which sends  each element of $G$ to $1_{F_r}$ and fixes $F_r$. 
  As is common in the literature, we say that $w$ is \textbf{singular} if $\rk(w)=1_{F_r}$; following~\cite{pestov2008hyperlinear}, we say that a word $w$ is  \textbf{regular} otherwise. We call $\rk(w)$ the \textbf{freeness rank} of $w$.   
\end{definition}


\begin{definition}
{We say that $w$ is \textbf{trivial} if $w=1$ in $G*F_r$. 
  The group $G$ is \textbf{mixed-identity-free (MIF)} if its only mixed identity is the trivial word.}
\end{definition}

\subsection{Group actions}

Throughout this paper, $G\acts\Omega$ indicates a 
group $G$ {acting}  {faithfully} 
on an {infinite} set $\Omega$ by permutations. 

\begin{remark}[Non-faithful group actions] \label{rem:unfaithful} {We note that our results also apply to the context of non-faithful actions of $G$, which may be helpful for applications. In that case, we can still endow it with the topology of pointwise convergence defined below. 
Only, the latter will not separate any two group elements acting the same way. {Let $K\unlhd G$ be the kernel of a non-faithful action of $G$ on $\Omega$.}
If that action satisfies the assumptions of either Theorem~\ref{mainthm:noalg} or Theorem~\ref{theorem:main}, then, {for any regular word $w$,} the set $\{(g_1,\ldots,g_r)\in G^r\ \vert \ w(g_1,\ldots,g_r)\not\in K\}$ is dense in $G^r$, 
and 
all mixed identities of $G$ are singular. This is also the case in Theorem~\ref{mainthm:micro} with the caveat that for a non-faithful action of $G$ on the Hausdorff space $\spOm$, we say it is micro-supported if $G_{\Omega\setminus U}\neq K$ for every non-empty open set $U\subseteq\Omega$.}
\end{remark}



\begin{notation} For $G\acts\Omega$, $A\subseteq\Omega$, and $\overline{b}, \overline{b}'$ tuples of elements of $\Omega$ of the same length, we write $\overline{b}\equiv_A\overline{b}'$ if $\overline{b}$ and $\overline{b}'$ are in the same orbit of the componentwise action of $G_A$, the pointwise stabiliser of $A$, on tuples. We extend this notation to tuples  $\overline a$ instead of sets $A$, with the same meaning for the stabiliser of all elements of the tuple.   We write $\overline{b}\equiv\overline{b}'$ to indicate $\overline{b}\equiv_\emptyset\overline{b}'$. For $\Sigma\subseteq G$, we write $\Sigma\cdot \overline b$ for the set $\{\gamma(\overline{b})\;|\; \gamma\in \Sigma\}$; in particular, $G\cdot  \overline{b}$ is the orbit of $\overline b$. 
\end{notation}

\begin{definition}\label{def:top} {For $G\acts\Omega$, we endow $G$ with the \textbf{topology of pointwise convergence} $\pw$, obtained as follows: $\Omega$ is taken to be discrete; $\Omega^\Omega$ is given the product topology; and $G$, viewed as a subset thereof, the induced topology. 
When considering $G^r$ for some $r\in\mathbb{N}$, we shall endow it with the product topology of the topology $\pw$ on $G$.}
For finite tuples $\overline{b}\equiv\overline{b}'$, we write $G_{(\overline{b}, \overline{b'})}$ for the {$\pw$-}open subset of $G$ consisting of all  $h\in G$ such that $h(\overline{b})=\overline{b}'$; these sets 
form a basis of $\pw$. 
\end{definition}

{In Theorem~\ref{mainthm:noalg}, the more general Theorem~\ref{theorem:main}, and Theorem~\ref{mainthm:micro}, we prove that the 
set of tuples of group elements not satisfying an equation given by a regular word is not only non-empty, but even dense with respect to $\pw$.} {This means that   the set of tuples satisfying the equation does not contain any non-empty open set; since it is $\pw$-closed, 
this is tantamount to saying it is nowhere dense. {So, if 
{$G^r$} 
is a Baire space {(for example, when {$G$}  {acts faithfully as a} 
closed permutation group on a countable set),}
the union of solution sets of countably many equations of regular words 
{still has empty interior.}}


%


\begin{definition}\label{def:acl} Let $G\acts\Omega$. We define the operation of \textbf{algebraic closure} $\acl\colon\mathcal{P}(\Omega)\to\mathcal{P}(\Omega)$ as follows:  for $B\in [\Omega]^{<\omega}$ we set   $a\in\acl(B)$ if and only if $G_B\cdot a$ is finite; and for $C\subseteq \Omega$ infinite we set  
\[\acl(C):=\bigcup \{\acl(B)\vert \ B\in\fps{C}\}\;.\]
Note that algebraic closure is a {$G$-{invariant operator}} in the sense that $g\cdot \acl({C})=\acl(g\cdot {C})$ for any $g\in G$ and ${C}\subseteq \Omega$. {We shall discuss such operators  more abstractly in the next section, as our most general theorem (Theorem~\ref{theorem:main}) assumes the existence of such {an} {operator $\cl$ satisfying some general geometric requirements.}}
{We remark that these requirements together imply $\acl(C)\subseteq \cl{(C)}$ for all $C\subseteq \Omega$, cf.~Remark~\ref{rem:minimalityOfacl}.} 

\end{definition}

\begin{definition}
    We say that $G\acts\Omega$ has \textbf{no algebraicity} if for all $B\in[\Omega]^{<\omega}$ we have $\acl(B)=B$.
\end{definition}

It is well-known that if $G\acts\Omega$ is oligomorphic {(cf.~the introduction)}, then $\acl$ is \textbf{locally finite}: for $B$ finite, $\acl(B)$ is finite~\cite[Exercise 4.3.1]{tent2012course}. 

\begin{remark}
    If we defined the  algebraic closure of  infinite sets in the same way as we defined it on finite sets, we would obtain what is sometimes referred to as group-theoretic (or Galois) algebraic closure. The two notions might differ: 
    for example, for $C$ a dense and codense subset of $(\mathbb{Q}; <)$, $\acl(C)=C$, but its group-theoretic algebraic closure is all of $\mathbb{Q}$.
\end{remark}


\subsection{{Closure operators}}

The context of the most general formulation of our result (Theorem~\ref{theorem:main}) will be so that algebraic closure, {or some other abstract $G$-invariant closure operator $\cl$ extending it,}  satisfies  similar axioms to those of linear span in a vector space. For this reason, we need to introduce some standard terminology from matroid theory~\cite{oxley2006matroid} and model theory~\cite{d2023axiomatic}.

\begin{definition} Let $\Omega$ be a set. A \textbf{closure operator} is a function $\cl\colon \mathcal{P}(\Omega)\to\mathcal{P}(\Omega)$ satisfying the following axioms {for all $A,B\subseteq \Omega$}: 

\begin{tabular}{ll}
    {\sc(reflexivity)} & $A\subseteq\cl(A)$;\\
{\sc (transitivity)} & $\cl(\cl(A))=\cl(A)$;\\
     {\sc (monotonicity)} & if $A\subseteq B$, then $\cl(A)\subseteq\cl(B)$.
\end{tabular}

{A} closure operator is {called} \textbf{algebraic} if it 
is of finite character:

\begin{tabular}{ll}
{\sc(finite character)} &  $\cl(A)$ is the union of all $\cl(C)$ where $C$ is a finite subset of $A$.
\end{tabular}

{For an algebraic closure operator $\cl$ on $\Omega$, we say that $(\Omega,\cl)$}   
forms a \textbf{pregeometry} if it satisfies the following 
analogue of Steinitz's exchange  lemma: 

\begin{tabular}{ll}
  {\sc (exchange)} &  if $a\in\cl(A\cup\{b\})\setminus\cl(A)$, then $b\in\cl(A\cup\{a\})$.  
\end{tabular}

     {A set} $A\subseteq \Omega$ is  \textbf{closed} if $\cl(A)=A$. {If $G\acts\Omega$ is a group action such that $\cl(g\cdot C)=g\cdot \cl(C)$ for all $g\in G$ and all $C\subseteq \Omega$, then we call $\cl$ and  $(\Omega,\cl)$ \textbf{$G$-invariant}.}
\end{definition}

Note that for $G\acts\Omega$, the operator $\acl$ satisfies reflexivity, transitivity, and monotonicity; and we have defined it to have finite character. Furthermore, as long as some element of $\Omega$ has an infinite orbit (which will be the case for all groups we consider), $\acl$ is non-trivial in the sense that $\acl(\emptyset)\neq \Omega$. 

In general, exchange may fail to be satisfied by  $\acl$, and hence it does not always 
yield 
a pregeometry. However,  it
is easy to see that $\acl$ does satisfy exchange e.g.~in the following cases: whenever $G\acts\Omega$ has no algebraicity; and  when $G$ is $\mathrm{GL}(\kappa, \mathbb{F}_q)$ or $\mathrm{PGL}(\kappa, \mathbb{F}_q)$, in its action on the corresponding $\kappa$-dimensional vector/projective space where $\kappa$ is an infinite cardinal. {Moreover, linear span in a vector space and projective closure in a projective space over infinite fields are closure operators extending algebraic closure, and also give rise to a pregeometry.} In a pregeometry we can define notions of independent sets, basis, and dimension in the obvious way (see~\cite[Appendix C]{tent2012course} for the claims below). For the reader not familiar with these  notions in this abstract setting, it might be useful to keep the above examples in mind. We remark that in the case of {a group action with} no algebraicity, the notion of dimension {induced by $\acl$} degenerates to that of  cardinality.

\begin{definition} Let $(\Omega, \cl)$ form a pregeometry. We call  $A\subseteq \Omega$ an \textbf{independent set} if for each $b\in A$ we have $b\not\in\cl(A\setminus\{b\})$. We say that $A\subseteq \Omega$ is a \textbf{generating set} if $\Omega=\cl(A)$. A \textbf{basis} 
is an independent generating set. It is easy to prove that every pregeometry has a basis and all bases in a pregeometry have the same cardinality. The \textbf{dimension} of $\Omega$ is the cardinality of any/all of its bases.  
\end{definition}

The above notions can then be extended to arbitrary subsets of a pregeometry in the standard way. For this it will be helpful to define restrictions and relativisations.

\begin{definition} For a pregeometry $(\Omega, \cl)$ and $A\subseteq \Omega$, we can define two new pregeometries:
\begin{itemize}
    \item the \textbf{restriction} $(A, \cl^A)$, where for $B\subseteq A$, $\cl^A(B)=\cl(B)\cap A$; and
    \item the \textbf{relativisation} $(\Omega, \cl_A)$, where for $B\subseteq \Omega$, $\cl_A(B)=\cl(B\cup A)$.
\end{itemize}
For $A\subseteq \Omega$, we write $\dim(A)$ for the dimension of $(A, \cl^A)$ and $\dim(\Omega/A)$ for the dimension of $(\Omega, \cl_A)$. We call the latter the \textbf{codimension} of $A$ in $\Omega$. 
{We extend this notation as follows: For $A,  B\subseteq\Omega$, setting $D:=\cl(A\cup B)$, we write  $\dim(B/A)$ for the codimension of $A$ in $(D,\cl^D)$.}
\end{definition}

\begin{definition} Let $\Omega$ be a set. Given a 
closure operator 
$\cl\colon \mathcal{P}(\Omega)\to\mathcal{P}(\Omega)$, we can define the ternary relation $\ind$ of \textbf{independence}, where for $A, B, C\subseteq \Omega$ we write 
\[A\ind_C B\quad \text{ if and only if }\quad \cl(A\cup C)\cap\cl(B\cup C)=\cl(C)\;.\]
    In the circumstances above, we say that $A$ is \textbf{independent} from $B$ over $C$. To simplify notation, if in the above setting $C=\emptyset$, we just write $A\ind  B$ {and say that $A$ is independent from $B$.} Slightly abusing notation, we also apply the notation $\ind$ to tuples and elements of $\Omega$.
    
\end{definition}

\begin{fact}\label{fact:indpregeom} Let $(\Omega, \cl)$ be a pregeometry. Let $C\subseteq \Omega$ be closed and let $D$ be the closure in $(\Omega, \cl)$ of a basis for $(\Omega, \cl_C)$. Then, $D\ind C$. 
\end{fact}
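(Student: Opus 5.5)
We need to show: if $C$ is closed and $D=\cl(E)$ where $E$ is a basis of $(\Omega,\cl_C)$, then $\cl(D)\cap\cl(C)=\cl(\emptyset)$. Since $C$ and $D$ are both closed, this amounts to $D\cap C=\cl(\emptyset)$.

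The inclusion $\cl(\emptyset)\subseteq D\cap C$ is immediate from monotonicity. For the reverse inclusion, suppose towards a contradiction that some $a\in D\cap C$ satisfies $a\notin\cl(\emptyset)$. By finite character, there is a finite $E_0\subseteq E$ with $a\in\cl(E_0)$; choose $E_0=\{e_1,\dots,e_k\}$ of minimal size with this property. Then $k\geq 1$, and minimality gives $a\notin\cl(\{e_1,\dots,e_{k-1}\})$. Since $a\in\cl(\{e_1,\dots,e_{k-1}\}\cup\{e_k\})$, exchange yields
\[
e_k\in\cl(\{e_1,\dots,e_{k-1},a\})\subseteq\cl(\{e_1,\dots,e_{k-1}\}\cup C)=\cl_C(E\setminus\{e_k\}),
\]
where the inclusion uses $a\in C$. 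This contradicts the independence of $E$ in $(\Omega,\cl_C)$, which requires $e_k\notin\cl_C(E\setminus\{e_k\})$. Hence $D\cap C\subseteq\cl(\emptyset)$, and so $D\ind C$.

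The only delicate point is setting up the minimal finite witness so that exchange applies. This requires the non-membership $a\notin\cl(\{e_1,\dots,e_{k-1}\})$, which is exactly what minimality of $E_0$ provides. The hypothesis $a\notin\cl(\emptyset)$ is what guarantees $k\geq1$. Note that the generating part of the basis assumption is not used; only independence over $C$ is needed.
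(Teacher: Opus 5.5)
Your proof is correct and follows the paper's argument: take a minimal set of basis elements whose closure contains an element of $(D\cap C)\setminus\cl(\emptyset)$, apply exchange to one of those basis elements, and use membership in $C$ to contradict independence of the basis in $(\Omega,\cl_C)$. One small slip: the last step of your display should be an inclusion, $\cl(\{e_1,\dots,e_{k-1}\}\cup C)=\cl_C(\{e_1,\dots,e_{k-1}\})\subseteq\cl_C(E\setminus\{e_k\})$, not an equality, and the inclusion is all the argument needs.
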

\begin{proof} 
{Fix a basis $B$ for $(\Omega, \cl_C)$. Suppose for contradiction that there exists $x\in (D\cap C)\setminus \cl(\emptyset)$. There is a minimal $B'\subseteq B$ such that $x\in\cl(B')$. We must have $B'\neq \emptyset$; for any $b\in B'$ we then have that $b\in\cl(B\setminus\{b\}\cup\{x\})$. Hence,  
$D=\cl(B\setminus\{b\}\cup\{x\})$. Since $x\in C$, this implies that $B\setminus\{b\}$ is a basis for $(\Omega, \cl_C)$, a contradiction.}
\end{proof}

\begin{definition}
We say that a pregeometry $(\Omega, \cl)$ is \textbf{modular} if for all closed $A, B\subseteq \Omega$,
\[\dim(A\cup B)+\dim(A\cap B)=\dim(A)+\dim(B)\;.\]
The operation above is cardinal addition since the dimension of a set can be infinite.
\end{definition}

\begin{fact}[{{Proposition 1.2.17} in~\cite{d2023axiomatic}}]\label{fact:equivalentsmodular} The following conditions are equivalent {for a pregeometry $(\Omega,\cl)$}:
\begin{itemize}
    \item $\cl$ is modular;
    \item for any closed $A, B\subseteq\Omega$ we have  $A\ind_{A\cap B} B$;
    \item whenever $A, B\subseteq\Omega$ and  $c\in\cl(A\cup B)$, then there are $a\in \cl(A)$ and $b\in \cl(B)$ such that $c\in\cl(\{a,b\})$.
\end{itemize}    
\end{fact}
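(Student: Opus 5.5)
Before the plan, a point about the definitions. With the paper's definition of $\ind$, condition (2) is vacuous: for closed $A,B$ with $C=A\cap B$ we get $\cl(A\cup C)\cap \cl(B\cup C)=A\cap B=\cl(C)$. Non-modular pregeometries exist (e.g.\ affine geometry), so the second bullet has to be read with dimension-theoretic independence, as in~\cite{d2023axiomatic}: $A\ind_C B$ means every finite $A_0\subseteq A$ independent over $C$ stays independent over $B\cup C$, i.e.\ $\dim(A_0/C)=\dim(A_0/B\cup C)$. I will prove the equivalence with that reading. Throughout, I use finite character to reduce to finite-dimensional closed sets, so that dimensions can be subtracted.

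\emph{(1)$\Leftrightarrow$(2).} Fix closed $A,B$ and put $C=A\cap B$. Choose a basis $E$ of $C$, and extend it by $E_A\subseteq A$ to a basis of $A$ and by $E_B\subseteq B$ to a basis of $B$. The set $E\cup E_A\cup E_B$ generates $\cl(A\cup B)$. Hence the modular equality holds iff this set is independent, which for finite pieces is a dimension count. Independence of $E\cup E_A\cup E_B$ says exactly that $E_A$, which is independent over $C$, remains independent over $B$. That is the dimension-theoretic statement $A\ind_{C}B$, and applying it to all finite subsets handles the infinite-dimensional case.

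\emph{(1)$\Rightarrow$(3).} Let $c\in\cl(A\cup B)$. By finite character we may take $A,B$ finite-dimensional and closed. If $c\in A$ we are done, taking $a=c$ and any $b\in B$. Otherwise put $A'=\cl(A\cup\{c\})$. Then $\cl(A'\cup B)=\cl(A\cup B)$, and two applications of modularity give
\[\dim(A'\cap B)=\dim(A\cap B)+1.\]
So there is $b\in(A'\cap B)\setminus A$, and exchange yields $c\in\cl(A\cup\{b\})$. If $c\in\cl(\{b\})$ we are done with any $a$. Otherwise apply modularity to the closed sets $X=\cl(\{b,c\})$ and $A$. We have $\cl(X\cup A)=\cl(A\cup\{c\})$ of dimension $\dim A+1$ and $\dim X=2$, so $\dim(X\cap A)=1$. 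Pick $a\in (X\cap A)\setminus\cl(\emptyset)$. Now $a\notin\cl(\{b\})$, since otherwise $b\in\cl(\{a\})\subseteq A$ by exchange, contradicting $b\notin A$. So $a\in\cl(\{b,c\})\setminus\cl(\{b\})$, and exchange gives $c\in\cl(\{a,b\})$.

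\emph{(3)$\Rightarrow$(1).} With $E,E_A,E_B$ as above, I show $E\cup E_A\cup E_B$ is independent. This is the step I expect to be the main obstacle, because the witness supplied by (3) must be pushed back into $A\cap B$. Suppose independence fails. Then, after reordering, some $e\in E_B$ lies in $\cl(A\cup B_0)$, where $B_0=\cl(E\cup (E_B\setminus\{e\}))$. Apply (3) to the sets $A$ and $B_0$: there are $a\in A$ and $b\in B_0$ with $e\in\cl(\{a,b\})$. Since $e\notin B_0$, exchange gives $a\in\cl(\{b,e\})\subseteq B$. Hence $a\in A\cap B=\cl(E)\subseteq B_0$, so $e\in B_0$, a contradiction. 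The modular equality then follows by counting bases, first for finite dimensions and then via finite character in general.
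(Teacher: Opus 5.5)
Your proof is correct. The paper gives no argument of its own here and only cites Proposition~1.2.17 of~\cite{d2023axiomatic}, so what you have written is a self-contained replacement rather than a competing route.

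Your diagnosis of the second bullet is right. With the paper's definition of $\ind$, for closed $A,B$ one has $\cl(A\cup(A\cap B))\cap\cl(B\cup(A\cap B))=A\cap B=\cl(A\cap B)$. So that bullet holds in every pregeometry, and the stated equivalence would force affine geometry to be modular. The dimension-theoretic reading you adopt is the one in the cited source. It is also what the paper relies on later: in the proof of Fact~\ref{fact:codimension}, independence of $A$ and $B$ over $A\cap B$ is used to conclude $\dim(A/B)=\dim(A/A\cap B)$. That inference is valid for dimension-theoretic independence but not for the closure-intersection version in a general pregeometry.

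The substantive steps check out:
\begin{itemize}
\item In (1)$\Rightarrow$(3), the double use of modularity and the two exchanges are sound, including the checks $\dim\cl(\{b,c\})=2$ and $a\notin\cl(\{b\})$.
\item In (3)$\Rightarrow$(1), pushing the witness $a$ into $A\cap B=\cl(E)\subseteq B_0$ is exactly the right move. The reduction to a single $e\in E_B$ with $e\in\cl(A\cup B_0)$ is just the standard fact that $E_B$ fails to be independent over $A$.
\end{itemize}

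Two points should be tightened.

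First, the paper's definition uses cardinal addition, so the modular equality holds automatically as soon as $A$ or $B$ is infinite-dimensional: both sides equal $\max(\dim A,\dim B)$. Your ``iff'' between that equality and independence of $E\cup E_A\cup E_B$ is therefore valid only in finite dimension. In (1)$\Rightarrow$(2) the finite-character step carries real weight and should be made explicit:
\begin{itemize}
\item Suppose a finite $A_0\subseteq A$, independent over $A\cap B$, became dependent over $B$. This would be witnessed by a finite $B_0\subseteq B$.
\item $A_0$ is still independent over the smaller set $\cl(A_0)\cap\cl(B_0)\subseteq A\cap B$.
\item The finite-dimensional count for $\cl(A_0)$ and $\cl(B_0)$ then gives the contradiction.
\end{itemize}
In (2)$\Rightarrow$(1) and (3)$\Rightarrow$(1) no such step is needed. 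Once $E\cup E_A\cup E_B$ is independent, counting gives the equality for all closed $A,B$ at once.

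Second, the third bullet taken literally fails when $\cl(\emptyset)=\emptyset$ and $B=\emptyset$, since no $b$ exists. This happens for actions with no algebraicity, which the paper counts as modular. Your ``any $b\in B$'' and ``any $a$'' implicitly assume non-emptiness. With the usual convention that $a$ or $b$ may be omitted, your argument goes through unchanged.
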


\begin{examples} The following are examples of modular pregeometries:
\begin{itemize}
    \item any pregeometry {$(\Omega,\cl)$} where $\cl$ is \textbf{degenerate}:\\ for each $A\subseteq\Omega$ we have  $\cl(A)=\bigcup_{a\in A} \cl(a)$; 
    \item the pregeometry $(V, \cl)$ where $V$ is a vector space and $\cl$ is linear span;
    \item the pregeometry $(PV, \cl)$ where $PV$ is a projective space and $\cl$ is projective closure.
\end{itemize}
An example of a non-modular pregeometry is {$(\mathbb{C}, \cl)$ where $\mathbb{C}$ is the field of complex numbers and $\cl$ is algebraic closure~\cite[Example 1.8]{pillay1996geometric}.} 
\end{examples}

\begin{fact}\label{fact:modularity} Let $(\Omega, \cl)$ be a modular pregeometry. Let $A, B\subseteq\Omega$ be closed such that $A\ind B$ and suppose that
\[a\in\cl(A\cup B)\setminus(A\cup B)\;.\]
Then, $A\cup\{a\}\nind B$.
\end{fact}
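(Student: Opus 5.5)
We argue by contradiction, using the third characterisation of modularity in Fact~\ref{fact:equivalentsmodular}. Assume $A\cup\{a\}\ind B$, i.e.\ $\cl(A\cup\{a\})\cap B=\cl(\emptyset)$; here we use that $B$ is closed, so $\cl(B)=B$. We want to show that this forces $a\in A\cup B$.

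Since $a\in\cl(A\cup B)$ and $\cl$ is modular, Fact~\ref{fact:equivalentsmodular} provides $a'\in\cl(A)=A$ and $b\in\cl(B)=B$ with $a\in\cl(\{a',b\})$. Note that $a\notin\cl(\{a'\})$: otherwise $a\in\cl(A)=A$. So $a\in\cl(\{a'\}\cup\{b\})\setminus\cl(\{a'\})$, and exchange yields $b\in\cl(\{a',a\})\subseteq\cl(A\cup\{a\})$. As $b\in B$, the assumption $A\cup\{a\}\ind B$ gives $b\in\cl(\emptyset)$. But then $a\in\cl(\{a',b\})\subseteq\cl(\{a'\}\cup\cl(\emptyset))=\cl(\{a'\})\subseteq A$, contradicting $a\notin A$. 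Hence $A\cup\{a\}\nind B$.

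Two things need care. First, we must read $A\cup\{a\}\ind B$ correctly from the definition, namely as $\cl(A\cup\{a\})\cap\cl(B)=\cl(\emptyset)$, and use $\cl(B)=B$. Second, the argument only uses $a\notin A$; neither $a\notin B$ nor $A\ind B$ is needed, though they are harmless. The only non-formal input is Fact~\ref{fact:equivalentsmodular}. I expect the main obstacle to be simply confirming that its third condition applies verbatim to the closed sets $A,B$, which it does since it is stated for arbitrary subsets. A fallback is the dimension formula applied to $\cl(A\cup\{a\})$ and $B$, using $A\ind B$ to compute dimensions, but the route above avoids cardinal arithmetic.
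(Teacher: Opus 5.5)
Your proof is correct and is essentially the paper's argument: the third characterisation in Fact~\ref{fact:equivalentsmodular} gives $a'\in A$ and $b\in B$ with $a\in\cl(\{a',b\})$. Exchange then puts $b$ into $\cl(A\cup\{a\})$, and $b\notin\cl(\emptyset)$ because otherwise $a\in\cl(\{a'\})\subseteq A$. You only phrase this last step as a contradiction where the paper argues directly, and your instance of exchange, using $a\notin\cl(\{a'\})$, is the right one (the paper's text writes $\cl(b')$ at that point). Your remark that only $a\notin A$ is actually used, and not $a\notin B$ or $A\ind B$, is also accurate.
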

\begin{proof} By Fact~\ref{fact:equivalentsmodular}, there are $a'\in A$ and $b'\in B$ such that $a\in\cl(\{a',b'\})$; neither $a'$ nor $b'$ can belong to $\cl(\emptyset)$. In particular, since $a\not\in A\cup B$, we have $a\in\cl(\{a',b'\})\setminus \cl(b')$. Thus, by exchange, $b'\in\cl(\{a,a'\})$. 
Hence, $\cl(A\cup \{a\})\cap B\supsetneq \cl(\emptyset)$, and so  $A\cup\{a\}\nind B$.
\end{proof}

\begin{fact}\label{fact:codimension} Let $(\Omega, \cl)$ be a modular pregeometry. Then, the intersection of finitely many closed sets of finite codimension has finite codimension.  
\end{fact}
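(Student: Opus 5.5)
By induction on the number of closed sets, it suffices to treat two closed sets $A,B\subseteq\Omega$ with $\dim(\Omega/A)$ and $\dim(\Omega/B)$ both finite, and to show that $\dim(\Omega/(A\cap B))$ is finite. The base case of one set is trivial, and the intersection of two closed sets is again closed. This last point follows from monotonicity: $\cl(A\cap B)\subseteq\cl(A)\cap\cl(B)=A\cap B$.

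The plan is to compare bases using modularity in the form $A\ind_{A\cap B}B$ from Fact~\ref{fact:equivalentsmodular}. Set $C:=A\cap B$.
\begin{itemize}
\item Choose a basis $I$ of $(A,\cl_C)$, i.e.\ of $A$ over $C$, so that $\cl(C\cup I)=A$.
\item I claim that $I$ remains independent over $B$.
\item Given the claim, extend $I$ together with a finite basis $J$ of $\Omega$ over $A$. Then $\cl(C\cup I\cup J)\supseteq\cl(A\cup J)=\Omega$.
\item Hence $\dim(\Omega/C)\le |I|+|J|$, where $|J|=\dim(\Omega/A)<\omega$.
\end{itemize}
So it remains to bound $|I|$.

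For the claim, relativise to $C$. The pregeometry $(\Omega,\cl_C)$ is still modular, and in it $\cl_C(A)\cap\cl_C(B)=\cl_C(\emptyset)$ by $A\ind_C B$. Suppose some finite $I_0\subseteq I$ were dependent over $B$. Take such an $I_0$ minimal, and pick $i\in I_0$ with $i\in\cl(B\cup I_0\setminus\{i\})\setminus\cl(C\cup I_0\setminus\{i\})$. By the third bullet of Fact~\ref{fact:equivalentsmodular}, applied to the sets $B$ and $C\cup (I_0\setminus\{i\})$, there are $b\in B$ and $a\in\cl(C\cup I_0\setminus\{i\})\subseteq A$ with $i\in\cl(\{a,b\})$. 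Since $i\notin\cl(a)$, exchange gives $b\in\cl(\{a,i\})\subseteq A$, so $b\in A\cap B=C$. Then $i\in\cl(C\cup\{a\})\subseteq\cl(C\cup I_0\setminus\{i\})$, a contradiction. Hence $I$ is independent over $B$.

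Because $I$ is independent over $B$, we get $|I|\le\dim(\Omega/B)<\omega$, since an independent set over $B$ extends to a basis of $(\Omega,\cl_B)$. This finishes the two-set case, and the induction gives the general statement.

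The main obstacle is the exchange/modularity step above. The key point there is to apply modularity with the right pair of sets, and to use that $C$ is closed so that $b$ lands in $C$. The boundary case $a\in\cl(\emptyset)$ is covered by the same argument.
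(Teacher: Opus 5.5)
Your proof is correct and follows the paper's route: reduce to two closed sets, bound $\dim(\Omega/A\cap B)$ by $\dim(A/A\cap B)+\dim(\Omega/A)$, and use modularity to show that a basis of $A$ over $A\cap B$ stays independent over $B$, so that $\dim(A/A\cap B)\le\dim(\Omega/B)<\omega$. The only difference is that the paper cites these two steps as lemmas from Tent and Ziegler, whereas you prove them by hand; your derivation of the independence transfer from the third bullet of Fact~\ref{fact:equivalentsmodular} together with exchange is sound.
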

\begin{proof}It is sufficient to show the fact for the intersection of two closed sets $A,B$ of finite codimension. We know that 
 $\dim(\Omega/A\cap B)=\dim(A/A\cap B)+\dim(\Omega/A)$~\cite[Lemma C.1.8]{tent2012course} (note these may be infinite cardinals). Since $A$ has finite codimension, $\dim(\Omega/A)$ is finite. By modularity,  Fact~\ref{fact:equivalentsmodular} implies that $A$ and $B$ are independent over $A\cap B$; this easily implies   $\dim(A/B)=\dim(A/A\cap B)$~\cite[Lemma C.1.10]{tent2012course}. Since $B$ has finite codimension in $\Omega$, $\dim(A/B)$ is finite. Hence, $\dim(\Omega/A\cap B)$ is finite, as desired.
\end{proof}




{The following fact is folklore:}

\begin{fact}\label{fact:axioms} 
{Let $\Omega$ be a set, and let $\cl$ be a closure operator on $\Omega$. Then, $\ind$ satisfies the following properties for all $A,B,C,D\subseteq \Omega$:}

\begin{tabular}{ll}
  {\sc(existence)} &   $A\indep{C}{} C$;\\
  {\sc(normality)} & if $A\indep{C}{} B$, then $A\indep{C}{} B\cup C$;\\
  {\sc(symmetry)}&   $A\indep{C}{}B$ if and only if $B\indep{C}{}A$;\\
   {\sc(monotonicity)} &  if $A\indep{C}{} B\cup D$ then $A\indep{C}{} B$;\\
   {\sc(transitivity)} & {if $B\subseteq C\subseteq D$, $A\indep{B}{} C$,  and $A\indep{C}{} D$, then $A\indep{B}{} D$.}
\end{tabular}

Suppose further that $(\Omega, \cl)$ forms a pregeometry. Then, $(\Omega, \cl)$ is modular if and only if $\ind$ further satisfies the following for all $A,B,C,D\subseteq \Omega$: 

\begin{tabular}{ll}
{\sc(base monotonicity)} & if {$B\subseteq C\subseteq D$ and   $A\indep{B}{} D$,  then $A\indep{C}{} D$.}
\end{tabular}

\end{fact}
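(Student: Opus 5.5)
The first five properties are checked directly from the definition of $\ind$ and the closure axioms, and the equivalence for pregeometries goes through the third characterisation of modularity in Fact~\ref{fact:equivalentsmodular}.

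\textbf{The five general properties.}
\begin{itemize}
\item \emph{Existence.} We have $\cl(A\cup C)\cap\cl(C)=\cl(C)$ by monotonicity of $\cl$.
\item \emph{Normality.} We have $\cl(B\cup C\cup C)=\cl(B\cup C)$.
\item \emph{Symmetry.} The defining condition is symmetric in $A$ and $B$.
\item \emph{Monotonicity.} Always $\cl(C)\subseteq\cl(A\cup C)\cap\cl(B\cup C)$. Also $\cl(A\cup C)\cap\cl(B\cup C)\subseteq \cl(A\cup C)\cap\cl(B\cup D\cup C)=\cl(C)$.
\item \emph{Transitivity.} Let $B\subseteq C\subseteq D$ and take $x\in\cl(A\cup B)\cap\cl(D)$. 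Since $\cl(A\cup B)\subseteq\cl(A\cup C)$ and $\cl(D\cup C)=\cl(D)$, the hypothesis $A\indep{C}{} D$ gives $x\in\cl(C)$. Since $\cl(C\cup B)=\cl(C)$, the hypothesis $A\indep{B}{} C$ then gives $x\in\cl(B)$. The reverse inclusion is trivial.
\end{itemize}

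\textbf{Modularity implies base monotonicity.} Let $B\subseteq C\subseteq D$ with $A\indep{B}{} D$, and take $x\in\cl(A\cup C)\cap\cl(D)$. Since $\cl(A\cup C)=\cl((A\cup B)\cup C)$, the third item of Fact~\ref{fact:equivalentsmodular} yields $a\in\cl(A\cup B)$ and $c\in\cl(C)$ with $x\in\cl(\{a,c\})$.
\begin{itemize}
\item If $x\in\cl(c)$, then $x\in\cl(C)$ and we are done.
\item Otherwise, exchange gives $a\in\cl(\{x,c\})\subseteq\cl(D)$. Hence $a\in\cl(A\cup B)\cap\cl(D)=\cl(B)\subseteq\cl(C)$. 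Then $x\in\cl(\{a,c\})\subseteq\cl(C)$.
\end{itemize}

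\textbf{Base monotonicity implies modularity.} I argue by contraposition, using the third item of Fact~\ref{fact:equivalentsmodular} again.

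The first step is to extract from a failure of modularity a finite set $C$ and points $x,y,u,v$ with two properties. First, $\dim(\{x,y,u,v\}/C)=3$, with each pair $\{x,y\}$ and $\{u,v\}$ independent over $C$. Second, $\cl(C\cup\{x,y\})\cap\cl(C\cup\{u,v\})=\cl(C)$; that is, "two coplanar lines over $C$ which do not meet".

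Given this configuration, I apply base monotonicity with base $C\subseteq\cl(C\cup\{u\})\subseteq\cl(C\cup\{u,v\})$ and $A=\{x,y\}$. The hypothesis $\{x,y\}\indep{C}{}\cl(C\cup\{u,v\})$ is exactly the non-meeting condition. So base monotonicity would give
\[\cl(C\cup\{u,x,y\})\cap\cl(C\cup\{u,v\})=\cl(C\cup\{u\}).\]
However, $\cl(C\cup\{u,x,y\})$ has dimension $3$ over $C$ and is contained in the $3$-dimensional set $\cl(C\cup\{x,y,u,v\})$, so the two are equal. Hence the left side equals $\cl(C\cup\{u,v\})$, which properly contains $\cl(C\cup\{u\})$ because $v\notin\cl(C\cup\{u\})$. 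This is a contradiction.

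\textbf{Main obstacle: producing the configuration.} Suppose $c\in\cl(A\cup B)$ admits no $a\in\cl(A)$, $b\in\cl(B)$ with $c\in\cl(\{a,b\})$.
\begin{enumerate}
\item By finite character, take $A,B$ finite.
\item Choose a counterexample minimising $|A|+|B|$.
\item Relativise to $\cl_{C}$ for suitable subsets $C$ of $A\cup B$ (relativisation preserves the pregeometry axioms) to shrink to $|A|=|B|=2$.
\item Take $x,y$ to be the points of $A$ and $u,v$ the points of $B$. Then $c$ witnesses that $\dim(\{x,y,u,v\}/C)=3$, while minimality forces the two lines not to meet over $C$.
\end{enumerate}
I expect this reduction, especially the bookkeeping of how the witnesses $a,b$ transfer under relativisation, to be the delicate part.
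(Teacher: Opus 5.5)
Your checks of existence, normality, symmetry, monotonicity and transitivity are correct; transitivity is argued exactly as in the paper. Your proof that modularity implies base monotonicity via the third item of Fact~\ref{fact:equivalentsmodular} is also correct, and it is the only direction used later (Lemma~\ref{lem:modularmoving}); the paper just cites the literature for the equivalence. Your final contradiction from two non-meeting coplanar lines is fine too: the needed $u\notin\cl(C\cup\{x,y\})$ follows from the non-meeting condition together with $u\notin\cl(C)$.

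The gap is the converse, base monotonicity $\Rightarrow$ modularity. Producing the configuration is the whole content of this direction, and your sketch does not do it; as described, it would not work.

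First, relativising at a set $C$ containing points of both $A$ and $B$ does not transfer the failure back to the original pregeometry. Since $\cl_C(A\setminus C)$ already contains $B\cap C$, a meeting point of your two lines over $C$ does not produce $a\in\cl(A)$ and $b\in\cl(B)$ with $c\in\cl(\{a,b\})$. So minimality of $|A|+|B|$ in the original pregeometry says nothing about whether the lines meet over $C$.

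Second, $c$ cannot ``witness'' $\dim(\{x,y,u,v\}/C)=3$ if $c$ is not one of the four points. In an algebraically closed field with $x,y,u,v$ algebraically independent, $c=xu+yv$ is a failure with $A\cup B$ independent. A minimal failure may also have $|A|=1$, e.g.\ $c=xu+v$ with $A=\{x\}$ and $B=\{u,v\}$. There the non-meeting lines are $\cl(\{x,c\})$ and $\cl(\{u,v\})$, so $c$ must itself be a point of the configuration.

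A short direct argument avoids configurations altogether.
\begin{itemize}
\item Take closed $A,B$ and set $E:=A\cap B$. For the closure-intersection relation, $A\indep{E}{} B$ holds trivially. Base monotonicity therefore gives $\cl(A\cup C)\cap B=\cl(C)$ whenever $E\subseteq C\subseteq B$.
\item Suppose a finite $\bar a\subseteq A$ is independent over $E$ but dependent over $B$. Choose $b_1,\ldots,b_k\in B$ with $k$ minimal such that $\bar a$ is dependent over $E\cup\{b_1,\ldots,b_k\}$, and put $C:=E\cup\{b_1,\ldots,b_{k-1}\}$.
\item Exchange gives $b_k\in\cl(C\cup\bar a)\cap B\subseteq\cl(A\cup C)\cap B=\cl(C)$. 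So $\bar a$ is already dependent over $C$, contradicting the minimality of $k$.
\item Hence $\dim(A/A\cap B)=\dim(A/B)$. Additivity of dimension turns this into $\dim(A\cup B)+\dim(A\cap B)=\dim(A)+\dim(B)$.
\end{itemize}
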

\begin{proof}
   {Existence, normality, symmetry, and monotonicity follow straightforwardly from the definition. Transitivity is also a routine check: suppose that $d\in\cl(A\cup B)\cap\cl(D)$. Firstly, since $A\ind_C D$, we must have that $d\in\cl(C)$. And since $A\ind_B C$,  $d\in\cl(B)$. Hence, $\cl(A\cup B)\cap \cl(D)=\cl(B)$, and so $A\ind_B D$ as desired. The final statement 
    on base monotonicity is standard; see~\cite[Proposition 1.2.12]{d2023axiomatic} for a proof.}
\end{proof}
Below, we prove a basic fact about 
{independence} in a modular context from the statements given in Fact~\ref{fact:axioms}. This will be helpful in our computations in the proof of Lemma~\ref{lem:hardinduction}. 
\begin{lemma}\label{lem:modularmoving} Suppose that {$(\Omega, \cl)$} is a modular pregeometry. {Then, for all $U,V,C\subseteq \Omega$ such that  $U\ind V$ and $C\ind U$}, we have that
 \[C\indep{}{} U\cup V\quad \text{ if and only if }\quad  C\cup U\indep{}{} V\;.\]
\end{lemma}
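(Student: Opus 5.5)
Both directions go by chaining the axioms of Fact~\ref{fact:axioms} (existence, normality, symmetry, monotonicity, transitivity, and base monotonicity, which is available because $(\Omega,\cl)$ is modular). The one substantive point is to find the right intermediate base over which to apply transitivity. Throughout we use that $\ind$ is defined through $\cl$, so any set may be replaced by its closure, and $\cl(\emptyset)$ behaves as the empty base.

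\emph{Forward direction.} Assume $C\ind U\cup V$; we want $C\cup U\ind V$. First, base monotonicity with $\emptyset\subseteq U\subseteq U\cup V$ gives $C\ind_U U\cup V$. By symmetry and monotonicity this becomes $V\ind_U C$, and normality upgrades it to $V\ind_U C\cup U$. Second, from $U\ind V$ and symmetry we get $V\ind U$. Now apply transitivity to $V$ along the chain $\emptyset\subseteq U\subseteq C\cup U$: the hypotheses are $V\ind_\emptyset U$ and $V\ind_U C\cup U$, so we obtain $V\ind C\cup U$. Symmetry then gives $C\cup U\ind V$. This direction does not use the hypothesis $C\ind U$.

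\emph{Backward direction.} Assume $C\cup U\ind V$; we want $C\ind U\cup V$. Symmetrically to the above, base monotonicity with $\emptyset\subseteq U\subseteq C\cup U$ gives $V\ind_U C\cup U$. By symmetry and monotonicity this yields $C\ind_U V$, and normality gives $C\ind_U U\cup V$. The hypothesis $C\ind U$ supplies $C\ind_\emptyset U$. Transitivity along $\emptyset\subseteq U\subseteq U\cup V$ then gives $C\ind U\cup V$. Here $C\ind U$ is exactly the extra hypothesis needed, and $U\ind V$ is not used.

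\emph{Main point of care.} The argument requires the axioms to hold for arbitrary, not necessarily closed, sets, and requires base monotonicity in the form stated in Fact~\ref{fact:axioms}, namely with $A\ind_B D$ where $B\subseteq C\subseteq D$. In both applications above the middle set $U$ lies inside the right-hand set ($U\cup V$, respectively $C\cup U$), so the inclusion hypotheses are met. Since that fact is stated for arbitrary subsets, no further obstacle arises. The only thing left to check is that each invocation of monotonicity removes a set from the correct side; symmetry handles this.
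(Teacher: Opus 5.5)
Your proof is correct and is essentially the paper's argument. The paper proves the chain $C\ind U\cup V \Leftrightarrow C\ind_U U\cup V \Leftrightarrow C\ind_U V \Leftrightarrow C\cup U\ind V$ using the same axioms at each step (base monotonicity, monotonicity/normality, transitivity), so your two directions are the two halves of that chain, and your observation that each direction uses only one of the hypotheses matches where the paper invokes transitivity.
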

\begin{proof}  We have the following equivalences, which we explain below:
\begin{align}
        C\indep{}{} U\cup V\quad & \Leftrightarrow\quad  C\indep{U}{} U\cup V \\
        &\Leftrightarrow\quad  C\indep{U}{} V \\ 
         & \Leftrightarrow\quad  C\cup U\indep{}{} V 
    \end{align}   
   For the first equivalence, $\Rightarrow$ follows by base monotonicity and $\Leftarrow$ by transitivity and monotonicity. For the second equivalence, $\Rightarrow$ follows by monotonicity and $\Leftarrow$ by normality. For the final equivalence, $\Rightarrow$ follows by transitivity, and $\Leftarrow $ by {base monotonicity}.
\end{proof}



{The following notion, which notably makes sense also when $\cl$ does not satisfy exchange, is  a generalization of Neumann's lemma~\cite{neumann1976structure}.}
The latter states that if $G\acts \Omega$, and $P, C\subseteq \Omega$ are finite, and all elements of $P$ have an infinite orbit, then there is $g\in G$ such that $g\cdot  P$ avoids $C$. In our generalization which we call ``higher Neumann'', the set $g\cdot P$ is further spread out by shifts 
{from} a finite set $\Sigma\subseteq G$. In the formal definition below, we shall content ourselves with a one-element $P=\{a\}$ as this is sufficient for our purposes. We shall give sufficient conditions  implying  higher Neumann in Section~\ref{sect:superspacious}, but mention already here  that the property is satisfied by $\acl$ in the situation of a group action with no algebraicity and  by linear span for 
general linear groups in their natural action on  vector spaces; {moreover, it is satisfied by projective closure for projective {linear} groups in their action on projective spaces.}} 

\begin{definition}\label{def:higherneumann}
   {Let $G\acts\Omega$ be a group action, and  let $\cl$ be a $G$-invariant closure operator on $\Omega$.} We say that 
   $(\Omega, {\cl})$ is \textbf{higher Neumann}
    if for each $B\in[\Omega]^{<\omega}$, $a\in\Omega$ such that $a\ind B$, $C\in[\Omega]^{<\omega}$, and $\Sigma\subseteq G$ finite, there is some $a'\equiv_B a$ such that 
\[\Sigma \cdot  a'\ind C\;.\]
\end{definition}

\begin{remark}\label{rem:minimalityOfacl}
    {If $\cl$ is a $G$-invariant closure operator such that $(\Omega, \cl)$ forms a higher Neumann pregeometry, then for all $B\subseteq \Omega$, $\acl(B)\subseteq\cl(B)$. We only have to verify this for finite $B$, as both operators are algebraic  by definition. In the following, we use $\ind$ with respect to $\cl$ (not $\acl$). Let $a\in\acl(B)$, i.e.~$G_B\cdot a$ is finite. {If $a\in\cl(\emptyset)$, then $a\in\cl(B)$, so we may assume this is not the case.} We then  cannot have that $a\ind B$, since 
     {clearly there} is no $a'\equiv_B a$ such that $a'\ind (B\cup G_B\cdot a)$, {and} $(\Omega,\cl)$ {is higher Neumann}. Hence, $a\nind B$, i.e.~there exists  $c\in\cl(a)\cap\cl(B)\setminus\cl(\emptyset)$. But, by exchange, $a\in\cl(c)$, and so $a\in\cl(B)$. Thus, $\acl(B)\subseteq\cl(B)$.}
\end{remark}

    


    

%% file: Sections/mixed.tex
\section{Proof of the Main Theorem}\label{sect:proof}

\subsection{Rough strategy and notation setup}
Throughout this section we work with a 
 {word} $w(x_1,\ldots,x_r)$ as in (\ref{eq:word}). 

Given such word, assuming regularity we will want to find group elements 
$g_1,\dots,  g_r\in G$ such that $w(g_1, \dots, g_r)\neq 1$. 
{Using the action $G\acts\Omega$,} 
this amounts to identifying  some $a_0\in\Omega$ such that $w(g_1, \dots, g_r)(a_0)\neq a_0$. In fact, {in order to obtain density 
of the non-solutions to $w(x_1,\ldots,x_r)=1$}, we will even want to show that $(g_1, \dots, g_r)$ may be found in an arbitrary non-empty basic open subset $G_{(\bar{b}_1, \bar{b_1')}}\times\dots\times  G_{(\bar{b}_r, \bar{b}_r')}$ of $G^r$. 

We will construct $g_1,\ldots,g_r$ inductively by considering longer and longer initial segments of $w$ (from the right). In the beginning, we know that each of the $g_i$ should belong to a given basic open set, which means that it is  determined on a finite set of elements; we call these constraints on the $g_i$ the   ``initial condition''. Suppose we have picked $g_1,\ldots,g_r$ satisfying the initial condition. We then pick an element $a_0$ and follow the values of $u(g_1, \dots, g_r)\cdot a_0$ as we apply increasingly longer initial segments $u$ of $w$; 
this gives rise to the notion of a ``sequence''. Clearly, to compute the sequence, one only needs finite information about the $g_i$. The central idea of our induction is to start with the initial condition, pick the value $a_0$ wisely, and to then increase the conditions on the $g_i$ step by step in such a way that the sequence stays as free as possible, allowing us to terminate  at a value different from $a_0$  when $w$ is regular.  
Figure~\ref{fig:sequence} shows a useful way to depict the trajectory of an element $a_0$ through $w(g_1,\ldots,g_r)$, and the intuition behind a sequence.

\begin{definition}\label{def:sequence} An  \textbf{initial condition} $B=((\bar{b}_1, \bar{b}_1'),\ldots,(\bar{b}_r, \bar{b}_r'))$ is a finite sequence consisting of pairs $(\bar{b}_i, \bar{b}_i')$  of finite {(possibly empty)}  tuples from $\Omega$ with $\bar{b}_i\equiv \bar{b}_i'$. 
 Let $a_0\in\Omega$. Let $w\in G* F_r$ be a 
 word as in (\ref{eq:word}) and $u\leq w$ be a {possibly empty} initial segment (from the right)  of $w$ of the form 
\begin{equation}\label{eq:subword}
   \gamma_\ell\ x_{\iota_\ell}^{\epsilon_\ell}\ \gamma_{\ell-1}\  x_{\iota_{\ell-1}}^{\epsilon_{\ell-1}}\ \dots\ \gamma_1\ x_{\iota_{1}}^{\epsilon_1}\ \gamma_0\;, 
\end{equation}
for $\ell\leq n$. 
A \textbf{sequence} for $u$ with initial condition $B$ and initial value $a_0$ 
is a sequence $\overline{a}:=(a_0, a_1, a_1'\dots, a_\ell, a_\ell', a^{\epsilon_{\ell+1}}_{\ell+1})$ of elements of $\Omega$  
satisfying the following conditions:
    \begin{enumerate}
    \item For each $i\in\{1, \dots, r\}$,  
    letting $j_1,\ldots,j_m$ enumerate all  those elements in 
    {$\{1, \dots, \ell\}$} where $\iota_{j_1}=\cdots=\iota_{j_m}=i$, 
    \[(\bar{b}_i,a_{j_1},\dots, a_{j_m})\equiv (\bar{b}_i',a'_{j_1},\dots, a'_{j_m})\;;\]

\item\label{it:badconv} setting the conventions that {$a_1^{\epsilon_1}=\gamma_0 a_0$ and } 
{$\epsilon_{n+1}=1$}, and     writing $a_j^{-1}$ for $a_j'$,  and $a_j^1$ for $a_j$, 
\[a_{j+1}^{\epsilon_{j+1}}=\gamma_j {a_j^{-\epsilon_{j}}}\;\]
for all {$j$ such that $1\leq j\leq \ell$}.
\end{enumerate}  
We call $a_{\ell+1}^{\epsilon_{\ell+1}}$ the \textbf{terminal value} of the sequence $\bar{a}$. 
\end{definition}

Intuitively, the first condition states that each $a_j'$ can be obtained from $a_j$ by applying an element of $G$; that these group  elements can be chosen to be the same for all positions $j$ where the same variable appears in $u$; and that that {uniform witness} can be chosen to satisfy the initial condition. The second item explains the relationship between $a_j$ and $a_{j+1}$, which are separated by $\gamma_j$ as we follow the trajectory of $a_0$ through $u$.

The following is a trivial consequence of Definition~\ref{def:sequence}: 
\begin{lemma}\label{lem:sequences} Let $B$ be {an} initial condition and let $a_0, a_{n+1}\in\Omega$. Let $w\in G*F_r$ be a 
{word} as in (\ref{eq:word}). Then, there are $(g_1, \dots, g_r)\in {\prod_{1\leq i\leq r} G_{(\bar{b}_i, \bar{b}_i')}}$ such that $w(g_1, \dots, g_r)(a_0)=a_{n+1}$ if and only if there is a sequence $\bar{a}$ for $w$ with initial condition $B$, initial value $a_0$, and terminal value $a_{n+1}$.
\end{lemma}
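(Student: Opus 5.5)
The plan is to prove both directions directly from Definition~\ref{def:sequence}, reading the sequence as the trajectory of $a_0$ through the letters of $w$.

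\emph{Forward direction.} Suppose $(g_1,\dots,g_r)\in\prod_i G_{(\bar b_i,\bar b_i')}$ satisfies $w(g_1,\dots,g_r)(a_0)=a_{n+1}$. Define the sequence recursively from the right.
\begin{itemize}
\item Set $a_1^{\epsilon_1}:=\gamma_0 a_0$.
\item Given $a_j^{\epsilon_j}$, let the other element of the pair be $a_j^{-\epsilon_j}:=g_{\iota_j}^{\epsilon_j}(a_j^{\epsilon_j})$. Equivalently, $a_j'=g_{\iota_j}(a_j)$ in both cases of the sign $\epsilon_j$.
\item Set $a_{j+1}^{\epsilon_{j+1}}:=\gamma_j a_j^{-\epsilon_j}$.
\end{itemize}
Condition~\ref{it:badconv} then holds by construction.

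For condition~1, the single element $g_i$ maps $\bar b_i\mapsto\bar b_i'$ and $a_{j_k}\mapsto a'_{j_k}$ for every $k$. This is exactly what the relation $\equiv$ requires. By induction on $j$, the element $a_{j+1}^{\epsilon_{j+1}}$ equals the image of $a_0$ under the initial segment of $w$ of length $j$. With the convention $\epsilon_{n+1}=1$, the terminal value is therefore $w(\bar g)(a_0)=a_{n+1}$.

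\emph{Backward direction.} Suppose a sequence $\bar a$ is given. For each $i$, condition~1 provides some $g_i\in G$ with $g_i(\bar b_i,a_{j_1},\dots,a_{j_m})=(\bar b_i',a'_{j_1},\dots,a'_{j_m})$. So $g_i\in G_{(\bar b_i,\bar b_i')}$ and $g_i a_j=a_j'$ at every position $j$ with $\iota_j=i$.

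We then show by induction on $j$ that the image of $a_0$ under the initial segment ending with $\gamma_j$ is $a_{j+1}^{\epsilon_{j+1}}$. The induction step uses $g_{\iota_j}^{\epsilon_j}(a_j^{\epsilon_j})=a_j^{-\epsilon_j}$ together with condition~\ref{it:badconv}. At $j=n$ this gives $w(\bar g)(a_0)=a_{n+1}$.

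The only point requiring care is the sign bookkeeping. One must check that the convention "$a_j^{-1}=a_j'$" makes $x^{-1}$ carry $a_j'$ to $a_j$, so that in both cases the relation $g_{\iota_j}a_j=a_j'$ is the correct one. Beyond this, everything is routine.
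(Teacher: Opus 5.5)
Your proof is correct and is essentially the paper's approach: the paper states this lemma as a trivial consequence of Definition~\ref{def:sequence} without writing out a proof, and your two directions (conversely, reading off the trajectory of $a_0$ under $w(g_1,\dots,g_r)$, and taking the witnesses of condition~1 as the $g_i$) are exactly the routine verification left implicit there. The sign check, that $g_{\iota_j}^{\epsilon_j}(a_j^{\epsilon_j})=a_j^{-\epsilon_j}$ holds if and only if $g_{\iota_j}a_j=a_j'$, is indeed the only point needing care, and you handle it correctly, including the convention $\epsilon_{n+1}=1$ for the terminal value.
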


\begin{notation} We naturally extend the notion of freeness rank 
to the elements of a sequence $\overline{a}$ for an initial segment $u$ of $w$ with initial condition $B$. Namely, if we imagine the sequence to arise by following the trajectory of $a_0$ through the word $u(g_1,\ldots,g_r)$ by applying increasingly long initial segments of $u(x_1,\ldots,x_r)$ (evaluated at $(g_1,\ldots,g_r)$) to $a_0$, then the freeness rank of an element of the sequence is the freeness rank of the initial segment at which the element appears in this trajectory. Note that since we do not require sequences to be injective, this initial segment  is actually not necessarily unique. However, the sequences we build will be ``rank-clean'' (see Definition~\ref{def:rankclean}), which implies that the freeness rank of 
all initial segments yielding the same value have the same freeness rank. 
Formally, let  $w$ be a word as in (\ref{eq:word}), let $u$ be an initial segment of $w$ as in~(\ref{eq:subword}), and let $(a_0, a_1, a_1'\dots, a_\ell, a_\ell', a^{\epsilon_{\ell+1}}_{\ell+1})$ be a sequence for $u$. Then we set  
\begin{align}
\rk(a_k^{\epsilon_k}) &= \rk(\gamma_{k-1}\ x^{\epsilon_{k-1}}_{\iota_{k-1}}\ \dots\  \gamma_1\ x_{\iota_1}^{\epsilon_1}\ \gamma_0)  &\text{ for all }  
{k\in\{1, \dots, \ell+1\}}
\text{ and }\\
\rk(a_k^{-\epsilon_k}) &=\rk(\gamma_{k}\ x^{\epsilon_k}_{\iota_k}\ \dots\  \gamma_1\ x_{\iota_1}^{\epsilon_1}\gamma_0) &\text{ for all }  {k\in\{1, \dots, \ell\}\;.}
\end{align}
setting by convention that $\rk(a_0)=1_{F_r}$. 
\end{notation}

\input{Pictures/figuresequence}

\begin{observation} Let $\overline{a}$ be a sequence for $w$ with initial condition $B$. A feature of our notation is that  {$\iota_j=\iota_k$ and} $\rk(a_j^\theta)=\rk(a_k^\theta)$ implies  $\rk(a_j^{-\theta})=\rk(a_k^{-\theta})$ for all {$1\leq j<k\leq n$ }
and all $\theta\in\{-1, 1\}$.  
\end{observation}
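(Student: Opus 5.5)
The proof is a direct computation in $F_r$. The plan is to show that, whatever the sign $\epsilon_k$, the freeness ranks of $a_k$ and $a_k'$ always differ by right multiplication by the single generator $x_{\iota_k}$. The Observation then follows by cancellation in the free group $F_r$.

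Fix $k\in\{1,\dots,n\}$ and write $p_{k-1}:=\gamma_{k-1}\, x_{\iota_{k-1}}^{\epsilon_{k-1}}\cdots\gamma_1\, x_{\iota_1}^{\epsilon_1}\,\gamma_0$, with $p_0=\gamma_0$. By the Notation, $\rk(a_k^{\epsilon_k})=\rk(p_{k-1})$ and $\rk(a_k^{-\epsilon_k})=\rk(\gamma_k\, x_{\iota_k}^{\epsilon_k}\, p_{k-1})$. Since $\rk$ is a homomorphism killing $G$, the latter equals $\rk(a_k^{\epsilon_k})\cdot x_{\iota_k}^{\epsilon_k}$, where the product is in $F_r$ and $x_{\iota_k}^{\epsilon_k}$ stands to the right in the paper's convention.

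Now split on $\epsilon_k$, recalling $a_k^{1}=a_k$ and $a_k^{-1}=a_k'$.
\begin{itemize}
\item If $\epsilon_k=1$, this reads $\rk(a_k')=\rk(a_k)\,x_{\iota_k}$.
\item If $\epsilon_k=-1$, it reads $\rk(a_k)=\rk(a_k')\,x_{\iota_k}^{-1}$, i.e.\ again $\rk(a_k')=\rk(a_k)\,x_{\iota_k}$.
\end{itemize}
So in all cases we have the uniform identity
\[
\rk(a_k^{-1})=\rk(a_k^{1})\,x_{\iota_k},\qquad\text{equivalently}\qquad \rk(a_k^{1})=\rk(a_k^{-1})\,x_{\iota_k}^{-1}.
\]

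Suppose now that $\iota_j=\iota_k=:i$ and $\rk(a_j^\theta)=\rk(a_k^\theta)$. If $\theta=1$, multiply both sides on the right by $x_i$. If $\theta=-1$, multiply on the right by $x_i^{-1}$. The uniform identity then gives $\rk(a_j^{-\theta})=\rk(a_k^{-\theta})$.

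The only point requiring care is the bookkeeping of the sign conventions $a^{\pm1}$ against $\epsilon_k$, and of which side the generator multiplies on. Once the uniform identity is established there is no real obstacle.
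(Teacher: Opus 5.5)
Your argument is correct and is the verification the paper leaves implicit: it states the Observation without proof as immediate from the rank notation, and later uses the same identity in the form $\rk(a_\ell')=x_1\cdot\rk(a_\ell)$. One slip needs fixing. Since $\rk$ is a homomorphism, $\rk(\gamma_k\,x_{\iota_k}^{\epsilon_k}\,p_{k-1})=x_{\iota_k}^{\epsilon_k}\,\rk(p_{k-1})$, so the new generator multiplies on the \emph{left}, not the right. Your uniform identity should therefore read $\rk(a_k')=x_{\iota_k}\,\rk(a_k)$, and the cancellation step should use left multiplication by $x_i^{\pm 1}$. That multiplication is equally bijective on $F_r$, so the conclusion is unaffected.
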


%% file: Pictures/figuresequence.tex
\begin{figure}
    \centering
    \begin{tikzpicture}[scale=1.3]

\node[anchor=south, inner sep=5pt, gray] at (5.5, 6.5){Rank $0$};
\node[anchor=south, inner sep=5pt, gray] at (3.5, 6.5){Rank $1$};
\node[anchor=south, inner sep=5pt, gray] at (1.5, 6.5){Rank $2$};
\node[anchor=south, inner sep=5pt, gray] at (-0.5, 6.5){Rank $3$};
\node[anchor=south, inner sep=5pt, gray] at (-2.5, 6.5){Rank $4$};

\draw[thick, dashed, gray] (4.5, 1)--(4.5, 7);
\draw[thick, dashed, gray] (2.5, 1)--(2.5, 7);
\draw[thick, dashed, gray] (0.5, 1)--(0.5, 7);
\draw[thick, dashed, gray] (-1.5, 1)--(-1.5, 7);

\draw[->, thick, Oxred] (6,6)--(5,6);
\draw[->, thick, Oxblue] (5,6)--(4,6);
\draw[->, thick, Oxred] (4,6)--(3,6);
\draw[->, thick, Oxblue] (3,6)--(2,6);
\draw[->, thick, Oxred] (2,6)--(1,6);
\draw[->, thick, Oxblue] (1,6)--(0,6);
\draw[->, thick, Oxred] (0,6)--(0,5);
\draw[->, thick, Oxblue] (1,5)--(0,5);
\draw[->, thick, Oxred] (1,5)--(2,5);
\draw[->, thick, Oxblue] (3,5)--(2,5);
\draw[->, thick, Oxred] (3,5)--(3,4);
\draw[->, thick, Oxblue] (3,4)--(2,4);
\draw[->, thick, Oxred] (2,4)--(1,4);
\draw[->, thick, Oxblue] (1,4)--(0,4);
\draw[->, thick, Oxred] (0,4)--(-1,4);
\draw[->, thick, Oxblue] (-1,4)--(-2,4);
\draw[->, thick, Oxred] (-2,4)--(-2,3);
\draw[->, thick, Oxblue] (-1,3)--(-2,3);
\draw[->, thick, Oxred] (-1,3)--(0,3);
\draw[->, thick, Oxblue] (1,3)--(0,3);
\draw[->, thick, Oxred] (1,3)--(1,2);
\draw[->, thick, Oxblue] (1,2)--(0,2);
\draw[->, thick, Oxred] (0,2)--(-0.5, 1.5);
\filldraw (6, 6) circle (1pt) node[anchor=south, inner sep=5pt] {\normalsize$a_0$};
\node[anchor=south, inner sep=5pt, Oxred] at (5.5, 6){\normalsize$\gamma_0$};
\filldraw (5, 6) circle (1pt) node[anchor=south, inner sep=5pt] {\normalsize$a_1$};

\filldraw (4, 6) circle (1pt) node[anchor=south, inner sep=5pt] {\normalsize$a_1'$};
\node[anchor=south, inner sep=5pt, Oxred] at (3.5, 6){\normalsize$\gamma_1$};
\filldraw (3, 6) circle (1pt) node[anchor=south, inner sep=5pt] {\normalsize$a_2$};

\filldraw (2, 6) circle (1pt) node[anchor=south, inner sep=5pt] {\normalsize$a_2'$};
\node[anchor=south, inner sep=5pt, Oxred] at (1.5, 6){\normalsize$\gamma_2$};
\filldraw (1, 6) circle (1pt) node[anchor=south, inner sep=5pt] {\normalsize$a_3$};

\filldraw (0, 6) circle (1pt) node[anchor=east, inner sep=5pt] {\normalsize$a_3'$};
\node[anchor=east, inner sep=5pt, Oxred] at (0, 5.5){\normalsize$\gamma_3$};
\filldraw (0, 5) circle (1pt) node[anchor=east, inner sep=5pt] {\normalsize$a_4'$};
\filldraw (1, 5) circle (1pt) node[anchor=south, inner sep=5pt] {\normalsize$a_4$};
\node[anchor=south, inner sep=5pt, Oxred] at (1.5, 5){\normalsize$\gamma_4$};
\filldraw (2, 5) circle (1pt) node[anchor=south, inner sep=5pt] {\normalsize$a_5'$};

\filldraw (3, 5) circle (1pt) node[anchor=west, inner sep=5pt] {\normalsize$a_5$};
\node[anchor=west, inner sep=5pt, Oxred] at (3, 4.5){\normalsize$\gamma_5$};
\filldraw (3, 4) circle (1pt) 
node[anchor=west, inner sep=5pt] {\normalsize$a_6$};
\filldraw (2, 4) circle (1pt) 
node[anchor=south, inner sep=5pt] {\normalsize$a_6'$};
\node[anchor=south, inner sep=5pt, Oxred] at (1.5, 4){\normalsize$\gamma_6$};
\filldraw (1, 4) circle (1pt) 
node[anchor=south, inner sep=5pt] {\normalsize$a_7$};
\filldraw (0, 4) circle (1pt) 
node[anchor=south, inner sep=5pt] {\normalsize$a_7'$};
\node[anchor=south, inner sep=5pt, Oxred] at (-0.5, 4){\normalsize$\gamma_7$};
\filldraw (-1, 4) circle (1pt) 
node[anchor=south, inner sep=5pt] {\normalsize$a_8$};
\filldraw (-2, 4) circle (1pt) 
node[anchor=east, inner sep=5pt] {\normalsize$a_8'$};
\node[anchor=east, inner sep=5pt, Oxred] at (-2, 3.5){\normalsize$\gamma_8$};
\filldraw (-2, 3) circle (1pt) 
node[anchor=east, inner sep=5pt] {\normalsize$a_9'$};
\filldraw (-1, 3) circle (1pt) 
node[anchor=south, inner sep=5pt] {\normalsize$a_9$};
\node[anchor=south, inner sep=5pt, Oxred] at (-0.5, 3){\normalsize$\gamma_9$};
\filldraw (0, 3) circle (1pt) 
node[anchor=south, inner sep=5pt] {\normalsize$a_{10}'$};
\filldraw (1, 3) circle (1pt) 
node[anchor=west, inner sep=5pt] {\normalsize$a_{10}$};
\node[anchor=west, inner sep=5pt, Oxred] at (1, 2.5){\normalsize$\gamma_{10}$};
\filldraw (1, 2) circle (1pt) 
node[anchor=west, inner sep=5pt] {\normalsize$a_{11}$};
\filldraw (0, 2) circle (1pt) 
node[anchor=south, inner sep=5pt] {\normalsize$a_{11}'$};
\node[anchor=west, inner sep=5pt, Oxred] at (-0.3, 1.7){\normalsize$\gamma_{11}$};
\filldraw (-0.5, 1.5) circle (1pt) 
node[anchor=east, inner sep=5pt] {\normalsize$a_{12}$};

 
\end{tikzpicture}
   
    \caption{Visual representation of the sequence $(a_0, \dots, a_{12})$ for the one-variable word
    \begin{equation*}
w(x)=\gamma_{11}x\gamma_{10}x^{-1}\gamma_9x^{-1}\gamma_8x\gamma_7x\gamma_6x\gamma_5x^{-1}\gamma_4x^{-1}\gamma_3x\gamma_2x\gamma_1x\gamma_0   
    \end{equation*}
    evaluated at $g\in G$. {The labelled red arrows indicate the action of constants from the word.} The unlabelled blue arrows, always going right-to-left, indicate the action of $g$ on elements of the sequence. When the arrows cross the gray dotted line, this indicates a change in the freeness rank of the corresponding initial segment of the word $w$ {(equivalently, of the corresponding element of the sequence).}
    }
    \label{fig:sequence}
\end{figure}

%% file: Sections/Maintheorem.tex
\subsection{The strategy in more detail, and the actual proof}


{Throughout this section we work with a group action  $G\acts\Omega$ and a 
 closure operator $\cl$ on $\Omega$ such that $(\Omega, \cl)$ forms a $G$-invariant higher Neumann infinite-dimensional modular pregeometry.}

The core idea of our proof is that for a  
{word}  $w$ and initial condition $B$, we inductively build a sequence $\overline{a}$ for $w$ whose  elements of different freeness rank are sufficiently independent from each other. If $w$ is regular, this then yields $a_0\neq a_{n+1}$ since their freeness ranks differ. Hence,  $w$ does not yield a mixed identity, and this is witnessed by group elements belonging to the basic open sets as given by the initial condition. 

Naively, one would be tempted to establish said independence condition  at every step, without looking forward in the future trajectory of the sequence: just pick the next element of the sequence to be  independent from all previous elements of different freeness rank. 
The main issue that arises is that sometimes later elements in our sequence may be entirely determined by previous elements. This is  due to the presence of critical constants.  To see this, consider the context of Figure~\ref{fig:sequence}, and suppose that we have already built the sequence $(a_0, \dots, a_3, a_3', a_4')$. It may be that $\gamma_3$ fixes $a_3'$ and so $a_4=a_3$ and $a_5'=\gamma_4\gamma_2 a_2'$. By our desired independence condition, we want $a_5'$ to still be independent from $a_0$. So, for this to be the case, when we constructed $a_2'$, we must have already ensured that $\gamma_4\gamma_2 a_2'$ is independent from $a_0$. This in itself can already be ensured by Neumann's lemma. When more than one critical constant occurs, we need to deal with additional issues which require the higher Neumann property. For example, consider the case where we have built the sequence $(a_0, \dots, a_7)$, again in the context of Figure~\ref{fig:sequence}. By construction, $(a_1, \dots, a_5)\equiv (a_1',\dots, a_5')$. We could have that $\gamma_5 a_5=a_6\in {\cl}(\{a_2,  a_5\})$ (we cannot ensure independence amongst elements of the same freeness rank since  some singular words might be mixed identities). But then, this would mean that $a_7'\in {\cl}(\{\gamma_6 a_5',\gamma_6 a_2\})$. So in order to ensure that $a_7'\neq a_0$, we need to also ensure that the 
{closure} of translates of elements of a given rank is still independent from elements of other ranks. The definition of higher Neumann is designed to ensure the satisfaction of independence conditions of this kind. In order to formally state the sophisticated independence conditions we need in our induction (which we shall call having a rank-clean sequence), below we define the notions of paths, foreseeable future, and foreseeable closures.

\begin{definition}\label{def:rankclean}
Let $w\in G*F_r$ be a 
{word}  of the form (\ref{eq:word}),  and let {$j\leq n+1$}. Then, the set of \textbf{paths} at stage $j$ is given by
\[\mathfrak{P}(w, j):=\{1\}\cup\{\gamma_{i_0}^{\delta_0}\cdots \gamma_{i_k}^{\delta_k} \ \vert \ 
i_0, \dots, i_k\geq j\;
\text{ are distinct and }\; \delta_0, \dots, \delta_k\in\{-1,1\} 
\}\;.\]
Note that these sets are finite and decrease as $j$ increases, {with $\mathfrak{P}(w, n+1)=\{1\}$}. 
Let ${\overline{a}}:=(a_0,a_1,a_1',\ldots,a_{\ell-1},a_{\ell-1}',a_\ell)$ be a sequence for an initial segment of  $w$, and let $a_i^\epsilon$ be an element of this sequence. The \textbf{foreseeable future} of $a_j^\epsilon$ in the word $w$  at stage $j$, denoted by $F(a_j^\epsilon, j)$, 
is the set of all shifts of $a_j^\epsilon$ by paths at stage $j$ (we add the index  $j$ as a parameter since the same element could appear multiple times in the sequence, and this cannot be avoided): 
\[F(a_j^\epsilon ,j):=\mathfrak{P}(w, j) \cdot  a_j^\epsilon\;.\]

{Let $X$ be a set of elements from the sequence $\overline{a}$. The \textbf{foreseeable closure} of $X$ is the 
 {closure} of the foreseeable futures of elements in $X$.}
We denote this by 
\[FC(X):={\cl}\left(\bigcup_{a_j^\epsilon \in X} F(a_j^\epsilon, j)\right).\]
{(Again, with a slight abuse of notation we record the position of the elements of $X$ in order to allow for repeated elements in the sequence to be counted as distinct in $X$.)}

The sequence $\overline{a}$ for the initial segment $u$ of $w$ and with initial condition $B$ is \textbf{rank-clean}  {if} 
\begin{itemize}
    \item no element of $\overline{a}$ is contained in ${\cl}(\emptyset)$,
    \item 
    {the foreseeable closure of the set of all elements of $\overline{a}$}  
     is independent from $B$, and
    \item for any partition $(P,Q)$ of the (finite) set of all freeness ranks of elements of $\overline{a}$, the foreseeable closure of elements with rank in $P$ is independent over $B$ from the foreseeable closure of elements with rank in $Q$.
\end{itemize}
Here, we are {being slightly informal} 
 since an initial condition is a sequence of pairs of tuples and not a set, but in this {context} we refer to the set of all elements that appear in it. Note that {the first condition is equivalent to $a_0$ not belonging to  ${\cl}(\emptyset)$ since
${\cl}$ is G-invariant.}
\end{definition}

\begin{lemma}\label{lem:basecase} {Let $G\acts \Omega$ be a group action and let  $(\Omega, {\cl})$ be a {$G$-invariant} higher Neumann  infinite-dimensional  pregeometry.} Let $w$ be a 
{word} and $B$ be {an} initial condition. Then, there  is a one-element rank-clean sequence $(a_0)$ with initial condition $B$ for the initial segment $\emptyset$ of $w$.
\end{lemma}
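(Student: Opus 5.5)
We need a one-element sequence $(a_0)$ for the empty initial segment that is rank-clean. Since it has a single element, all the conditions are easy to satisfy except one. The first condition requires $a_0\notin\cl(\emptyset)$. The third condition, about partitions of the set of freeness ranks, is vacuous: only the rank $1_{F_r}$ occurs, so one side of any partition is empty, and the empty set is independent from anything over $B$ by existence and normality. So the real content is the second condition, $FC(\{a_0\})\ind B$, where $B$ is read as the finite set of entries of the initial condition. Here $FC(\{a_0\})=\cl(\mathfrak{P}(w,0)\cdot a_0)$, and $\mathfrak{P}(w,0)$ is a finite subset of $G$.

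The plan is to obtain $a_0$ directly from the higher Neumann property, applied to $\Sigma:=\mathfrak{P}(w,0)$ and $C:=B$.

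\emph{Step 1: find a starting point.} We need some $a\in\Omega$ with $a\ind\emptyset$ and $a\notin\cl(\emptyset)$. Since $(\Omega,\cl)$ is infinite-dimensional, $\cl(\emptyset)\neq\Omega$, so pick any $a\in\Omega\setminus\cl(\emptyset)$. Independence over the empty set is trivial: $\cl(a)\cap\cl(\emptyset)=\cl(\emptyset)$.

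\emph{Step 2: apply higher Neumann.} Apply Definition~\ref{def:higherneumann} with the finite parameter set $\emptyset$ in place of ``$B$'', the element $a$, the finite set $C:=B$, and $\Sigma$ as above. This yields $a_0\equiv_\emptyset a$ such that $\Sigma\cdot a_0\ind B$. By definition of $\ind$ this says $\cl(\Sigma\cdot a_0)\cap\cl(B)=\cl(\emptyset)$, which is exactly $FC(\{a_0\})\ind B$.

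\emph{Step 3: check the remaining condition.} Since $a_0=g a$ for some $g\in G$, $G$-invariance of $\cl$ gives $g\cdot\cl(\emptyset)=\cl(\emptyset)$, and so $a_0\notin\cl(\emptyset)$. Finally, $(a_0)$ is indeed a sequence for the empty segment with initial condition $B$. The only requirement in that case is that the tuples $\bar b_i\equiv\bar b_i'$ appearing in $B$ are in the same orbit, which holds by the definition of an initial condition.

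The only mildly delicate point, and the expected obstacle, is reading the independence conditions correctly: ``independent from $B$'' is meant over $\emptyset$, and ``independent over $B$'' in the partition clause must be seen to hold trivially when one side is empty. Both follow from Fact~\ref{fact:axioms}. Note that modularity is not needed for this base case, which matches the hypotheses of the lemma.
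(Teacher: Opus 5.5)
Your proof is correct and follows the paper's argument: pick $a\notin\cl(\emptyset)$ and apply higher Neumann with $\Sigma=\mathfrak{P}(w,0)$ and $C=B$ to obtain $FC(\{a_0\})\ind B$, with the partition clause vacuous since only the rank $1_{F_r}$ occurs. The one harmless difference is that you take the parameter set in higher Neumann to be $\emptyset$ rather than $B$. This spares you from first producing $a\ind B$, which the paper gets via infinite-dimensionality and exchange, and loses nothing because $a_0\equiv_B a$ is never used afterwards; you also spell out the check $a_0\notin\cl(\emptyset)$ via $G$-invariance, which the paper leaves implicit.
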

\begin{proof} 
{Since $(\Omega,{\cl})$ is infinite-dimensional, there exists $a\in\Omega {\setminus {\cl}(\emptyset)}$ such that $a\ind B$ (here we apply the above-mentioned abuse of notation and really refer to the set of  elements appearing in $B$). Since $(\Omega,{\cl})$ is higher Neumann,} we can pick $a_0\equiv_B a$ such that
\[\mathfrak{P}(w, 0)\cdot  a_0\;\ind\;  B\;.\]
In particular, 
\[FC(\{a_0\})\ind B\;,\]
and this is the only independence condition {in addition to $a_0\notin {\cl}(\emptyset)$} we are requiring of a rank-clean sequence with initial condition $B$ for $\emptyset\leq w$ (since at this stage the only rank witnessed by our sequence consists of $1_{F_r}$). 
\end{proof}

\begin{lemma}\label{lem:hardinduction} 
{Let $G\acts \Omega$ be a group action and let $(\Omega, {\cl})$ be a $G$-invariant  higher Neumann infinite-dimensional modular pregeometry.} 
Let $w$ be a 
word and let $B$ be {an} initial condition. 
 Let $(a_0)$ be a one-element rank-clean sequence over $B$ for the initial segment $\emptyset$ of $w$. 
 Then, there is a rank-clean  sequence $\overline{a}$ for $w$ with initial condition $B$ and initial value $a_0$.  
\end{lemma}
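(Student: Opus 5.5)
We build the sequence by induction on the length $\ell$ of the initial segment $u\leq w$. The inductive claim is that for every $\ell\leq n$ there is a rank-clean sequence $(a_0,a_1,a_1',\ldots,a_\ell,a_\ell',a_{\ell+1}^{\epsilon_{\ell+1}})$ for the initial segment of length $\ell$, with initial condition $B$ and initial value $a_0$. The base case $\ell=0$ is given by hypothesis: by convention $a_1^{\epsilon_1}=\gamma_0a_0$ is forced. This element lies in $F(a_0,0)$, so it adds nothing to the foreseeable closures.

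For the inductive step, suppose the sequence is built up to $a_{\ell+1}^{\epsilon_{\ell+1}}=:c$, and write $i=\iota_{\ell+1}$. We must choose the partner $d=a_{\ell+1}^{-\epsilon_{\ell+1}}$ so that condition (1) of Definition~\ref{def:sequence} holds. Up to swapping roles according to the sign, this condition reads
\[(\bar b_i,\bar e,c)\equiv(\bar b_i',\bar e',d),\]
where $\bar e,\bar e'$ list the earlier pairs $a_j,a_j'$ with $\iota_j=i$. We then set $a_{\ell+2}^{\epsilon_{\ell+2}}=\gamma_{\ell+1}d$.

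\textbf{Case 1: $c$ is a translate already recorded.} Suppose $c=a_j^{\theta}$ for some earlier $j$ with $\iota_j=i$ and $\theta$ matching the sign; this happens exactly when $\gamma_\ell$ is critical and the relevant earlier element reappears. Then we are forced to take $d=a_j^{-\theta}$. Its rank equals the rank of the corresponding earlier element by the Observation. By freeness-rank bookkeeping, the element $\gamma_{\ell+1}d$ lies in $F(a_j^{-\theta},j)$, since $\gamma_{\ell+1}$ composed with the path used at stage $j$ is still a path: the indices are distinct because they are $\geq \ell+1>j$. So the foreseeable closures only shrink or stay equal, and rank-cleanness persists. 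The precise bookkeeping of which path produces which element is exactly why $\mathfrak P(w,j)$ allows products of distinct later constants with signs.

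\textbf{Case 2: $c$ is not determined in this way.} Then the rank of $c^{\pm}$ after applying $x_i^{\pm1}$ is new or is extended, and we use the higher Neumann property. Let $E=\bar b_i\cup\bar e$. We first check that $c\ind_{\ldots}$ the relevant data, so that the type of $c$ over $\bar b_i\bar e$ can be transported by some $h\in G$ with $h(\bar b_i,\bar e)=(\bar b_i',\bar e')$. Put $d_0=h c$. The type of $d_0$ over $\bar b_i'\bar e'$ is then correct, and we take $d\equiv_{\bar b_i'\bar e'}d_0$ chosen by higher Neumann. We apply it with finite $\Sigma=\mathfrak P(w,\ell+1)$ and $C$ a finite set whose closure is the foreseeable closure of all earlier elements together with $B$ (this is finite-dimensional since everything is generated by finitely many points). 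This gives $\Sigma\cdot d\ind C$, and we may additionally require $d\notin\cl(\emptyset)$ by $G$-invariance.

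Before invoking higher Neumann we need $d_0\ind \bar b_i'\bar e'$ in the sense of the definition. That is, we must reduce to a base over which $d_0$ is independent, which means separating the part of $d_0$ in $\cl(\bar b_i'\bar e')$. We handle this by using modularity (Fact~\ref{fact:equivalentsmodular}) to decompose $c$ as lying in $\cl(\{c_1,c_2\})$ with $c_1\in\cl(E)$ and $c_2$ independent of $E$. Only the independent part is moved, and the dependent part is carried along by $h$ onto the already-controlled foreseeable closure of $\bar e'$.

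Finally we must verify that the three rank-clean conditions hold for the extended sequence. For every partition $(P,Q)$ of ranks, the new foreseeable closure $FC(\{d\})$ together with $FC(\{c\})$ must be placed on the side of its rank without breaking independence over $B$. This is done with Lemma~\ref{lem:modularmoving}: we take $U$ to be the old foreseeable closure on one side and $V$ the other side, and $C$ to be $FC(\{d\})$, using $C\ind U\cup V$ from higher Neumann to move $C$ to whichever side is required. Base monotonicity and transitivity (Fact~\ref{fact:axioms}) handle relativising over $B$. If the new element has the same rank as earlier elements (rank reduction caused by cancellation), we need $FC(\{d\})$ to be contained in the closure of the correct side. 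This uses Fact~\ref{fact:modularity} to rule out new elements falling in the sum of two independent closures without lying in either.

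The main obstacle is this last verification, together with the decomposition in Case 2. The task is to show that the part of $d$ forced by $\bar e'$ lies in the foreseeable closure of elements of the same rank, so that independence across ranks is inherited. I would first try to prove by induction the auxiliary invariant that for each variable $i$, $\cl(\bar b_i\cup\bar e)$ and $\cl(\bar b_i'\cup\bar e')$ decompose rank-by-rank as sums of closures of foreseeable futures, and then derive the partition condition from it.
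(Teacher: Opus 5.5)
\textbf{There is a genuine gap: your case split is on the wrong condition.} You separate the situation where $c$ literally equals an earlier element $a_j^\theta$, and send everything else to Case~2, where you want to use higher Neumann. But higher Neumann, after transporting by a witness $h$, needs $c\ind \bar b_i\cup\bar e$. This fails whenever $c\in\cl(\bar b_i\cup\bar e)$ without being equal to any earlier element, e.g.\ $c\in\cl(\{a_j,a_k\})$ for two earlier inputs of $x_i$, which is exactly what critical constants produce.

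Your proposed repair does not work. You decompose $c\in\cl(\{c_1,c_2\})$ and want to move only the independent part. However, $d$ must be a single point $hc$ with $h(\bar b_i,\bar e)=(\bar b_i',\bar e')$. Since $c\notin\cl(\emptyset)$, exchange gives two exhaustive possibilities:
\begin{itemize}
\item if $c\notin\cl(E)$, then already $c\ind E$, so no decomposition is needed;
\item if $c\in\cl(E)$, then $d\in\cl(\bar b_i'\cup\bar e')$ for \emph{every} admissible $h$, so there is nothing left for higher Neumann to choose.
\end{itemize}
Moreover, landing in $\cl(\bar b_i'\cup\bar e')$ is not enough for rank-cleanness. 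Indeed $\bar b_i'$ is part of $B$, and $\bar e'$ contains elements of other freeness ranks. Your closing paragraph names precisely this as the ``main obstacle'' but leaves it open.

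\textbf{The right dichotomy is $c\in\cl(\bar b_i\cup\bar e)$ versus not.} In the independent case, your higher Neumann step is what the paper does: take $\Sigma=\mathfrak P(w,\ell+1)$, take $C$ generating $B\cup FC(S)$, and apply Lemma~\ref{lem:modularmoving} relativised to $\cl_B$. In that case no containment is needed, even when the new rank coincides with an old one.

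\textbf{The missing idea is the dependent case:} you must show that $c\in\cl(X)$. Here $X$ is the set of earlier elements on the same side of $x_i$ as $c$ \emph{and of the same freeness rank as $c$}. Let $Y$ be the analogous earlier elements of other ranks.
\begin{itemize}
\item The inductive rank-cleanness gives $\{c\}\cup X\ind_B Y$ and $\{c\}\cup X\ind B$. By transitivity, $\{c\}\cup X\ind B\cup Y$.
\item Since $E\subseteq B\cup X\cup Y$, Fact~\ref{fact:modularity} applied to the closed sets $\cl(X)$ and $\cl(B\cup Y)$ forces $c\in\cl(X)\cup\cl(B\cup Y)$.
\item The option $c\in\cl(B\cup Y)$ would put $c$ in $\cl(\emptyset)$ by the independence above, which is excluded. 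Hence $c\in\cl(X)$.
\end{itemize}
Now take an arbitrary witness $h$. Then $d\in\cl(hX)$, and every element of $hX$ has rank $\rk(d)$, because passing through $x_i$ changes equal ranks by the same letter. Since $\mathfrak P(w,\ell+1)\subseteq\mathfrak P(w,j)$ for $j\leq\ell$ and $\cl$ is $G$-invariant, both $F(d,\ell+1)$ and $F(\gamma_{\ell+1}d,\ell+2)$ lie in the foreseeable closure of the already existing elements of that rank. So no foreseeable closure changes, and rank-cleanness is inherited verbatim.

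Your literal-equality Case~1 is just the special case $c\in X$. Note also that the equality of ranks there comes from rank-cleanness, not from the Observation. The rank-by-rank decomposition invariant you propose to prove is unnecessary: this one-step consequence of rank-cleanness plus modularity is what closes the induction.
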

\begin{proof}
    We prove by induction over the length of an initial segment $u$ of $w$ that we can find a rank-clean sequence for $u$ with initial condition $B$ and initial value $a_0$. Clearly, for $u=w$ this then yields the lemma. Since the initial condition $B$ and the initial value $a_0$ are  fixed throughout the proof, we shall just talk about  sequences for initial segments of $w$, leaving it implicit that these have initial condition $B$ and initial value $a_0$.

    The base case of 
    $u=\gamma_0$ 
    is taken care of in the assumption that  the one-element sequence $(a_0)$ for the empty initial segment is rank-clean for the initial condition $B$. {To see this, note that since  $a_1^{\epsilon_1}=\gamma_0 a_0$, we have  $\rk(a_0)=\rk(a_1^{\epsilon_1})$ and $\mathrm{FC}(\{a_0\})=\mathrm{FC}(\{a_0, a_1^{\epsilon_1}\})$.}

    In the induction step, suppose that  $u=\gamma_\ell\  x_{\iota_\ell}^{\epsilon_\ell}\ v$, where the induction hypothesis holds for $v$. By renaming variables, we may assume that  $\iota_\ell=1$. 
     We discuss the case $\epsilon_\ell=1$ and leave  the case $\epsilon_\ell=-1$ to the reader.\footnote{The proof of the case with $\epsilon_\ell=-1$ is identical to the one we proceed to give except for some minor modifications. We briefly sketch these here so that the reader may apply these when verifying this other case: when defining a rank-clean sequence for $v$ like we do in the line of the proof following this footnote, its terminal element is $a'_\ell$ instead of $a_\ell$ due to our choice of notation. Hence, in Case 1, when extending the sequence to a new element in (\ref{eq:extend1}), this is $a_\ell=\delta^{-1} a_\ell'$ on the left hand side of the equation. Similarly, when extending the sequence in Case 2, instead of finding $a''_\ell=\delta a_\ell$ where $\delta$ is a witness of (\ref{eq:extend2}), we are finding $a^\star_\ell=\delta^{-1}a_\ell'$, where $\delta$ is a witness of (\ref{eq:extend2}). The rest of the computations are the same by switching the superscript $-1$ and $1$ and $''$ by $^\star$.} Let $(a_0,a_1,a_1', \dots,a_{\ell-1}, a_{\ell-1}', a_\ell)$ be a  rank-clean sequence for $v$ with initial condition $B$. We distinguish two cases:
    \begin{itemize}
    \item Case 1: The final element $a_\ell$ is contained in the 
    {closure} of previous elements to which the same variable $x_1$ is applied in $w$ (together with $B$); that is,  
    \[a_\ell\in {\cl}\left(B\cup \{a_j\;|\; j<\ell,\; \iota_j=1\}\right)\; ; \]
    \item Case 2: this is not the case.
\end{itemize}

We begin by considering \textbf{Case~1}.

\textbf{Claim~1.1:} $a_\ell$ is contained in the 
{closure} of the set 
$$X:=\{a_j\;|\; j<\ell,\; \iota_j=1,\; \rk(a_j)=\rk(a_\ell)\}\; .$$ 
\begin{subproof}[Proof of Claim~1.1] Write  $$Y:=\{a_j\;|\; j<\ell,\; \iota_j=1,\; \rk(a_j)\neq \rk(a_\ell)\}\; .$$ 
By inductive hypothesis, $\{a_\ell\}\cup X\;\ind_B\; Y$ and $\{a_\ell\}\cup X\;\ind\; B$. Thus, by transitivity in the sense of Fact~\ref{fact:axioms}, 
\begin{equation}\label{eq:claim1indep}
  \{a_\ell\}\cup X\; \ind\; B\cup Y \;. 
\end{equation} 
Consider  Fact~\ref{fact:modularity}, substituting $A$ by ${\cl}(X)$, $B$ by ${\cl}(B\cup Y)$, and $a$ by $a_\ell$ in its statement. We then know that the conclusion of that  fact does not hold, by~(\ref{eq:claim1indep}). We also know that $a_\ell\in {\cl}({\cl}(X)\cup {\cl}(B\cup Y))$ by the assumption for Case~1. Hence, the fact implies $a_\ell\in {\cl}(X)\cup {\cl}(B\cup Y)$. Again by~(\ref{eq:claim1indep}), the only way for this to happen is $a_\ell\in {\cl}(X)$.


\end{subproof}


Set $a_\ell':=\delta a_\ell$ for an arbitrary witness $\delta$ of 
\begin{equation}\label{eq:extend1}
    (\bar{b}_1,a_{m_1},\dots, a_{m_s}) 
    \equiv (\bar{b}_1',a'_{m_1},\dots, a'_{m_s}) 
    \;,
\end{equation}
where $(m_1, \dots, m_s)$ is the tuple 
of indices denoting the elements of the sequence prior to $a_\ell$ to which the variable $x_1$ was applied (i.e.~they enumerate the indices {$1\leq j < \ell$} 
for which $\iota_j=1$). Setting 
$$X':=\{a'_j\;|\; j<\ell,\; \iota_j=1,\; \rk(a'_j)=\rk(a'_\ell)\}\; $$ 
we have $a_\ell'\in {\cl}(X')$ since  $a_\ell\in {\cl}(X)$ by Claim~1.1, 
$X'=\delta(X)$, and by the $G$-invariance of ${\cl}$. 
 Now set $a_{\ell+1}:=\gamma_\ell a_\ell'$. Then $(a_0,a_1,a_1',\ldots,a_\ell,a_\ell',a_{\ell+1})$ is a sequence for $u$ with initial condition $B$ by construction. We want to prove that it is a rank-clean sequence for $u$; this will be Claim~1.3 below.\\ 
 

\textbf{Claim~1.2:} Let $P$ be any set of freeness ranks which contains  $\rk(a_{\ell+1})$. For the set\\ $S:=\{a_0,a_1,a_1', \dots,a_{\ell-1}, a_{\ell-1}', a_\ell\}$ we have:
\begin{equation}\label{eq:foreseeable}
    FC(\{x\; \vert\;   x\in S\cup \{a_\ell', a_{\ell+1}\}, \rk(x)\in P\})=FC(\{x\;\vert\; x\in S, \rk(x)\in P\})\;.
\end{equation}
\begin{subproof}[Proof of Claim~1.2]
Write $U$ and $U^\star$ for the sets on the left and right hand sides of equation (\ref{eq:foreseeable}). The inclusion $U^\star\subseteq U$ is obvious, so we only need to prove that $U\subseteq U^\star$. 
We shall prove that $F(a_\ell', \ell)\subseteq U^\star$ and $F(a_{\ell+1}, \ell+1)\subseteq U^\star$. From this, the above equation, and transitivity of 
{closure} we obtain that $U\subseteq U^\star$. 

To see that $F(a_\ell', \ell)\subseteq U^\star$ note that
  \[F(a_\ell', \ell)\;=\;\mathfrak{P}(w, \ell)\cdot a_\ell'
    \;\subseteq\; \mathfrak{P}(w, \ell)\cdot {\cl}(X')\;{\subseteq}
    \; {\cl}(\mathfrak{P}(w, \ell)\cdot  X')\;\subseteq \;U^\star\;.\]
Here the first equality is by definition; the  first containment follows from the consequence of Claim~1.1 that $a_\ell'\in{\cl}(X')$;  the {next} {containment}
is just a consequence of the {$G$-invariance of $\cl$. The}
final containment follows from the fact that $$\rk(a_\ell')=x_1\cdot\rk(a_\ell)=\rk(a_{\ell+1})\; ,$$ implying that all elements in $X'$ have rank in $P$, and the definition of foreseeable closure. 

To see that $F(a_{\ell+1}, \ell+1)\subseteq U^\star$, note that 
\[F(a_{\ell+1}, \ell+1)\;=\;\mathfrak{P}(w, \ell+1)\cdot  a_{\ell+1}\;=\;\mathfrak{P}(w, \ell+1)\cdot \gamma_\ell a_\ell'\;\subseteq\; \mathfrak{P}(w, \ell) \cdot a'_\ell\;=\; F(a_\ell', \ell)\;\subseteq\; U^\star\;.\]
In this case, the first two equalities are by definition; the  containment thereafter follows from the fact that any product of the form $\alpha\gamma_\ell$ for $\alpha\in \mathfrak{P}(w, \ell+1)$ will be contained in $\mathfrak{P}(w, \ell)$; the next equality is by definition; and the final containment was just proven above. 

\end{subproof}

\textbf{Claim~1.3:} $(a_0,a_1,a_1',\ldots,a_\ell,a_\ell',a_{\ell+1})$ is  a rank-clean sequence for $u$.
\begin{subproof}[Proof of Claim~1.3]
Let $(P, Q)$ be any partition of the freeness ranks occurring in the sequence. We may assume without loss of generality that $P$ contains $\rk(a_\ell')=x_1\cdot \rk(a_\ell)=\rk(a_{\ell+1})$. 
Let $S$ be as in Claim~1.2. Since by inductive hypothesis the sequence  $(a_0,a_1,a_1', \dots,a_{\ell-1},a_{\ell-1}', a_\ell)$ is rank-clean,
\[FC(\{x\;\vert\; x \in S,\; \rk(x)\in P\})\ind_B FC(\{x\;\vert\; x \in S,\; \rk(x)\in Q\})\;.\]
However, since $\rk(a_\ell')=\rk(a_{\ell+1})\in P$, 
\[FC(\{x\;\vert\; x \in S,\; \rk(x)\in Q\})=FC(\{x\;\vert\; x \in S\cup\{a_\ell', a_{\ell+1}\},\; \rk(x)\in Q\})\;,\]
and by Claim~1.2 
   \[FC(\{x\;\vert\; x \in S,\; \rk(x)\in P\})=FC(\{x\;\vert\; x \in S\cup\{a_\ell', a_{\ell+1}\},\; \rk(x)\in P\})\;.\]
   Hence, we can deduce that 
   \[FC(\{x\;\vert\; x \in S\cup\{a_\ell', a_{\ell+1}\},\; \rk(x)\in P\})\ind_B FC(\{x\;\vert\; x \in S\cup\{a_\ell', a_{\ell+1}\},\; \rk(x)\in Q\})\;,\]
   as required by the definition of rank-cleanness. For the same reason we also have that 
   \[FC(\{x\;\vert\; x \in S\cup\{a_\ell', a_{\ell+1}\}\}
   {)\ind B}\;.\]
   This concludes the argument for Claim~1.3. 
\end{subproof}

We have thus finished the proof of our induction for Case 1.\\

We now consider \textbf{Case~2}. Since $(\Omega, {\cl})$ forms a pregeometry, we know that for all  $a,b\in\Omega$, $a\in{\cl}(b){\setminus{\cl}(\emptyset)}$ if and only if $b\in{\cl}(a){\setminus{\cl}(\emptyset)}$. Thus, from the hypothesis of Case~2, we can deduce that 
\[a_\ell \ind B\cup \{a_j\;|\; j<\ell,\; \iota_j=1\}\;.\]
We will then be able to use higher Neumann  to extend the sequence $(a_0,a_1,a_1', \dots,a_{\ell-1}, a_{\ell-1}', a_\ell)$ to a rank-{clean} sequence $(a_0,a_1,a_1', \dots,a_{\ell}, a_{\ell}', a_{\ell+1})$ for $u$.

Firstly, set $a_\ell'':=\delta a_\ell$ for an arbitrary witness $\delta$ of \begin{equation}\label{eq:extend2}
(\bar{b}_1,a_{m_1},\dots, a_{m_s}) 
 \equiv (\bar{b}_1',a'_{m_1},\dots, a'_{m_s}) 
 \;.  
\end{equation}
Now, by the above independence condition, we have that $a_\ell''\ind (\overline{b}'_1, a'_{m_1},\dots, a'_{m_s})$. In particular, {setting $S:=\{a_0,a_1,a_1', \dots,a_{\ell-1}, a_{\ell-1}', a_\ell\}$},  higher Neumann gives us 
\begin{equation}\label{eq:homogeneity}
    a_\ell'\equiv_{(\overline{b}'_1, a'_{m_1},\dots, a'_{m_s})} a_\ell''
\end{equation}
such that
\[ {F(a_\ell', \ell)\ind B\cup \bigcup_{a_j^\epsilon\in {S}}F(a_j^\epsilon, j)\;;}\]
{and so, taking the 
{closure} on the right-hand side,}
\begin{equation}\label{eq:wideness consequence}
    F(a_\ell', \ell)\ind B\cup FC({S})\;.
\end{equation}
From (\ref{eq:homogeneity}), 
\[(\bar{b}_1,a_{m_1},\dots, a_{m_s}, a_\ell)\equiv (\bar{b}_1',a'_{m_1},\dots, a'_{m_s}, a'_\ell)\;.\]
{To conclude our proof, we now show that from (\ref{eq:wideness consequence}) and Lemma~\ref{lem:modularmoving} we can prove all conditions showing that $(a_0,a_1,a_1', \dots,a_\ell,a_\ell', a_{\ell+1})$, which is a sequence for $u$ with initial condition $B$ and initial value $a_0$ by construction, is rank-clean. We have $a_0\notin{\cl}(\emptyset)$ since $(a_0)$ is rank-clean by assumption. Next, take $(P, Q)$ to be a partition of the set of freeness ranks occurring in 
the sequence. Suppose without loss of generality that $\rk(a_\ell')\in P$.  By the inductive hypothesis that $(a_0,a_1,a_1', \dots,a_{\ell-1},a_{\ell-1}', a_\ell)$ is rank-clean,}
\[FC(\{x\;\vert\; x\in S,\; \rk(x)\in P\})\ind_B FC(\{x\;\vert\; x\in S,\; \rk(x)\in Q\})\;.\]
From (\ref{eq:wideness consequence}),  base monotonicity, and monotonicity,
\[ F(a_\ell', \ell)\ind_B FC(\{x\;\vert\; x\in S,\; \rk(x)\in P\})\cup FC(\{x\;\vert\; x\in S,\; \rk(x)\in Q\})\;.\]
But now, from Lemma~\ref{lem:modularmoving} {(applied to the relativization ${\cl}_B$, which again yields a modular pregeometry)} and the above two equations, we obtain 
\[ F(a_\ell', \ell)\cup FC(\{x\;\vert\; x\in S,\; \rk(x)\in P\})\ind_B  FC(\{x\;\vert\; x\in S,\; \rk(x)\in Q\})\;.\]
Taking 
{closures}, and noting that $\rk(a_{\ell+1})\in P$ and $F(a_{\ell+1}, \ell+1)\subseteq F(a'_\ell, \ell)$, 
\[ FC(\{x\;\vert\; x\in S\cup\{a_\ell',a_{\ell+1}\},\;  \rk(x)\in P\})\ind_B  FC(\{x\;\vert\; x\in S\cup\{a_\ell',a_{\ell+1}\},\;  \rk(x)\in Q\})\;,\]
as desired. 
An analogous computation also yields
\[FC(\{x\;\vert\; x\in S\cup\{a_\ell',a_{\ell+1}\}\})\ind B\;,\]
thus concluding the proof.
\end{proof}

\begin{theorem}\label{theorem:main} {Let $G$ be a group, and let $w$ be a 
regular word with constants in $G$. 
{Suppose that $G$ has a group action 
$G\acts\Omega$ where $\Omega$ is equipped with a closure operator $\cl$ such that $(\Omega, {\cl})$ forms a $G$-invariant  infinite-dimensional higher Neumann modular pregeometry. 
 Then the set}
\begin{equation}\label{eq:nowheredense}
    \{(g_1, \dots, g_r)\in G^r\;\vert\; w(g_1, \dots, g_r){\neq}  1\}
\end{equation}
  is 
  {dense} in $G^r$ with respect to the topology of pointwise convergence induced by the action on the discrete set $\Omega$. In particular, all mixed identities of $G$ are then singular.}
\end{theorem}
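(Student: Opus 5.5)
To prove Theorem~\ref{theorem:main}, we fix a non-empty basic open set $\prod_{1\leq i\leq r} G_{(\bar{b}_i,\bar{b}_i')}$ of $G^r$ and find a tuple in it at which $w$ does not evaluate to $1$. The tuples $\bar b_i\equiv\bar b_i'$ form an initial condition $B$. By Lemma~\ref{lem:basecase} there is a one-element rank-clean sequence $(a_0)$ for the empty initial segment with initial condition $B$. By Lemma~\ref{lem:hardinduction} it extends to a rank-clean sequence $\bar a=(a_0,a_1,a_1',\dots,a_n,a_n',a_{n+1})$ for the whole word $w$, with initial condition $B$ and initial value $a_0$. By Lemma~\ref{lem:sequences} there are $(g_1,\dots,g_r)$ in the chosen open set with $w(g_1,\dots,g_r)(a_0)=a_{n+1}$. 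So it suffices to show $a_{n+1}\neq a_0$.

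This is where regularity enters. Since $\epsilon_{n+1}=1$ by convention, the rank of the terminal value is $\rk(a_{n+1})=\rk(\gamma_n x_{\iota_n}^{\epsilon_n}\cdots\gamma_0)=\rk(w)$. This differs from $\rk(a_0)=1_{F_r}$ because $w$ is regular. We therefore take the partition $(P,Q)$ of the occurring freeness ranks with $P=\{1_{F_r}\}$ and $Q$ the rest, so $\rk(a_{n+1})\in Q$. Rank-cleanness gives
\[FC(\{x\in\bar a \mid \rk(x)\in P\})\ind_B FC(\{x\in\bar a\mid \rk(x)\in Q\}).\]
Now $a_0$ lies in the left-hand set, because $1\in\mathfrak P(w,0)$ and hence $a_0\in F(a_0,0)$. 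Likewise $a_{n+1}\in F(a_{n+1},n+1)$ lies in the right-hand set. So if $a_0=a_{n+1}$, this element lies in the intersection, which by the displayed independence equals $\cl(B)$.

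It remains to show $a_0\notin\cl(B)$. Rank-cleanness says the foreseeable closure of all elements of $\bar a$ is independent from $B$, so $FC(\bar a)\cap\cl(B)=\cl(\emptyset)$. Since $a_0\in FC(\bar a)$, we would get $a_0\in\cl(\emptyset)$, contradicting the first clause of rank-cleanness. Hence $a_{n+1}\neq a_0$, so $w(g_1,\dots,g_r)\neq 1$, and the non-solution set is dense.

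For the \emph{in particular}: a mixed identity $w$ has empty non-solution set. If $w$ were regular, the density just proved would force $G^r=\emptyset$, which is absurd since $G^r$ is non-empty. So every mixed identity is singular.

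The only delicate point is bookkeeping. We must check the convention $\epsilon_{n+1}=1$, so that the terminal value is recorded as $a_{n+1}$ with rank $\rk(w)$. We must also make sure the independence of foreseeable closures over $B$ is used with $B$ as the base, which is why the separate independence of $FC(\bar a)$ from $B$ is needed. All the real work lies in Lemma~\ref{lem:hardinduction}, which is assumed here.
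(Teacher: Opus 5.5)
Your proof is correct and follows the paper's own argument. Lemmas~\ref{lem:basecase} and~\ref{lem:hardinduction} give a rank-clean sequence for $w$, regularity gives $\rk(a_{n+1})=\rk(w)\neq 1_{F_r}=\rk(a_0)$, rank-cleanness then gives $a_{n+1}\neq a_0$, and Lemma~\ref{lem:sequences} turns this into a tuple in the basic open set.

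Your explicit derivation of $a_{n+1}\neq a_0$ uses the partition $P=\{1_{F_r}\}$, the independence $FC(\bar a)\ind B$, and $a_0\notin\cl(\emptyset)$. It simply spells out what the paper compresses into ``by rank-cleanness, $a_{n+1}\ind a_0$''. One small precision: the intersection of the two foreseeable closures is only contained in $\cl(B)$, not equal to it, but containment is all your argument needs.
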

\begin{proof} 
 {Let 
\[\mathcal{U}=\prod_{i=1}^r G_{(\overline{b}_i, \overline{b'_i})}\]
be an arbitrary non-empty basic open set of $G^{{r}}$. We set $B:=((\overline{b}_1, \overline{b}'_1), \dots, (\overline{b}_r, \overline{b}_r'))$. 
 By Lemmas~\ref{lem:basecase} and~\ref{lem:hardinduction}, there is a rank-clean sequence for $w$ with initial condition $B$. Since $w$ is regular, $\rk(a_{n+1})\neq\rk(a_0)$, and so by rank-cleanness, $a_{n+1}\ind a_0$, and in particular $a_{n+1}\neq a_0$. By Lemma~\ref{lem:sequences}, there exists $(g_1, \dots, g_r)\in\mathcal{U}$ such that  $w(g_1, \dots, g_r)(a_0)=a_{n+1}$. Clearly,  $w(g_1, \dots, g_r)\neq 1$, concluding the proof.} 
\end{proof}

\begin{remark} {We note that, having fixed the size of the initial condition and the length of a word $w$, there is a finite upper bound on the sizes of the sets $\Sigma$, $B$, and $C$ for which we require $G\acts\Omega$ to satisfy the higher Neumann condition (as given in Definition~\ref{def:higherneumann}) in  Lemmas~\ref{lem:basecase} and~\ref{lem:hardinduction},
the only {places where}
we appeal to it. Bounding the sizes of these sets in the definition of the condition allows for the $G$-invariant closure operator $\cl$ to be finer than $\acl$, which opens up the possibility of applications to finitary settings. In particular,
 one could thereby quite possibly obtain 
 lower bounds on the shortest regular mixed identity in a given group (or class of groups) (cf.~\cite{bradford2023non, schneider2024word}).}
\end{remark}

%% file: Sections/Examples.tex
\section{Higher Neumann modular pregeometries: superspaciousness}\label{sect:superspacious}

In this section, we show that group actions  $G\acts\Omega$ with no algebraicity,  as well as  the general linear and projective groups of infinite dimension,  
 {yield} infinite-dimensional higher Neumann modular pregeometries. More generally, we isolate the property of superspaciousness which can easily be verified in these cases and which implies higher Neumann.



\begin{definition} 
Let $G\acts\Omega$ be a group action.  Suppose that  $(\Omega, {\cl})$ forms a $G$-invariant 
pregeometry. 
 We say that $G\acts\Omega$ is \textbf{superspacious} if for any {finite} 
set $B\subseteq\Omega$ and any 
{$a\not\in{\cl}(B)$}
the $G_B$-orbit of $a$ intersects every closed subset of $\Omega$ 
 of finite codimension.
\end{definition}



\begin{fact}\label{fact:superspacious} The following are examples of superspacious group actions  $G\acts\Omega$ such that $(\Omega, {\cl})$ are $G$-invariant modular pregeometries: 
\begin{enumerate}
    \item any $G\acts\Omega$ where $\acl$ is locally finite and  $(\Omega, \acl)$ is an infinite-dimensional degenerate pregeometry. This includes all group actions with no algebraicity;
    \item any $G\acts\Omega$ such that $(\Omega, {\cl})$ forms a {$G$-invariant} infinite-dimensional modular pregeometry 
     which is \textbf{homogeneous}\footnote{{We note that this use of the word ``homogeneous''  from the literature on pregeometries has little to do with homogeneous structures as mentioned in the introduction (cf.~\cite{homogeneous}). In the standard definition (see, for example~\cite[$\S 2$ Definition 1.1]{pillay1996geometric}), $B\subseteq\Omega$ is allowed to also be infinite (though we do not need this).}} in the following sense: for any 
    {finite} subset $B\subseteq \Omega$ and $a,a'\in \Omega\setminus {\cl(B)}$ we have $a\equiv_B a'$. This includes $\GL(\kappa, K)$ and $\PGL(\kappa, K)$ when $\kappa$ is an infinite cardinal and $K$ is a field in their action on their corresponding vector space and projective space {with linear closure and projective closure, respectively.}
\end{enumerate}
\end{fact}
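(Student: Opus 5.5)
The plan is to check the two items separately against the definition of superspaciousness. In each case, given a finite $B\subseteq\Omega$, some $a\notin\cl(B)$, and a closed $C\subseteq\Omega$ of finite codimension, I will produce an element of $C$ lying in the orbit $G_B\cdot a$. Modularity needs no argument: degenerate pregeometries, linear span and projective closure are all listed among the modular examples above.

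\emph{Item 1.} Since $C$ is closed with $\dim(\Omega/C)$ finite, there is a finite $D\subseteq\Omega$ with $\Omega=\acl(C\cup D)$. By degeneracy, and because $C$ is closed,
\[\acl(C\cup D)=C\cup\bigcup_{d\in D}\acl(d).\]
So $\Omega\setminus C\subseteq\acl(D)$, and this set is finite by local finiteness. On the other hand, $a\notin\acl(B)$ means precisely that $G_B\cdot a$ is infinite. Hence the orbit cannot be contained in $\Omega\setminus C$, and so it meets $C$.

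For the claim about actions with no algebraicity: finiteness of $\acl(B)$ on finite sets gives $\acl(A)=A$ for every $A\subseteq\Omega$, via finite character. Thus $\acl$ is locally finite and degenerate, exchange holds vacuously, and the dimension equals $|\Omega|$, which is infinite. So such actions fall under item 1.

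\emph{Item 2.} Here the argument is a dimension count. From $\dim(\Omega)=\dim(C)+\dim(\Omega/C)$ (the additivity used in the proof of Fact~\ref{fact:codimension}), an infinite-dimensional $\Omega$ and finite codimension force $\dim(C)$ to be infinite. Since $\cl(B)$ has dimension at most $|B|$, we get $C\not\subseteq\cl(B)$, so we may pick $a'\in C\setminus\cl(B)$. Homogeneity then gives $a\equiv_B a'$, that is, $a'\in G_B\cdot a\cap C$.

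It remains to verify homogeneity for $\GL(\kappa,K)$ and $\PGL(\kappa,K)$, which is the only step with any content. For vectors $a,a'\notin\langle B\rangle$, extend a basis $E$ of $\langle B\rangle$ to bases $E\cup\{a\}\cup F$ and $E\cup\{a'\}\cup F'$ of $V$, where $|F|=|F'|=\kappa$. Any bijection $F\to F'$, together with the identity on $E$ and $a\mapsto a'$, extends to a linear automorphism fixing $B$ pointwise and sending $a$ to $a'$.

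For the projective case, lift the points of $B$ to vectors and run the same construction. The resulting linear map fixes each lift, so its projectivisation fixes every point of $B$ and sends the point $a$ to the point $a'$. Infinite-dimensionality holds in both cases because $\kappa$ is infinite, and the $G$-invariance of linear span and projective closure is clear.
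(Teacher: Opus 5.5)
Your proof is correct and takes essentially the same approach as the paper. In item~1 you argue that an infinite $G_B$-orbit cannot lie inside the finite set $\acl(D)\supseteq\Omega\setminus C$; the paper phrases this as the orbit having infinite dimension and hence containing a point outside $\acl(F)$. In item~2 you use homogeneity to identify the orbit with $\Omega\setminus\cl(B)$, as the paper does, and you additionally (and correctly) spell out the dimension count showing $C\not\subseteq\cl(B)$ and the homogeneity of $\GL(\kappa,K)$ and $\PGL(\kappa,K)$, which the paper leaves implicit.
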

\begin{proof} 
{We begin by proving (1). Firstly, as noted earlier, if $\acl$ is a degenerate pregeometry, then it is modular.
Take $B\subseteq\Omega$ 
finite, $a\notin \acl(B)$, and let $X$ be the $G_B$-orbit of $a$. By local finiteness {of $\acl$}, $\dim(X/B)$ is infinite. Let $Y$ be a closed set of finite codimension. Then there exists a finite $F\subseteq\Omega$ such that $\acl(Y\cup F)=\Omega$. Since $\dim(X)$ is infinite, there is some $a'\in X$ not contained in $\acl(F)$. Since $\acl$ is degenerate, $a'\in \acl(Y\cup F)=\Omega$ now implies $a'\in Y$, so $Y\cap X\neq\emptyset$ as desired.}

For (2): 
{given a finite set $B\subseteq \Omega$ and $a\notin{\cl}(B)$, by homogeneity  we have that the $G_B$-orbit $X$ of $a$  is all of $\Omega\setminus {\cl}(B)$, and so clearly $X$ intersects every subset of $\Omega$ of finite codimension.} 
\end{proof}


\begin{lemma}\label{lem:nirvana}
    Let $G\acts\Omega$. Suppose that $(\Omega, \cl)$ forms a modular pregeometry and that $G\acts\Omega$ is superspacious. Then, $(\Omega, {\cl})$ is higher Neumann.
\end{lemma}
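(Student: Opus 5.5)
The plan is to find, inside the $G_B$-orbit of $a$, an element $a'$ such that every shift $\sigma a'$ with $\sigma\in\Sigma$ lies in one fixed closed set $D$ that is independent from $C$. Superspaciousness produces such an $a'$ once we know that the set of admissible $a'$ contains a closed set of finite codimension.

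Fix $B\in[\Omega]^{<\omega}$, $a\in\Omega$ with $a\ind B$, $C\in[\Omega]^{<\omega}$, and a finite $\Sigma\subseteq G$.

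\emph{Trivial case.} If $a\in\cl(\emptyset)$, we take $a':=a$. By $G$-invariance $\cl(\emptyset)$ is $G$-invariant, so $\Sigma\cdot a\subseteq\cl(\emptyset)$. Hence $\cl(\Sigma\cdot a)=\cl(\emptyset)$, and $\Sigma\cdot a\ind C$ holds trivially.

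\emph{Main case.} Assume $a\notin\cl(\emptyset)$. Then $a\ind B$ forces $a\notin\cl(B)$: otherwise $a\in\cl(a)\cap\cl(B)\setminus\cl(\emptyset)$.

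\emph{Construction of $D$.} Let $\bar C:=\cl(C)$. Choose a basis of the relativisation $(\Omega,\cl_{\bar C})$ and let $D$ be its closure in $(\Omega,\cl)$.
\begin{itemize}
\item By Fact~\ref{fact:indpregeom}, $D\ind\bar C$, i.e.\ $D\cap\cl(C)=\cl(\emptyset)$.
\item We have $\cl(D\cup C)=\Omega$ and $C$ is finite, so $\dim(\Omega/D)\le|C|$. Thus $D$ is a closed set of finite codimension.
\end{itemize}

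\emph{Pulling back along $\Sigma$.} For each $\sigma\in\Sigma$, the set $\sigma^{-1}D$ is again closed of finite codimension, because $\cl$ is $G$-invariant and $\sigma^{-1}$ maps a basis of $(\Omega,\cl_D)$ to a basis of $(\Omega,\cl_{\sigma^{-1}D})$. Let
\[Y:=\bigcap_{\sigma\in\Sigma}\sigma^{-1}D .\]
By Fact~\ref{fact:codimension}, which is where modularity is used, $Y$ is a closed set of finite codimension.

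\emph{Choice of $a'$.} Since $a\notin\cl(B)$, superspaciousness says that the $G_B$-orbit of $a$ meets $Y$. Pick $a'$ in this intersection. Then $a'\equiv_B a$, and $\sigma a'\in D$ for every $\sigma\in\Sigma$. As $D$ is closed, $\cl(\Sigma\cdot a')\subseteq D$. Therefore
\[\cl(\Sigma\cdot a')\cap\cl(C)\subseteq D\cap\cl(C)=\cl(\emptyset),\]
which is exactly $\Sigma\cdot a'\ind C$.

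The step that needs care is the finiteness of codimension. First, for the single set $D$, this rests on the observation that a closed complement of $\cl(C)$ has codimension at most $|C|$. Second, it must survive the finite intersection over $\Sigma$. Without modularity that intersection could have infinite codimension and superspaciousness would give nothing, which is why Fact~\ref{fact:codimension} is invoked there. The other points to check are routine: that $\cl_D$-bases transfer under $G$, and the trivial case $a\in\cl(\emptyset)$.
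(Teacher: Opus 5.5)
Your proof is correct and follows essentially the same route as the paper's. Both take a closed set $D$ of finite codimension with $D\ind C$ via Fact~\ref{fact:indpregeom}, intersect the pullbacks $\sigma^{-1}D$ for $\sigma\in\Sigma$ using Fact~\ref{fact:codimension} (the one place modularity is used), and apply superspaciousness to find $a'\equiv_B a$ with $\cl(\Sigma\cdot a')\subseteq D$. Your explicit bound $\dim(\Omega/D)\le|C|$ and your handling of the case $a\in\cl(\emptyset)$ just fill in details the paper leaves implicit.
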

\begin{proof} Let $B, C\in[\Omega]^{<\omega}$, {$a\ind B$, and $\Sigma$} 
be as in the definition of higher Neumann and let $X$ denote the $G_B$-orbit of $a$. {If $a\in {\cl}(\emptyset)$, then the conclusion of the higher Neumann condition is {automatically} 
true, so assume this is not the case. Then, {since $a \ind B$,} we have $a\notin{\cl}(B)$.} Extend a basis for ${\cl}(C)$ to a basis for $\Omega$ and let $D$ be the closure of the elements in this basis which are not in ${\cl}(C)$. By construction, $D$ has finite codimension, and by Fact~\ref{fact:indpregeom} we have $D\ind C$. 
By modularity, the intersection of finitely many sets of finite codimension has finite codimension (Fact~\ref{fact:codimension}). {Since $\cl$ is $G$-invariant, $\gamma^{-1} D$ is closed and has the same codimension as $D$ for each $\gamma\in\Sigma$.} Hence, 
\[E:=\bigcap_{\gamma\in\Sigma} \gamma^{-1} D\]
has finite codimension. By  superspaciousness, $E\cap X\neq\emptyset$. Then, picking any $a'\in E\cap X$, 
\[
\Sigma \cdot a'\subseteq D\;,\]
and since $D$ is 
{closed,} 
\[{\cl}(\Sigma\cdot  a')\subseteq D\;.\]
 Thus, 
\[
\Sigma \cdot a'\ind C\;,\]
as desired.
\end{proof}

\begin{remark}
    {Projective closure in {a projective space over an} 
     infinite field  is an explicit example where 
    {we require} a different $G$-invariant closure operator than algebraic closure. For $\PGL(\kappa, K)$ where $K$ is infinite, projective closure does not agree with algebraic closure, {and the latter fails to satisfy}} 
{higher Neumann. To see this, for $\kappa$ an infinite cardinal and $K$ an infinite field, let $V$ be the $\kappa$-dimensional vector space over $K$, and let $PV$ be the associated projective space. For $a\in V\setminus\{0\}$, let $[a]$ denote the corresponding element in $PV$, that is, its  equivalence class  with respect to the equivalence relation
on $V\setminus\{0\}$ identifying vectors that are multiples of each other.
 Below, we write $\pcl$ for projective closure, and $\ind^{\acl}$ for independence with respect to $\acl$. Consider (linearly) independent vectors $a_1$ and $a_2$ from $V$, and let $a:=a_1+a_2$ and $B:=\{[a_1], [a_2]\}$. 
  Since the field $K$ is infinite, $\acl(B)=B$, and clearly $\acl(\{[a]\})=\{[a]\}$. Hence,  $[a]\ind^{\acl} B$. Now, consider any non-zero $\lambda\in K$ such that $\lambda^2\neq 1$, and let $\gamma\in\PGL(\kappa, K)$ be any projectivity induced by a collineation sending $a_1$ to $\lambda a_1$ and fixing $a_2$. Then, set $\Sigma:=\{1, \gamma, \gamma^2\}$. Now, for any ${[a']}\equiv_B [a]$ we have that $\Sigma \cdot {[a']}$ consists of three distinct points on the projective line $\pcl(\{[a_1], [a_2]\})$. By the fundamental theorem of projective geometry, any projectivity fixing three points on the same projective line fixes the entire line pointwise. In particular, $B\subseteq \pcl(\{[a_1], [a_2]\})\subseteq {\acl}(\Sigma \cdot {[a']})$, and so $\Sigma\cdot {[a']}\nind^{\acl} B$. Setting $C:=B$ in the definition of higher Neumann, we have just shown that this property is not satisfied by $\PGL(\kappa, K)$  with respect to $\acl$.} 
\end{remark}

\begin{corollary}\label{cor:main} Let $G$ be any of the following groups:
\begin{itemize}
    \item a group {with an action} $G\acts\Omega$ such that $\acl$ is locally finite and  $(\Omega, \acl)$ is an infinite-dimensional degenerate pregeometry.
   Any action $G\acts\Omega$ with no algebraicity is of this kind;
    \item $\GL(\kappa, K)$ or $\PGL(\kappa, K)$ where $\kappa$ is an infinite cardinal and $K$ is a field. 
\end{itemize}
Let $w$ be a 
regular word with constants in $G$. Then, the set
\[\{(g_1, \dots, g_r)\in G^r\;\vert\; w(g_1, \dots, g_r)\neq 1\}\]
  is dense in $G^r$ {with respect to the topology of pointwise convergence induced by the action on the discrete set $\Omega$}. 
In particular, all mixed identities of $G$ are singular.
\end{corollary}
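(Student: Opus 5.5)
The corollary should follow by feeding each listed case into Theorem~\ref{theorem:main}. For that we need a $G$-invariant, infinite-dimensional, higher Neumann, modular pregeometry $(\Omega,\cl)$. By Lemma~\ref{lem:nirvana}, higher Neumann follows from modularity together with superspaciousness. Fact~\ref{fact:superspacious} supplies superspaciousness, and modularity, in both cases. So the work reduces to checking that each case meets the hypotheses of Fact~\ref{fact:superspacious}.

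For the first bullet we take $\cl:=\acl$. This operator is $G$-invariant and has finite character by Definition~\ref{def:acl}. By assumption it is an infinite-dimensional degenerate pregeometry, and degenerate pregeometries are modular. Fact~\ref{fact:superspacious}(1) gives superspaciousness, so Lemma~\ref{lem:nirvana} gives higher Neumann, and Theorem~\ref{theorem:main} applies.

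It remains to see that an action with no algebraicity falls under this bullet. If $\acl(B)=B$ for all finite $B$, then by finite character $\acl(A)=A$ for every $A$. Hence $\acl$ is trivially locally finite and degenerate. Exchange holds vacuously, because $a\in\acl(A\cup\{b\})\setminus\acl(A)$ forces $a=b$. Finally, $\Omega$ is infinite by standing assumption, and in this pregeometry dimension is cardinality, so it is infinite-dimensional.

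For the second bullet we take $\Omega=V$ with linear span for $\GL(\kappa,K)$, and $\Omega=PV$ with projective closure for $\PGL(\kappa,K)$. Both are $G$-invariant algebraic closure operators satisfying exchange, modularity is listed in the Examples, and the dimension is $\kappa$, which is infinite. The one point needing an argument is homogeneity in the sense of Fact~\ref{fact:superspacious}(2), and I expect this to be the main (though mild) obstacle.

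\emph{Linear case.} Let $B$ be finite and $a,a'\notin\operatorname{span}(B)$. Choose a basis $B_0$ of $\operatorname{span}(B)$. Then $B_0\cup\{a\}$ and $B_0\cup\{a'\}$ are independent. Extend each to a Hamel basis of $V$; both extensions have cardinality $\kappa$. Any bijection between the two bases that fixes $B_0$ pointwise and sends $a$ to $a'$ induces an element of $\GL(\kappa,K)$ in $G_B$ mapping $a$ to $a'$.

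\emph{Projective case.} Lift to representatives and use the same linear map, which acts on points of $PV$; this needs no choice of scalars beyond fixing representatives.

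With homogeneity established, Fact~\ref{fact:superspacious}(2) and Lemma~\ref{lem:nirvana} give higher Neumann, and Theorem~\ref{theorem:main} yields the density statement. The density is taken with respect to the topology of pointwise convergence from the action on $\Omega$. The final clause follows because a regular mixed identity would have an empty non-solution set, contradicting density in the nonempty space $G^r$.
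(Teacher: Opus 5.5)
Your proposal is correct and follows the same route as the paper, whose proof simply combines Fact~\ref{fact:superspacious}, Lemma~\ref{lem:nirvana}, and Theorem~\ref{theorem:main}. Your additional checks spell out claims the paper makes only in the statement of Fact~\ref{fact:superspacious}: that no algebraicity gives a locally finite, infinite-dimensional, degenerate pregeometry, and that the linear and projective actions are homogeneous.
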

\begin{proof} The corollary is just a consequence of putting together Fact~\ref{fact:superspacious}, Lemma~\ref{lem:nirvana}, and Theorem~\ref{theorem:main}.
\end{proof}

%% file: Sections/Microsupported.tex
\section{Micro-supported actions}\label{sec:micro-supported}

We mentioned that our main result, Theorem~\ref{theorem:main}, can be viewed as a vast generalization of Ab\'{e}rt's observation  that  group actions  without algebraicity are lawless; our techniques do not reduce the problem to the case without constants, but prove the latter as a corollary. 
Moving to a topological setting, we now show a result of similar spirit:  all mixed identities are singular in groups with micro-supported actions, again generalizing the lawlessness of such groups due to  Nekrashevych~\cite[Theorem 2.2.5]{nekrashevych2022groups}. In this case we can, without too much effort, reduce the problem to identities without constants.

\begin{definition}\label{def:microsupported}
   Let $G$ be a group, let $\spOm=(\Omega;\tau)$ be a Hausdorff topological space,  and let $G\acts \Omega$ be a group action  by homeomorphisms of $\spOm$. 
We say that $G\acts \Omega$ is \textbf{micro-supported} if $G_{\Omega\setminus U}\neq\{1\}$
for every non-empty open set $U\subseteq \Omega$.
\end{definition}

Note that Hausdorff spaces $\spOm$  admitting a micro-supported action $G\acts \Omega$ have  no isolated points since for any  $a\in \Omega$, $G_{\Omega\setminus\{a\}}=\{1\}$ (and hence $\{a\}$ cannot be open). In particular, $\Omega$ 
must then be  infinite.

\begin{theorem}[Theorem 2.2.5 in~\cite{nekrashevych2022groups}]\label{thm:microlawless}
    Let $G$ be a group, and suppose that $G$ has a micro-supported  group action $G\acts \Omega$ on a  Hausdorff topological space
    $\spOm$. Then, $G$ is lawless. 
\end{theorem}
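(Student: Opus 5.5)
Since $\spOm$ is Hausdorff and the action is by homeomorphisms, I would reduce lawlessness to building, for a fixed non-trivial reduced word $w=x_{\iota_n}^{\epsilon_n}\cdots x_{\iota_1}^{\epsilon_1}\in F_r$, group elements $g_1,\dots,g_r$ and a point $a_0$ with $w(g_1,\dots,g_r)(a_0)\neq a_0$. I would build the trajectory with open sets rather than points. The basic tool is the following \emph{displacement observation}. For every non-empty open $W\subseteq\Omega$, micro-supportedness gives $1\neq h\in G_{\Omega\setminus W}$. Pick $y$ with $hy\neq y$; necessarily $y\in W$, since $h$ fixes $\Omega\setminus W$ pointwise and $W$ is $h$-invariant. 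By the Hausdorff property and continuity there is an open $y\in V\subseteq W$ with $hV\cap V=\emptyset$ and $hV\subseteq W$. So inside any open set we can find an element supported there that moves some small open set off itself.

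The plan is to define each $g_i$ as a finite product of such local elements with pairwise disjoint supports. Because the supports are disjoint, the factors commute and each factor acts only on its own region. I would proceed inductively along $w$ from the right, maintaining the following data:
\begin{enumerate}
\item non-empty open sets $V_0,V_1,\dots,V_k$, pairwise disjoint;
\item for each variable $i$, a partial product whose supports are pairwise disjoint, with $g_{\iota_j}^{\epsilon_j}V_{j-1}=V_j$ for $j\le k$;
\item a fresh region $R_k$, non-empty and open, disjoint from all sets used so far.
\end{enumerate}
Since $\spOm$ has no isolated points and is Hausdorff, I can start with $n+1$ pairwise disjoint non-empty open sets and keep the unused ones as reserve. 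This gives room to choose $W$ in each step inside a reserve region together with a small piece of the current $V_k$.

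At step $k+1$ with variable $x_{\iota_{k+1}}^{\epsilon_{k+1}}$ there are two cases. If this variable is not already forced to act on $V_k$, I use the displacement observation to add a new factor to $g_{\iota_{k+1}}$. Its support is chosen in a small set meeting $V_k$ and a fresh reserve region, but avoiding all supports already assigned to $g_{\iota_{k+1}}$. The factor moves a shrunken $V_k'\subseteq V_k$ into the fresh region, producing $V_{k+1}$. Afterwards I shrink $V_0,\dots,V_{k-1}$ to the preimages of $V_k'$ under the already defined initial segments; these preimages are open and non-empty because the maps are homeomorphisms.

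Reducedness of $w$ is used exactly to guarantee that the variable never has to undo its previous move. Whenever $\iota_{k+1}=\iota_k$ we have $\epsilon_{k+1}=\epsilon_k$, so the set $V_k$ was produced as an image under $g_{\iota_k}^{\epsilon_k}$. I must push it further in the same direction, and by shrinking I can arrange that the new support misses the earlier ones for that variable. At the end $V_n$ lies in a region disjoint from $V_0$, so any $a_0\in V_0$ satisfies $w(\bar g)(a_0)\in V_n\not\ni a_0$.

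The step I expect to be the main obstacle is the \emph{consistency} bookkeeping: a new factor added to $g_i$ must not alter the action of $g_i$ (or of the other variables) on the earlier trajectory sets $V_j$ that the word passes through again later. The difficulty is that supports of elements of $G_{\Omega\setminus W}$ live inside $W$, so $W$ must be chosen disjoint from all earlier $V_j$ other than the current one. Here I would first try to shrink everything to tiny neighbourhoods of distinct points and use Hausdorffness to separate finitely many points. If that fails, I would fall back on the commutator trick: if $G$ satisfied $w$, then so would every rigid stabiliser $G_{\Omega\setminus U}$. For disjoint $U_1,\dots,U_m$ these rigid stabilisers commute, and one can then try to derive a contradiction by an induction on the length of $w$, replacing $w$ by a shorter non-trivial word obtained from commutators with elements supported off the current sets.
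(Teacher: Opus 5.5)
The paper gives no proof of this statement: it is quoted from Nekrashevych's book and used as a black box in Section~\ref{sec:micro-supported}. Judged on its own, your proposal has a genuine gap. It sits exactly at the consistency bookkeeping that you flag and then leave open, and the mechanism you do commit to cannot work.

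Your displacement observation is correct, but moving a shrunken $V_k'$ ``into a fresh reserve region'' presupposes a transitivity that micro-supported actions need not have. The non-trivial $h\in G_{\Omega\setminus W}$ may be supported entirely in the reserve part of $W$, since $G_{\Omega\setminus R}\le G_{\Omega\setminus W}$. For instance, the group of homeomorphisms of $\mathbb{R}$ fixing $\mathbb{Z}$ pointwise acts micro-supportedly, yet no element maps a point of $(0,1)$ into $(1,2)$. This can be repaired by taking $W\subseteq V_k$ and letting the displaced copy $hV$ serve as $V_{k+1}$, but then new sets lie inside the supports of earlier factors.

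The decisive failure is that the disjoint-supports scheme breaks as soon as a variable recurs, already for $w=x^2$. The set $V_k=h(V_{k-1}')$ consists of points moved by the factor $h$ of $g_{\iota_k}$ that produced it. So a new factor of the same $g_{\iota_k}$ that moves part of $V_k$ cannot have support missing that of $h$, and your claim that shrinking arranges this is false. Indeed, if $g=\prod_j h_j$ with $h_j\in G_{\Omega\setminus W_j}$ and the $W_j$ pairwise disjoint, then each $W_j$ is $g$-invariant and $g$ acts on it as $h_j$. Hence $g^m(a_0)=h_j^m(a_0)$ for a single displacement element, and nothing prevents $h_j$ from being an involution.

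What is missing is a way to let a variable act on a new region by composing \emph{non-commuting} local elements, together with a way to escape finitely many forbidden points. A repair is an open-set version of Ab\'{e}rt's argument.

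Keep $g_1,\dots,g_r\in G$ and a non-empty open $V_0$ such that the sets $V_j:=\phi_j(V_0)$, where $\phi_j:=g_{\iota_j}^{\epsilon_j}\cdots g_{\iota_1}^{\epsilon_1}$, are pairwise disjoint for $j\le k$. At step $k+1$, with $\iota:=\iota_{k+1}$ and some $h\in G_{\Omega\setminus V_k}$, replace $g_\iota$ by $g_\iota h$ if $\epsilon_{k+1}=1$ and by $h^{-1}g_\iota$ if $\epsilon_{k+1}=-1$. Reducedness of $w$ is exactly what guarantees that $V_k$ is disjoint from every set on which $g_\iota^{\epsilon_{k+1}}$ already has prescribed values. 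Hence $\phi_1,\dots,\phi_k$ are unchanged on $V_0$, and no disjointness of supports is needed.

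You then need some $y\in V_k$ and $h$ such that the new point $g_\iota^{\epsilon_{k+1}}h(y)$ avoids $\phi_j\phi_k^{-1}(y)$ for all $j\le k$. In other words, $h(y)$ must avoid at most $k+1$ prescribed points. A single displacement does not give this, but nested ones do:
\begin{itemize}
\item take $h_1\in G_{\Omega\setminus V_k}$ displacing some open $A_1\subseteq V_k$;
\item then take $h_2\in G_{\Omega\setminus h_1A_1}$ displacing some open $A_2\subseteq h_1A_1$, and so on.
\end{itemize}
Write $y_0:=y$ and $y_i:=h_iy_{i-1}$, and choose $y$ so that $y_{i-1}\in A_i$ for all $i\le k+1$. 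Then $y_i,y_{i+1},\dots\in h_iA_i$, which misses $A_i$, so the $k+2$ points $y_0,\dots,y_{k+1}$ are pairwise distinct. One of them therefore avoids the forbidden points.

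Continuity and Hausdorffness then let you shrink $V_0$ to $\phi_k^{-1}$ of a small neighbourhood of $y$, so that $V_0,\dots,V_{k+1}$ are pairwise disjoint. Your commutator fallback is no substitute as stated: commutation relations alone never contradict a law (abelian groups satisfy one), so a concrete reduction would still be needed.
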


In our proof we will require the following well-known fact (cf.~\cite[Corollary 2.2.6]{nekrashevych2022groups}). 

\begin{fact}\label{fact:local-lawlessness} Let $\spOm$ be a Hausdorff topological space and let $G\acts \Omega$ be a micro-supported group action. Let $V\subseteq \Omega$ be non-empty and open. Then,  $G_{\Omega\setminus V}\acts V$ is micro-supported.
\end{fact}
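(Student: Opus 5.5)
We must prove Fact~\ref{fact:local-lawlessness}: if $G\acts\Omega$ is micro-supported on the Hausdorff space $\spOm$ and $V\subseteq\Omega$ is non-empty and open, then the restricted action of $H:=G_{\Omega\setminus V}$ on $V$ (with the subspace topology) is micro-supported.

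First I check that this restricted action makes sense. Each $h\in H$ fixes $\Omega\setminus V$ pointwise. Since $h$ is a bijection of $\Omega$, it therefore maps $V$ onto $V$. It acts on $V$ by homeomorphisms, as the restriction of a homeomorphism of $\spOm$ preserving $V$. The space $V$ with the subspace topology is Hausdorff.

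The action is also faithful on $V$. If $h\in H$ acts trivially on $V$, then $h$ fixes all of $\Omega$, so $h=1$. This is exactly what lets us read the condition $H_{V\setminus U}\neq\{1\}$ inside $V$ in the same way as in $\Omega$.

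Now let $U\subseteq V$ be non-empty and open in $V$. Since $V$ is open in $\spOm$, $U$ is open in $\spOm$. By micro-supportedness of $G\acts\Omega$ there is $g\in G_{\Omega\setminus U}$ with $g\neq 1$. As $U\subseteq V$, we have $\Omega\setminus V\subseteq\Omega\setminus U$, so $g$ fixes $\Omega\setminus V$ pointwise and hence $g\in H$. Moreover $g$ fixes $V\setminus U\subseteq\Omega\setminus U$ pointwise, so $g\in H_{V\setminus U}$. By faithfulness on $V$, $g$ acts non-trivially on $V$.

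Hence $H_{V\setminus U}\neq\{1\}$ as a group acting on $V$, which is the definition of micro-supported (Definition~\ref{def:microsupported}). No step is a real obstacle. The only point needing care is the faithfulness observation: $H$ acts faithfully on $V$ because its elements already fix the complement of $V$.
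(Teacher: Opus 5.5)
Your proposal is correct and follows essentially the same route as the paper: a non-empty open $U\subseteq V$ is open in $\spOm$, so micro-supportedness gives a non-trivial element of $G_{\Omega\setminus U}$, which lies in $G_{\Omega\setminus V}$ and fixes $V\setminus U$. Faithfulness of the restricted action, which the paper also notes, ensures this element is non-trivial on $V$. Your extra checks that $V$ is preserved, that the action is by homeomorphisms, and that $V$ is Hausdorff are fine additions.
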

\begin{proof}
If
$W\subseteq V$ is non-empty and open in $V$, then $W$ is open in $\spOm$. Hence, $G_{\Omega\setminus W}\neq \{1\}$. {Moreover, the restricted action is faithful: if $g\in G_{\Omega\setminus V}$ fixes $V$ pointwise, then $g=1$.}
%
\end{proof}

The following is a standard fact about micro-supported actions, which follows, for example, from~\cite[Lemma 6.5.(2)]{kim2021structure}.

\begin{lemma}\label{lem:finite-germ-separation} Let $\spOm$ be a Hausdorff topological space, and let $G\acts \Omega$ be a group action by homeomorphisms of $\spOm$.  For every finite
set $\Sigma\subseteq G$ and every non-empty open set {$O\subseteq \Omega$},
there is a
non-empty open set $V\subseteq {O}$
 such that, for all $\alpha,
\beta\in\Sigma$,
\begin{itemize}
    \item EITHER $\alpha\mathord\upharpoonright_V=\beta\mathord\upharpoonright_V$; 
    \item OR $\alpha(V)\cap \beta(V)=\emptyset$.
\end{itemize}
\end{lemma}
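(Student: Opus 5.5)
We prove Lemma~\ref{lem:finite-germ-separation} by induction on the set of pairs from $\Sigma$, shrinking the open set one pair at a time. Since $\Sigma$ is finite, it suffices to establish the one-pair version. For any $\alpha,\beta\in G$ and any non-empty open $O'\subseteq\Omega$, we want a non-empty open $V\subseteq O'$ with either $\alpha\mathord\upharpoonright_V=\beta\mathord\upharpoonright_V$ or $\alpha(V)\cap\beta(V)=\emptyset$.

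The key observation is that both properties pass to non-empty open subsets. If $\alpha$ and $\beta$ agree on $V$, they agree on every $V'\subseteq V$. If $\alpha(V)\cap\beta(V)=\emptyset$, the same holds for every $V'\subseteq V$. So we enumerate the pairs $(\alpha,\beta)\in\Sigma^2$ as $p_1,\dots,p_N$. Starting from $V_0:=O$, we apply the one-pair version to $p_k$ inside $V_{k-1}$ to obtain a non-empty open $V_k\subseteq V_{k-1}$. Then $V:=V_N$ works for all pairs simultaneously.

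For the one-pair statement, put $g:=\beta^{-1}\alpha$, a homeomorphism. Then $\alpha(V)\cap\beta(V)=\emptyset$ iff $g(V)\cap V=\emptyset$, and $\alpha\mathord\upharpoonright_V=\beta\mathord\upharpoonright_V$ iff $g$ fixes $V$ pointwise. We split into two cases.

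\emph{Case 1: $g$ fixes every point of $O'$.} Take $V:=O'$.

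\emph{Case 2: some $x\in O'$ has $g(x)\neq x$.} By the Hausdorff property, choose disjoint open sets $W_1\ni x$ and $W_2\ni g(x)$. Set
\[V:=O'\cap W_1\cap g^{-1}(W_2).\]
This set is open because $g$ is a homeomorphism, and it is non-empty because it contains $x$. Moreover $V\subseteq W_1$ and $g(V)\subseteq W_2$, so $g(V)\cap V=\emptyset$.

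The only delicate point is the first paragraph's observation that both alternatives are preserved under shrinking, since it is what makes the induction over pairs legitimate. Everything else is routine. Note that micro-supportedness is not needed, only that $G$ acts by homeomorphisms of a Hausdorff space, which matches the hypotheses of the lemma.
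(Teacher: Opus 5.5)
Your proof is correct and follows the paper's argument. Like the paper, you reduce to a single pair by iterated shrinking, pass to $\beta^{-1}\alpha$, and then either take $V=O$ when it fixes $O$ pointwise or use Hausdorffness and continuity around a moved point. You also spell out two details the paper leaves implicit: why both alternatives survive shrinking, and the explicit neighbourhood $O'\cap W_1\cap g^{-1}(W_2)$.
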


\begin{proof}
By iterating the argument, it is sufficient to show the statement for $|\Sigma|=2$. Writing $\Sigma=\{\alpha,\beta\}$ and setting $\gamma:=\beta^{-1}\alpha$, we have to find a non-empty open set $V\subseteq O$ such that $\gamma\mathord\upharpoonright_V=\operatorname{id}_V$ or $V\cap \gamma(V)=\emptyset$. {If $\gamma$ {fixes}
all elements of $O$, then we can set $V:=O$. Otherwise, for any $a\in O$ with $\gamma(a)\neq a$, using continuity and Hausdorffness we can pick an open neighbourhood $V$ of $a$ in $O$ such that $V\cap \gamma(V)=\emptyset$, proving the statement.}
\end{proof}

{The following lemma allows us to simplify our verification that non-solution sets of equations with regular words are dense in $G^r$.}

\begin{lemma}\label{lemma:slightly technical} 
Let $G\acts\Omega$ be any group action. 
Suppose that for any regular word ${w}\in G* F_r$ and for any finite $B\subseteq \Omega$, there are $p_1, \dots, p_r\in G_B$ such that ${w}(p_1, \dots, p_r)\neq 1$. Then, for any regular word $w\in G* F_r$, the set 
\begin{equation}\label{eq:thesolutionset}
   \{ (g_1, \dots, g_r)\in G^r\ \vert\ w(g_1, \dots, g_r){\neq} 1\}\; 
\end{equation}
{is dense in $G^r$ with respect to $\pw$.}
\end{lemma}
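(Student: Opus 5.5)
Fix a regular word $w\in G*F_r$ and a non-empty basic open set $\mathcal{U}=\prod_{i=1}^r G_{(\overline{b}_i,\overline{b}_i')}$. The plan is to reduce to the hypothesis by a change of variables that absorbs the initial condition into constants. Since $\mathcal{U}$ is non-empty, pick $(h_1,\dots,h_r)\in\mathcal{U}$, so $h_i(\overline{b}_i)=\overline{b}_i'$. Let $B$ be the finite set of all entries of the tuples $\overline{b}_1,\dots,\overline{b}_r$. For $p_i\in G_B$ we have $h_ip_i(\overline{b}_i)=h_i(\overline{b}_i)=\overline{b}_i'$. Hence $(h_1p_1,\dots,h_rp_r)\in\mathcal{U}$ for all $p_1,\dots,p_r\in G_B$.

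Next I define the substituted word $\tilde w(x_1,\dots,x_r):=w(h_1x_1,\dots,h_rx_r)\in G*F_r$. This is the image of $w$ under the endomorphism of $G*F_r$ that fixes $G$ and sends $x_i\mapsto h_ix_i$. After reducing, $\tilde w$ is a word with constants in the form (\ref{eq:word}); merging adjacent constants is harmless, and any critical constant equal to $1$ gets cancelled in the reduction. This endomorphism commutes with the natural augmentation: $\rk(h_ix_i)=x_i$ and $\rk$ kills $G$. Therefore $\rk(\tilde w)=\rk(w)\neq 1_{F_r}$, and $\tilde w$ is regular.

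Now I apply the hypothesis to the regular word $\tilde w$ and the finite set $B$. It gives $p_1,\dots,p_r\in G_B$ with $\tilde w(p_1,\dots,p_r)\neq1$, i.e.\ $w(h_1p_1,\dots,h_rp_r)\neq 1$. Setting $g_i:=h_ip_i$, we get $(g_1,\dots,g_r)\in\mathcal{U}$ with $w(g_1,\dots,g_r)\neq1$. Since $\mathcal{U}$ was an arbitrary non-empty basic open set, the set (\ref{eq:thesolutionset}) is dense.

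There is no substantial obstacle; the only point needing care is checking that regularity survives the substitution. This is immediate from the fact that $\rk$ is a homomorphism and $\rk\circ\phi=\rk$ on generators, where $\phi$ is the substitution endomorphism. Consequently the reduced form of $\tilde w$, whatever cancellations occur, still has non-trivial freeness rank.
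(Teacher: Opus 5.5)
Your proposal is correct and follows the paper's proof essentially verbatim: you pick a point $(h_1,\dots,h_r)\in\mathcal{U}$, substitute $x_i\mapsto h_ix_i$, apply the hypothesis with $B$ the set of entries of the $\overline{b}_i$, and set $g_i:=h_ip_i$. The one difference is that you justify regularity of the substituted word via $\rk\circ\phi=\rk$, a step the paper simply calls clear.
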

\begin{proof} {Let $\mathcal{U}$ be a non-empty basic open subset of $G^r$. We may write}
\[
  \mathcal{U}
  = \prod_{i=1}^r G_{(\overline{b}_i, \overline{b}'_i)},\]
{where $\overline{b}_i$ and $\overline{b}_i'$ are tuples from $\Omega$.} {To prove density,} it suffices to find $(g_1,\ldots,g_r)\in \mathcal U$ with $w(g_1,\ldots,g_r)\neq 1$. Choose
$\mathbf f=(f_1,\ldots,f_r)\in\mathcal U$, and let $B\subseteq \Omega$ be the
finite set consisting of all entries of the tuples
$\bar b_1,\ldots,\bar b_r$. Let $w_{\mathbf f}$ be the word obtained by {replacing} each occurrence of $x_i$ by $f_ix_i$, for all {$i\in\{1, \dots, r\}$.} Clearly, $w_{\mathbf f}$ is regular, and for every
$\mathbf p=(p_1,\ldots,p_r)\in G^r$, 
\begin{equation}\label{eq:substitution-evaluation}
  w_{\mathbf f}(p_1,\ldots,p_r)
  =w(f_1p_1,\ldots,f_rp_r).
\end{equation}
By assumption, there are
$p_1,\ldots,p_r\in G_B$ such that $w_{\mathbf f}(p_1,\ldots,p_r)\neq 1$.
For  $1\leq i\leq r$, set $g_i:=f_i p_i$. Since every $p_i$ fixes $B$ pointwise, $g_i$ agrees with $f_i$ on
$\bar b_i$, and so $(g_1,\ldots,g_r)\in\mathcal U$. By \eqref{eq:substitution-evaluation}, $w(g_1, \dots, g_r)\neq1$.
\end{proof}

\begin{theorem}\label{thm:micro-supported-mixed-identities} Let $G$ be a group, and let $w\in G*F_r$ be a regular word with constants in $G$. If $G$ has a micro-supported group action on a Hausdorff topological space $\spOm$, then the set
\[
  \{(g_1,\ldots,g_r)\in G^r\ \vert \ w(g_1,\ldots,g_r) {\neq} 1\}
\]
is dense in $G^r$ with respect to the topology of pointwise convergence 
 induced by the action {on $\Omega$ considered as a discrete set}. 
In particular, every mixed identity of $G$ is then  singular.
\end{theorem}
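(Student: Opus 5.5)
By Lemma~\ref{lemma:slightly technical}, it is enough to prove the following: for every regular word $w=\gamma_n x_{\iota_n}^{\epsilon_n}\cdots\gamma_1x_{\iota_1}^{\epsilon_1}\gamma_0$ and every finite $B\subseteq\Omega$, there are $p_1,\dots,p_r\in G_B$ with $w(p_1,\dots,p_r)\neq 1$. The plan is to reduce this to Theorem~\ref{thm:microlawless} for a restricted action, which has no constants to worry about.

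\textbf{Step 1 (rewriting $w$).} Set $\delta_j:=\gamma_{j-1}\cdots\gamma_0$ for $1\le j\le n+1$. Then
\[
w(p_1,\dots,p_r)=\delta_{n+1}\prod_{j=n}^{1}\bigl(\delta_j^{-1}p_{\iota_j}^{\epsilon_j}\delta_j\bigr).
\]
Finite sets are closed in a Hausdorff space, so $O:=\Omega\setminus B$ is open. It is non-empty because $\Omega$ is infinite.

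\textbf{Step 2 (choosing $V$).} Apply Lemma~\ref{lem:finite-germ-separation} to the finite set $\Sigma$ of all $\delta_j$ and all products $\delta_j\delta_k^{-1}$, and shrink the resulting open set so that all translates $\delta_jV$ avoid $B$. This yields a non-empty open $V$ such that any two of the maps in question either agree on $V$ or send $V$ to disjoint sets. This partitions the indices $j$ into classes $c$; within a class the $\delta_j$ agree on $V$, and for distinct classes the translates $\delta_jV$ are disjoint.

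\textbf{Step 3 (defining the $p_i$).} Take $q_1,\dots,q_r\in G_{\Omega\setminus V}$. Choose one representative $\delta_c$ for each class and set
\[
p_i:=\prod_c \delta_c q_i\delta_c^{-1}.
\]
This is a product of commuting elements with disjoint supports $\delta_cV$, all avoiding $B$, so $p_i\in G_B$. The intended effect is the following: for $a\in V$, each factor $\delta_j^{-1}p_{\iota_j}\delta_j$ acts on $V$ exactly as $q_{\iota_j}$. The reason is that only the piece $\delta_cq\delta_c^{-1}$ of $j$'s own class meets $\delta_jV=\delta_cV$, and $\delta_j$ and $\delta_c$ agree on $V$. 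If, in addition, the point never leaves $V$, then $w(p)$ acts on $V$ as $\delta_{n+1}\cdot\mathrm{rk}(w)(q_1,\dots,q_r)$.

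\textbf{Step 4 (applying lawlessness).} By Fact~\ref{fact:local-lawlessness}, the action $G_{\Omega\setminus V}\acts V$ is micro-supported, so by Theorem~\ref{thm:microlawless} the group $G_{\Omega\setminus V}$ is lawless. Hence we can choose $q$ with $u(q)\neq1$ for the non-trivial reduced word $u:=\mathrm{rk}(w)$. Comparing $w(p)$ for this $q$ and for $q=1$ then shows that $w(p)\neq1$ for one of the two choices; for $q=1$ we get $w(p)=\delta_{n+1}$, so if $\delta_{n+1}=1$ the first choice works. If $\delta_{n+1}\neq1$, apply the same dichotomy with $u(q)$ replaced by $u(q)q'$, or shrink $V$ so that $\delta_{n+1}$ either is the identity on $V$ or moves $V$ off itself.

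\textbf{Main obstacle.} The delicate point is the control of the intermediate points, that is, the claim that the trajectory of $a\in V$ stays inside $V$ (or inside the union of the relevant translates). The naive approach fails: taking $p_i$ supported in $V$ alone breaks down for words such as $x\gamma y\gamma^{-1}x^{-1}\gamma y^{-1}\gamma^{-1}$, where the conjugates have disjoint supports and therefore commute. This is exactly why $p_i$ is spread over all class translates $\delta_cV$ in Step 3. To make the trajectory argument rigorous, the first thing to try is to include in $\Sigma$ all partial products $\gamma_k\cdots\gamma_j$ and their inverses, not just the $\delta_j$. Then, whenever the point lies in some $\delta_cV$ (or a translate of it by a path), the next constant sends it either to a translate where the variables act via some $q_i$ conjugated back to $V$, or to a region disjoint from all supports, where the variables act trivially. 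The bookkeeping should then collapse these trivial stretches exactly as the augmentation $\mathrm{rk}$ does.
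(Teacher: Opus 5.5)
Your construction is the paper's own: your $\delta_{k+1}$ is the paper's $\xi_k=\gamma_k\cdots\gamma_0$, and your $p_i=\prod_c\delta_c q_i\delta_c^{-1}$ is its $\prod_{j\in D}\xi_j h_i\xi_j^{-1}$. But you stop short of concluding. Your Step~3 describes the action of $w(p)$ on $V$ only under the proviso that ``the point never leaves $V$''. Your last paragraph then proposes a repair that is only sketched and not needed: enlarging $\Sigma$ by partial products and tracking regions where the variables act trivially.

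The proviso holds automatically, by what you have already shown. Since $q_i\in G_{\Omega\setminus V}$ fixes $\Omega\setminus V$ pointwise, it maps $V$ onto $V$. You showed that $\delta_j^{-1}p_i\delta_j$ agrees with $q_i$ on $V$; the same argument gives this for inverses, since $p_i^{-1}=\prod_c\delta_c q_i^{-1}\delta_c^{-1}$. Hence every factor $\delta_j^{-1}p_{\iota_j}^{\epsilon_j}\delta_j$ in your Step~1 factorisation maps $V$ onto $V$. So the whole product restricts on $V$ to $q_{\iota_n}^{\epsilon_n}\cdots q_{\iota_1}^{\epsilon_1}\mathord\upharpoonright_V=u(q)\mathord\upharpoonright_V$, with $u=\rk(w)$; cancellations in $\rk(w)$ are absorbed by evaluation. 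This gives $w(p)\mathord\upharpoonright_V=\delta_{n+1}\circ u(q)\mathord\upharpoonright_V$.

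In terms of the original word, the point lies in $\delta_{k+1}(V)$ right after the constant $\gamma_k$. Indeed, $\gamma_k=\delta_{k+1}\delta_k^{-1}$ maps $\delta_k(V)$ onto $\delta_{k+1}(V)$, and each $p_i$ preserves every translate $\delta_c(V)$. So the trajectory never enters a region where the variables act trivially, and critical constants cause no trouble because the $p_i$ act on all translates at once. This is exactly the paper's induction $w_k(p)(c_0)=\xi_k u_k(h)(c_0)$.

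Once this is in place, your Step~4 is already complete, and the alternatives you add for $\delta_{n+1}\neq1$ are superfluous. The tuple $(1,\ldots,1)\in G_B^r$ gives $w(1,\ldots,1)=\delta_{n+1}$. So either that tuple is a witness, or $\delta_{n+1}=1$ and $w(p)\mathord\upharpoonright_V=u(q)\mathord\upharpoonright_V\neq\operatorname{id}_V$; the last inequality holds because $u(q)\neq1$ is supported in $V$. This ending is slightly more economical than the paper's, which puts $1$ into $\Sigma$ and splits on whether $\xi_n$ is the identity on $V$ or satisfies $\xi_n(V)\cap V=\emptyset$.

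Two small points. The products $\delta_j\delta_k^{-1}$ in your $\Sigma$ are unnecessary; the $\delta_j$ alone suffice. It is also cleaner to apply Lemma~\ref{lem:finite-germ-separation} directly to $O:=\Omega\setminus\bigcup_j\delta_j^{-1}(B)$, as the paper does, rather than to $\Omega\setminus B$ and then shrink.
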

\begin{proof}
Let $w$ be written  as in (\ref{eq:word}).  We only need to verify the condition of Lemma~\ref{lemma:slightly technical}, i.e.~for any  $B\subseteq \Omega$ finite we will find $p_1, \dots, p_r\in G_B$ such that ${w}(p_1, \dots, p_r)\neq 1$. For every $k\in [n]$, let ${w}_k$ be the 
{initial segment (from the right)} 
of ${w}$ of length $2k+1$ and $u_k:=\rk({w}_k)$ (which is a word without constants in $F_r$). 
Since ${w}_n=w$ is regular, $\rk({w}_n)=u_n\neq 1$. For every $k\in [n]$, set
\[
  \xi_k:=\gamma_k\gamma_{k-1}\cdots\gamma_0\;.
\]

 Since $
C:=\bigcup_{k\in [n]}\xi_k^{-1}(B)$
is finite and $\spOm$ is Hausdorff, the set $O:=\Omega\setminus C$ is non-empty and open. Applying 
Lemma~\ref{lem:finite-germ-separation} to $ \Sigma:=\{1,\xi_0,\ldots,\xi_n\}$
and to $O$, we obtain a non-empty open set $V\subseteq O$ such that any two
members of $\Sigma$ either agree on $V$ or have disjoint images of $V$. 
Since $V\subseteq \Omega\setminus C$, for all $k\in [n]$ we have that
\begin{equation}\label{eq:cells-avoid-B}
  \xi_k(V)\cap B=\emptyset\;.
\end{equation}
By Fact~\ref{fact:local-lawlessness}, $G_{\Omega\setminus V}\acts V$ is micro-supported, and hence Theorem~\ref{thm:microlawless} assures there exist 
$h_1,\ldots,h_r\in G_{\Omega\setminus V}$ such that $u_n(h_1,\ldots,h_r)\neq 1$.


 Let $D\subseteq [n]$ be such that for each $k\in [n]$ 
 there is a unique $j\in D$ such that $\xi_k(V)=\xi_j(V)$. For each 
$i\in\{1,\ldots,r\}$ and each $j\in D$, let
\[
  q_{i,j}:=\xi_{j}h_i \xi_{j}^{-1}\; .
\]
Since $h_i$ fixes $\Omega\setminus V$ pointwise,  $q_{i,j}$ fixes
$\Omega\setminus \xi_j(V)$ pointwise. Hence, if $j,k\in D$ are distinct, then $q_{i,j}$ and $q_{i,k}$ commute. 
It follows that for each $i\in\{1, \dots, r\}$ the product 

\begin{equation}\label{eq:patched-variable}
  p_i:=\prod_{j\in {D}}
  q_{i,j} 
\end{equation}
is independent of the order of its factors, and 
\begin{equation}\label{eq:restriction-to-cell}
  p_i\mathord\upharpoonright_{\xi_j(V)}
  =q_{i,j}\mathord\upharpoonright_{\xi_j(V)}
\end{equation}
for all 
 $j\in D$.
Moreover, by \eqref{eq:cells-avoid-B}, every $\xi_j(V)$ is disjoint
from $B$, and so, for each $i\in\{1, \dots, r\}$ we have  $p_i\in G_B$. Finally, it follows straightforwardly from the definition that {for any  ${\epsilon}\in\{-1,1\}$, any $i\in\{1,\ldots,r\}$, and any $k\in[n]$}
\begin{equation}\label{eq:coordinate-action}
  (\xi_k^{-1}\ p_i^{{{\epsilon}}}\ \xi_k)\mathord\upharpoonright_V
  =h_i^{{{\epsilon}}} \mathord\upharpoonright_V\; ;
\end{equation}
{this follows from the definition of $p_i$ for $k\in D$, and for all other $k\in [n]$ since $\xi_k\mathord\upharpoonright_V=\xi_j\mathord\upharpoonright_V$ for some $j\in D$.}

We now show that for every $c_0\in V$ and every $k\in [n]$,
\begin{equation}\label{eq:inductive-evaluation}
  {w}_k(p_1,\ldots,p_r)(c_0)
  =\xi_k u_k(h_1,\ldots,h_r)(c_0).
\end{equation}
We use induction on $k$. The case $k=0$
is immediate since ${w}_0=\gamma_0=\gamma_0 u_0$. Suppose \eqref{eq:inductive-evaluation} holds for $k-1$, where $1\leq k\leq n$. Set 
\[
  c_{k-1}:=u_{k-1}(h_1,\ldots,h_r)(c_0).
\]
 Then $c_{k-1}\in V$ since $h_i\in G_{\Omega\setminus V}$ for all $i\in\{1,\ldots,r\}$. 
By the inductive hypothesis and~\eqref{eq:coordinate-action},
\[
\begin{aligned}
  {w}_k(p_1,\ldots,p_r)(c_0)
  &=\gamma_k\; p_{\iota_k}^{{\epsilon}_k}\;  w_{k-1}(p_1,\ldots,p_r)(c_0)\\
  &=\gamma_k\;  p_{\iota_k}^{{\epsilon}_k}\;  \xi_{k-1}\;u_{k-1}(h_1,\ldots,h_r)(c_0)\\
  &=\gamma_k\;  p_{\iota_k}^{{\epsilon}_k}\;  \xi_{k-1}(c_{k-1})\\
  &=\gamma_k\; \xi_{k-1}
    h_{\iota_k}^{{\epsilon}_k}(c_{k-1})\\
    &=\xi_{k}\; 
    h_{\iota_k}^{{\epsilon}_k}(c_{k-1})\\
  &=\xi_k\; u_k(h_1,\ldots,h_r)(c_0),
\end{aligned}
\]
proving~\eqref{eq:inductive-evaluation}. 

We conclude our proof by showing that there is some $c\in V$ such that $$
w(p_1,\ldots,p_r){(c)}=\xi_n u_n(h_1, \dots, h_r)(c)\neq c\; .$$
{By construction of $V$, since $1,\xi_n\in\Sigma$, either $\xi_n(V)\cap V=\emptyset$ or
$\xi_n\mathord\upharpoonright_V=\operatorname{id}_V$. In the first case, pick any $c\in V$. Since $u_n(h_1, \dots, h_r)(c)\in V$, we have that $\xi_nu_n(h_1,\ldots,h_r)(c)\neq c$ as desired. Consider the second case. We have chosen $h_1, \dots, h_r$  so that  $u_n(h_1, \dots, h_r)$ is not the identity on $V$. So, there is some $c\in V$ such that $u_n(h_1, \dots, h_r)(c)\neq c$. Since $\xi_n$ fixes $V$ pointwise, we also have that $\xi_n u_n(h_1, \dots, h_r)(c)\neq c$. This concludes the proof of the theorem.}
\end{proof}
\begin{remark}
    {Inspecting the proof of Theorem~\ref{thm:micro-supported-mixed-identities}, one sees that given any set $N$ belonging to a $G$-invariant ideal of non-dense subsets of $\spOm$ (such as a nowhere dense set), we can even find a tuple $(g_1,\ldots,g_r)\in (G_{N})^r$ violating the equation $w(x_1,\ldots,x_r)=1$.}
\end{remark}

\begin{remark} {We note that several groups studied in topological dynamics have micro-supported actions, and so Theorem~\ref{mainthm:micro} applies to them. Firstly, Rubin actions in the sense of~\cite{belk2025short} are micro-supported, and so several groups of {homeomorphisms, diffeomorphisms,}
and Thompson-like groups have micro-supported actions. Indeed, the notion of micro-supported actions arises from Rubin's work on reconstructing spaces from their groups of symmetries~\cite{rubin1989reconstruction}. In particular, a group has a micro-supported action if and only if it has a locally moving action in Rubin's sense~\cite{rubin1996locally}. Being locally moving is defined in terms of an action of $G$ on a complete atomless Boolean algebra $\mathbf{B}$~\cite[Definition 2.4]{rubin1996locally}. However, it is easy to see that the action of $G$ on the associated Stone space $S(\mathbf{B})$ is micro-supported and that micro-supported actions induce locally moving actions~\cite[Proposition 1.8 (c)]{rubin1996locally}. Thus, the large literature on locally moving groups provides further examples of groups to which Theorem~\ref{mainthm:micro} applies. This includes, for example, the automorphism groups of measure algebras (cf.~\cite{maharam1942homogeneous} and~\cite[p.127]{rubin1996locally}), or the subgroups of the automorphism groups of linear and circular orders considered in~\cite{mccleary2005locally}.}  
\end{remark}